\numberwithin{equation}{section}
\newtheorem{definition}{Definition}[section]
\newtheorem{lemma}[definition]{Lemma}
\newtheorem{theorem}[definition]{Theorem}
\newtheorem{proposition}[definition]{Proposition}
\newtheorem{corollary}[definition]{Corollary}
\newtheorem{remark}[definition]{Remark}
\newcommand{\md}{{\rm d}}
\newcommand{\dx}{\, \md x}
\newcommand{\dy}{\, \md y}
\newcommand{\drho}{\, \md \rho} 
\newcommand{\dphi}{\, \md \phi} 
\newcommand{\dtheta}{\, \md \theta} 
\newcommand{\dz}{\, \md z}
\newcommand{\R}{\mathbb{R}}
\newcommand{\N}{\mathbb{N}}
\newcommand{\C}{\mathbb{C}}
\newcommand{\om}{\Omega}
\newcommand{\cof}{{\rm cof}\,}
\newcommand{\1}{{\bf 1}}
\newcommand{\eps}{\epsilon}
\newcommand{\sca}{\mathcal{A}}
\newcommand{\spt}{\textrm{spt}\,}
\newcommand{\D}{\mathbb{D}}
\newcommand*{\lc}{\mbox{\large$\chi$}}
\newcommand{\re}{\mathrm{Re\,}}
\newcommand{\wto}{\rightharpoonup}
\def\Xint#1{\mathchoice
   {\XXint\displaystyle\textstyle{#1}}%
   {\XXint\textstyle\scriptstyle{#1}}%
   {\XXint\scriptstyle\scriptscriptstyle{#1}}%
   {\XXint\scriptscriptstyle\scriptscriptstyle{#1}}%
   \!\int}
\def\XXint#1#2#3{{\setbox0=\hbox{$#1{#2#3}{\int}$}
     \vcenter{\hbox{$#2#3$}}\kern-.5\wd0}}
\def\dashint{\Xint-}
\newcommand\EEE{\color{black}}
\newcommand{\mk}{\color{black}}
\newcommand{\referee}{\color{black}}
 \def\FS#1#2#3#4 {\scalebox{0.7}{{\begin{tabular}{|c|c|} \hline $#2$ &$#1$ \\ \hline $#3$ & $#4$ \\ \hline \end{tabular}}}}
 \def\INS#1#2#3#4 {\scalebox{0.8}{{\begin{tabular}{|c|c|c|c|} \hline $#1$ &$#2 $ & $#3$ & $#4$   \\ \hline \end{tabular}}}}
\begin{document}

\title[Hadamard's inequality in the mean]
{Hadamard's inequality in the mean}

\author[J. Bevan]
{Jonathan Bevan}
\address[J. Bevan]{Department of Mathematics, University of Surrey, Guildford, GU2 7XH, United Kingdom.}
\email{j.bevan@surrey.ac.uk}

\author[M. Kru\v{z}\'{i}k]
{Martin Kru\v{z}\'{i}k}
\address[M. Kru\v{z}\'{i}k]{ Czech Academy of Sciences, Institute of Information Theory and Automation,
Pod vod\'arenskou v\v{e}\v z\'\i\ 4, 182 00, Prague 8, Czechia.}
\email{kruzik@utia.cas.cz}

\author[J. Valdman] {Jan Valdman} 
\address[J. Valdman]{Czech Academy of Sciences, Institute of Information Theory and Automation, 
Pod vod\'arenskou v\v{e}\v z\'\i\ 4, 182 00, Prague 8, Czechia.
}
\email{jan.valdman@utia.cas.cz}

\subjclass[2020]{49J40, 65K10}

\maketitle

\begin{abstract}  Let $Q$ be a Lipschitz domain in $\R^n$ and let $f \in L^{\infty}(Q)$.  We investigate conditions under which the functional $$I_n(\varphi)=\int_Q |\nabla \varphi|^n+ f(x)\det \nabla \varphi \dx $$ obeys $I_n \geq 0$ for all $\varphi \in W_0^{1,n}(Q,\R^n)$, an inequality that we refer to as Hadamard-in-the-mean, or (HIM). We prove that there are piecewise constant $f$ such that (HIM) holds and is strictly stronger than the best possible inequality that can be derived using the Hadamard inequality $n^{\frac{n}{2}}|\det A|\leq |A|^n$ alone.  When $f$ takes just two values, we find that (HIM) holds if and only if the variation of $f$ in $Q$ is at most $2n^{\frac{n}{2}}$.  For more general $f$, we show that (i) it is both the geometry of the `jump sets' as well as the sizes of the `jumps' that determine whether (HIM) holds and (ii) the variation of $f$ can be made to exceed $2n^{\frac{n}{2}}$, provided $f$ is suitably chosen.  
Specifically, in the planar case $n=2$ we divide $Q$ into three regions $\{f=0\}$ and $\{f=\pm c\}$, and prove that as long as $\{f=0\}$ `insulates' $\{f= c\}$ from $\{f= -c\}$ sufficiently, there is $c>2$ such that (HIM) holds.  Perhaps surprisingly, (HIM) can hold even when the insulation region $\{f=0\}$ enables the sets $\{f=\pm c\}$ to meet in a point.  As part of our analysis, and in the spirit of the work of Mielke and Sprenger \cite{mielke-sprenger}, we give new examples of functions that are quasiconvex at the boundary.

\end{abstract}

\tableofcontents

\maketitle

\section{Introduction}

Let $Q$ be a Lipschitz domain in $\R^n$ and let $f\in L^\infty(Q)$.  This paper concerns the functional
\begin{align}\label{eye}I_n(\varphi)& = \int_Q |\nabla \varphi|^n+ f\det \nabla \varphi \dx \end{align}
defined on $W_0^{1,n}(Q;\R^n)$ for $n\ge 2$. By a Hadamard-in-the-mean inequality, henceforth (HIM), we mean inequality of the form  
\begin{align}\label{him}\tag{HIM}
I_n(\varphi) \geq 0 \quad \quad \forall \ \varphi \in W_0^{1,n}(Q,\R^n).
\end{align}
The fact that (HIM) holds for more general $f$ is already indicated by the observation that $\int_Q\det \nabla \varphi \dx=0$ whenever $\varphi \in W_0^{1,n}(Q,\R^n)$ because the map   $\R^{n\times n}\to\R$: $F\mapsto \det F$ is a so-called null Lagrangian; cf.~e.g.~\cite{dacorogna}. Hence, if $f$ is a constant function then (HIM) always holds. 

The classical pointwise Hadamard inequality, which is easily proved through the use of the arithmetic-geometric mean inequality, is \begin{align}\label{pwhad} c_n|\det A| \leq |A|^n, 
\end{align}
where $c_n:=n^\frac{n}{2}$, $A\in\R^{n\times n}$ and $|A|^2=\sum_{i,j=1}^n A^2_{ij}$. The equality holds for the identity matrix, for instance, i.e., $c_n$ is the largest number  for which \eqref{pwhad} holds.
Using this and the fact that $\int_Q\det \nabla \varphi \dx =0$,  we find that (HIM) holds for those essentially bounded and measurable functions  $f$ which obey 
\begin{align}\label{island1}||f-\bar{f}_{Q}||_\infty \leq  n^{\frac{n}{2}}, \end{align}
where $\bar{f}_{Q}=\dashint_Q f\dx$.
This condition is sharp and, moreover, it is necessary and sufficient for the sequential weak lower semicontinuity of the functional $I_n$  when $f$ is a two-state function, i.e. a function of the form $f=M\chi_{Q'}$ and $Q'\subset Q$; see 
Proposition~\ref{Prop:swlsc}. This is to be contrasted with the fact that if $f\in C(\bar Q)$ then $I_n$ is weakly lower semicontinuous on  $W^{1,n}_0(Q,\R^n)$ (see \cite[Thm.~2.9, Thm.~2.10]{kalamajska-kruzik}).  In the planar case, the necessity of \eqref{island1} relies, in particular, on the results of Mielke and Sprenger \cite{mielke-sprenger}, which characterize the quadratic forms in $\R^{2\times 2}$ that are quasiconvex at the boundary \cite{bama}. In higher dimensions, and in the specific case of the functionals that we consider, we prove in Proposition \ref{prop:n_dim_two_state} a similar result.  These constitute new examples of functions that are quasiconvex at the boundary \cite{bama} in any dimension.  

We note that a ``quartic version'' of (HIM) for $n=2$ and a constant $f$, namely 
\begin{align}\label{qhim}
 \int_Q |\nabla \varphi|^2(|\nabla \varphi|^2+ \gamma\det \nabla \varphi) \dx \geq 0 \quad \quad \forall \ \varphi \in W^{1,4}_0(Q,\R^2)\ ,
\end{align}
is satisfied for every $|\gamma|<2+\epsilon$ if $\epsilon>0$ is small enough, as is shown in \cite{alibert-dacorogna}. 
\referee In a similar vein, and provided one takes $f$ to be the constant $f=n^{\frac{n}{2}}$, in \cite[Cor. 9.1]{iwaniec-lutoborski-ARMA} it is shown that for each $a\geq 0$ and $n \in \N$, there is a constant $\delta=\delta(n,a) \in [0,1)$ such that if $\om \subset \R^n$
\begin{align}\label{mc}
    \int_{\om} |\nabla \varphi|^a (|\nabla \varphi|^n - n^{\frac{n}{2}} \det \nabla \varphi) \dx \geq (1-\delta) \int_{\om} |\nabla \varphi|^{n+a} \dx \quad \forall \varphi \in W^{1,n+a}_0(\om,\R^n).
\end{align}
The authors of \cite{iwaniec-lutoborski-ARMA} refer to this as mean coercivity, and we can link our work to it in the following way.  For example, we show in Proposition \ref{islandproblem} that for two state $f$ of the form $f=M \chi_{\om}$, where $\om \subset Q$ is measurable and satisfies a mild technical condition, the functional 
\begin{align}
    I_n(\varphi):=\label{islandpreview} \int_{Q}|\nabla \varphi|^n + M \chi_{_{\om}}\det \nabla \varphi \dx 
\end{align}
is nonnegative on $W_0^{1,n}(Q;\R^n)$ if and only if $|M| \leq 2c_n$.  It follows that if $M < 2c_n$ then in fact 
\begin{align*}
    I_n(\varphi) \geq \left(1-\frac{|c_n|}{2}\right)\int_Q|\nabla \varphi|^n\dx
\end{align*}
for all $\varphi \in W_0^{1,n}(Q;\R^n)$, which is a form of mean coercivity in which we can even specify the prefactor in the right-hand side.  One can do likewise with the functionals we consider in Section \ref{insulation}.  We also refer the reader to \cite{iwaniec, iwaniec-lutoborski-SIMA}, in particular, for their deep results on mean coercivity and the pivotal role played there by Hadamard's pointwise inequality.

\EEE  One can also view the functional in \eqref{eye} as a general form of an `excess functional' associated with an energy $E$ and a suitably-defined stationary point $u_0$, say, so that 
$$E(u) = E(u_0)+I_n(\varphi),$$
with $\varphi=u-u_0$. This is the situation discussed in  \cite{Bevan-14} and \cite{BD22} where, in both cases, the functional $I_n$ is of the form \eqref{eye}, $f=C\ln(|\cdot|)$, $C$ is constant, and the domain of integration is the unit ball in $\R^2$.  For large enough $C$, \cite[Proposition 3.5 (i)]{Bevan-14} shows that (HIM) fails;  by contrast, it can be deduced from \cite[Theorem 1.2]{BD22} that, for sufficiently small $C$, (HIM) holds.  

If $f$ is positively zero homogeneous, i.e., $f(x)=f(kx)$ for all $x\in Q$ and all $k\ge 0$ then $I_n\ge 0$ on $W^{1,n}_0(Q;\R^n)$ is a necessary condition for sequential weak lower semicontinuity of $I$ on that space. Indeed, 
we can assume that $B(0,1)\subset Q$ where $B(0,1)$ is the unit ball in $\R^n$ centered at the origin.  Then extend  $\varphi\in W^{1,n}_0(B(0,1);\R^n)$ by zero to the whole space.  Note that if we take $\varphi_k(x)=\varphi(kx)$ then get $I_n(\varphi_k)=I_n(\varphi)$ for all $k$, while $\varphi_k \rightharpoonup 0$ still holds  and  we get 
$\nabla \varphi_k\to 0$ in measure.
However, $\lim_{k\to\infty} I_n(\varphi_k)=I_n(\varphi)\ge 0$ by lower semicontinuity. Hence, \eqref{him} can be seen as an inhomogeneous version of (Morrey's) quasiconvexity \cite{morrey}.

Our ultimate goal is to find conditions on $f$ that are both necessary and sufficient for the (HIM) inequality to hold.  While we  consider this task complete in the case of two-state $f$, we are still far from such a characterization in general.  Thus, we study a variety of choices of $f$ for which (HIM) holds and is, in particular, strictly better than the best possible inequality that can be derived using the pointwise Hadamard inequality \eqref{pwhad}
alone.  We find that in these more complex cases, each corresponding to a choice of piecewise continuous $f$,
it is a subtle combination of the `geometry' of the subdomains on which $f$ is constant and the sizes of the jumps themselves that determine whether (HIM) holds. For instance, let $Q=[-1,1]^2$, let $Q_1$ be the first (or positive) quadrant, and let $Q_2,Q_3,Q_4$ be the other quadrants labeled anticlockwise from $Q_1$, see Fig.\ref{pic:intro}.  

\begin{figure}
\centering
\begin{minipage}{1\textwidth}
\centering
\begin{tikzpicture}[scale=2]
\node (A) at (-2,-2) {}; 
\node[right=2 of A.center] (B) {};
\node[right=2 of B.center] (C) {};
\node[right=2 of C.center] (D) {};
\node[above=2 of A.center] (A') {};
\node[above=2 of B.center] (B') {};
\node[above=2 of C.center] (C') {};
\node[above=2 of A'.center] (A'') {};
\node[above=2 of B'.center] (B'') {};
\node[above=2 of C'.center] (C'') {};

\filldraw[thick, top color=gray!30!,bottom color=gray!30!] (A.center) rectangle node{$+\sqrt{8}$} (B'.center);
\filldraw[thick, top color=gray!30!,bottom color=gray!30!] (B'.center) rectangle node{$-\sqrt{8}$} (C''.center);
\filldraw[thick, top color=white,bottom color=white] (A'.center) rectangle node{$0$} (B''.center);
\filldraw[thick, top color=white,bottom color=white] (B.center) rectangle node{$0$} (C'.center);

\end{tikzpicture}
\end{minipage}
\caption{Distribution of $f$ 
yielding $I(\varphi) \geq 0$ for any $\varphi \in W^{1,2}_0(Q;\R^2)$ .} \label{pic:intro}
\end{figure}
\noindent
Then the corresponding functional 
\begin{align*}
I_2(\varphi)=\int_Q |\nabla \varphi|^2 \dx  + \sqrt{8}\int_{Q_3}\det \nabla \varphi \dx - \sqrt{8} \int_{Q_1} \det \nabla \varphi \dx 
\end{align*}
is nonnegative on $W^{1,2}_0(Q;\R^2)$.  See Proposition \ref{foursquares1} for details. 
Such a result is impossible to prove, at least as far as we can tell, using the pointwise Hadamard inequality alone, and the example is optimal in the sense that if $\pm\sqrt{8}$ is replaced by $\pm(\sqrt{8}+\eps)$ for any positive $\eps$ then there are $\varphi$ such that $I_2(\varphi)<0$. 
This and other examples are discussed in more detail in Section \ref{insulation}.  

\subsection{Notation}
For clarity, we use $\lc_S$ to represent the characteristic function of a set $S$.  An $n \times n$ matrix is conformal provided $|A|^n = c_n \det A$, and anticonformal if $|A|^n = -c_n \det A$.  Further, $\mathcal{L}^n$ denotes the $n$-dimensional Lebesgue measure.  All other notation is either standard or else is defined when first used.  \referee The $2\times 2$ matrix representing a rotation $\pi/2$ radians anticlockwise is denoted by $J$.  \EEE

 \section{Preliminary remarks}

At this point, we collect together some basic features of functionals of the form 
\begin{align}\label{iQ}
    I_n(\varphi):=\int_Q |\nabla \varphi|^n+f\det \nabla \varphi \dx, \quad \varphi \in W_0^{1,n}(Q;\R^n), 
\end{align}
where $f \in L^{\infty}(Q)$ and $Q\subset\R^n$ is a bounded Lipschitz domain.  

\begin{proposition}\label{portmanteau} Let $I_n$ be given by \eqref{iQ}.  Then the following are true:
\begin{itemize}
    \item[(i)] $I_n(\lambda \varphi)=\lambda^n I_n(\varphi)$ for any $\lambda>0$, and $I_n(\varphi)$ is finite for every $\varphi \in W_0^{1,n}(Q;\R^n)$.
    \item[(ii)]  If $\varphi^*$ is a stationary point then $I_n(\varphi^*)=0$. 
    \item[(iii)]  \referee In general, the functional $I_n$ is not invariant with respect to changes of domain $Q$.  If $Q'=F(Q)$, where $F$ is conformal, then 
\begin{align}\label{Arnold}I_n(\varphi;Q):=\int_{Q'}|\nabla \Phi(y)|^n+f'(y)\det \nabla \Phi(y) \dy,\end{align}
    where $\Phi:=\varphi\circ F^{-1}$ and $f':=f \circ F^{-1}$. \EEE
    \item[(iv)] If $f$ is constant then $I_n(\varphi)\ge 0$ for every  $\varphi\in W^{1,n}_0(Q;\R^n)$.
    \end{itemize}

\end{proposition}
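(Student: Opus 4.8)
The plan is to dispatch the four items in turn, each reducing to a short computation. For (i): since $\nabla(\lambda\varphi)=\lambda\nabla\varphi$ and, for $n\times n$ matrices, $|\lambda A|^n=\lambda^n|A|^n$ while $\det(\lambda A)=\lambda^n\det A$, both integrands in \eqref{iQ} are multiplied by $\lambda^n$ when $\varphi$ is replaced by $\lambda\varphi$; integrating gives $I_n(\lambda\varphi)=\lambda^nI_n(\varphi)$. Finiteness follows from $\nabla\varphi\in L^n(Q)$ together with the pointwise Hadamard inequality \eqref{pwhad}, which yields $|f\det\nabla\varphi|\le c_n^{-1}\|f\|_{L^\infty(Q)}|\nabla\varphi|^n\in L^1(Q)$. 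For (ii), I would combine (i) with Euler's identity for homogeneous functions: the ray $t\mapsto(1+t)\varphi^*$ stays in $W^{1,n}_0(Q;\R^n)$ for $t>-1$, and by (i) one has $I_n((1+t)\varphi^*)=(1+t)^nI_n(\varphi^*)$, a polynomial in $t$; differentiating at $t=0$ yields $nI_n(\varphi^*)$, while stationarity of $\varphi^*$ forces the first variation of $I_n$ at $\varphi^*$ in the admissible direction $\varphi^*$ to vanish, so $nI_n(\varphi^*)=0$.

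For (iii) I would change variables by $y=F(x)$. With the convention recorded in the Notation subsection that a conformal matrix has positive determinant, one has $\nabla F(x)=\rho(x)R(x)$ with $\rho(x)>0$ and $R(x)\in SO(n)$, so $\det\nabla F(x)=\rho(x)^n$ and the Jacobian of the substitution equals $\rho(x)^n$. Writing $\Phi=\varphi\circ F^{-1}$ and applying the chain rule, $\nabla\Phi(y)=\rho(x)^{-1}\nabla\varphi(x)R(x)^{-1}$ at $x=F^{-1}(y)$. Since right multiplication by $R(x)^{-1}\in O(n)$ preserves the Frobenius norm, $|\nabla\Phi(y)|^n=\rho(x)^{-n}|\nabla\varphi(x)|^n$, and multiplicativity of the determinant gives $\det\nabla\Phi(y)=\rho(x)^{-n}\det\nabla\varphi(x)$. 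In each of the two terms of the integral over $Q'$ the factor $\rho(x)^{-n}$ cancels the Jacobian $\rho(x)^n$, and with $f'=f\circ F^{-1}$ the integrals over $Q'$ collapse termwise onto the corresponding integrals over $Q$; this is exactly \eqref{Arnold}. One should also check $\Phi\in W^{1,n}_0(Q';\R^n)$, which holds because $F$ is a smooth diffeomorphism with $F(\partial Q)=\partial Q'$, so composition with $F^{-1}$ preserves both the $W^{1,n}$ regularity and the vanishing boundary trace.

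For (iv): when $f$ is constant, $\int_Qf\det\nabla\varphi\dx=f\int_Q\det\nabla\varphi\dx=0$ because $F\mapsto\det F$ is a null Lagrangian and $\varphi\in W^{1,n}_0(Q;\R^n)$, as recalled in the introduction; hence $I_n(\varphi)=\int_Q|\nabla\varphi|^n\dx\ge0$. The only step that is more than a one-line verification is (iii): it relies on the rigidity of conformal maps (Liouville's theorem for $n\ge3$, elementary for $n=2$) to obtain the pointwise factorization $\nabla F=\rho R$, and on justifying both the change of variables and the composition formula for the Sobolev — not merely smooth — map $\varphi$; this is routine since $F$ is bi-Lipschitz (indeed smooth) on $\overline Q$, but it is where the argument carries any content.
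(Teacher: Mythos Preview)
Your proof is correct and follows essentially the same route as the paper. For (ii) the paper writes out the weak Euler--Lagrange equation explicitly, tests with $\zeta=\varphi^*$ (after noting that density of $C_c^\infty$ in $W_0^{1,n}$ permits this), and uses the identity $\cof\nabla\varphi^*\cdot\nabla\varphi^*=n\det\nabla\varphi^*$; your homogeneity argument via $I_n((1+t)\varphi^*)=(1+t)^nI_n(\varphi^*)$ is a clean way to bypass that identity, but it still rests on the same density step to make $\varphi^*$ itself an admissible test direction. One small remark on (iii): the pointwise factorization $\nabla F=\rho R$ is just linear algebra for conformal matrices (equivalently, the paper's $\nabla F^T\nabla F=(\det\nabla F)\,\1$), and does not require Liouville's theorem --- that result classifies which maps are conformal in $\R^n$, $n\ge3$, not the structure of $\nabla F$ once conformality is assumed.
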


\begin{proof}Part (i) is clear. \\
To see (ii), suppose $\varphi^* \in W_0^{1,n}(Q;\R^n)$ is a stationary point of the functional, so that 
\begin{align}\label{naiveEL}\int_Q n|\nabla \varphi^*|^{n-2}\nabla \varphi^* \cdot \nabla \zeta +f \,  \cof \nabla \varphi^* \cdot \nabla \zeta \dx = 0, \quad \zeta \in C_c^\infty(Q;\R^n) . \end{align}
Since $C_c^\infty(Q)$ is dense in  $W_0^{1,n}(Q;\R^n)$, a straightforward approximation argument shows that the weak form \eqref{naiveEL} holds for $\zeta$ in $W_0^{1,n}(Q;\R^n)$.  Now take  $\zeta=\varphi^*$ and apply the identity $\cof \nabla \varphi^* \cdot \nabla \varphi^* = n \det \nabla \varphi^*$, so that \eqref{naiveEL} becomes $nI_n(\varphi^*)=0$. Hence (ii).  \\
For (iii), \eqref{Arnold} follows by making the prescribed change of variables and bearing in mind that $\nabla F^T \nabla F = (\det \nabla F) \1$ when $F$ is a conformal map. It should now be clear that changing the domain changes $f$.

Finally, (iv) follows from the fact that $\int_Q\det\nabla\varphi\dx=0$, cf.~\cite{dacorogna}, for instance.
\end{proof}

We remark that (i) or  (ii) imply in particular that if the functional $I_n$ attains a minimum in $W_0^{1,n}(Q;\R^n)$ then the value of the minimum is zero and (HIM) is automatic.  Note, however, that the functional is not necessarily `coercive enough' to guarantee that a minimum is attained.  This is one of the reasons why (HIM) is interesting:  we obtain results that are consistent with but do not depend upon the coercivity just mentioned.  

We now derive a necessary condition on $f$ for the functional $I_n$ to be nonnegative.  Let $O(n)$ be the set of rotations and reflections in $\R^{n\times n}$.  Let $\varphi\in W^{1,n}_0(\Omega;\R^n)$ satisfy $\nabla\varphi\in O(n)$ almost everywhere in $\Omega\subset Q$ where $\Omega\subset\R^n$ is a Lipschitz domain. The existence of such $\varphi$ follows from \cite[p.~199 and Rem.2.4]{dacorogna-marcellini}.
Let $\Omega^\pm=\{x\in \Omega: \det\nabla\varphi(x)=\pm 1\}$ and observe that, since $\det \nabla \varphi$ is a null Lagrangian, it must hold that  $\mathcal{L}^n(\Omega^-)=\mathcal{L}^n(\Omega^+)=\mathcal{L}^n(\Omega)/2$.
Hence 
\begin{align*}
I_n(\varphi)&=n^{n/2}\mathcal{L}^n(\Omega)+\int_{\Omega^+}f(x) \dx-\int_{\Omega^-}f(x) \dx
\end{align*} 
rearranges to
\begin{align}\label{4+}
I_n(\varphi)=\frac{\mathcal{L}^n(\Omega)}{2}\left(2n^{n/2}+\dashint_{\Omega^+}f(x)\dx -\dashint_{\Omega^-}f(x)\dx \right),\end{align}
which thereby forms a necessary condition for $I_n\ge 0$. Here, $\dashint$ denotes the mean value of the integrand on the respective set.  Adapting this idea leads to the following result.

\begin{proposition}
Let $I_n$ be given by \eqref{iQ} and assume that $I_n \geq 0$ on $W^{1,n}_0(Q;\R^n)$. Let $\om \subset Q$ be a Lipschitz domain and let $\varphi\in W^{1,n}_0(\Omega;\R^n)$ satisfy $\nabla\varphi\in O(n)$ almost everywhere in $\Omega$.
Then
it is necessary that  
\begin{align}\label{meanest}
    \left|\dashint_{\om^+}f(x) \dx-\dashint_{\om^-}f(x) \dx \right| & \leq 2n^{n/2}.
\end{align}
\end{proposition}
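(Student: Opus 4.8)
The plan is to combine the identity already derived in \eqref{4+} with a simple reflection trick that supplies a second admissible test map whose roles of $\om^+$ and $\om^-$ are interchanged, so that a two-sided estimate — and hence the absolute value in \eqref{meanest} — results.

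First I would fix the given $\varphi$ and set $\om^{\pm}=\{x\in\om:\det\nabla\varphi(x)=\pm1\}$. Extending $\varphi$ by zero outside $\om$ yields an element of $W^{1,n}_0(Q;\R^n)$, and since both $|\nabla\varphi|^n$ and $\det\nabla\varphi$ vanish on $Q\setminus\om$ we have $I_n(\varphi)=\int_{\om}|\nabla\varphi|^n+f\det\nabla\varphi\dx$. Because $\nabla\varphi\in O(n)$ a.e., $|\nabla\varphi|^2=\tr(\nabla\varphi^{T}\nabla\varphi)=n$ and $\det\nabla\varphi=\pm1$ a.e., so $\int_{\om}|\nabla\varphi|^n\dx=n^{n/2}\mathcal{L}^n(\om)$; and since $\det\nabla\varphi$ is a null Lagrangian, $\int_\om\det\nabla\varphi\dx=0$ forces $\mathcal{L}^n(\om^+)=\mathcal{L}^n(\om^-)=\mathcal{L}^n(\om)/2$. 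Substituting these facts reproduces \eqref{4+} with $\om$ in place of $\Omega$, and the hypothesis $I_n\geq 0$ together with $\mathcal{L}^n(\om)>0$ then gives
\[
2n^{n/2}+\dashint_{\om^+}f\dx-\dashint_{\om^-}f\dx\ \geq\ 0 .
\]

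To obtain the reverse inequality I would fix $R\in O(n)$ with $\det R=-1$ and apply the previous step to $\psi:=R\varphi$ in place of $\varphi$. Then $\psi\in W^{1,n}_0(\om;\R^n)$ (its trace on $\partial\om$ is $R$ times that of $\varphi$, hence zero), $\nabla\psi=R\nabla\varphi\in O(n)$ a.e.\ since a product of orthogonal matrices is orthogonal, $|\nabla\psi|=|\nabla\varphi|$ pointwise, and $\det\nabla\psi=(\det R)\det\nabla\varphi=-\det\nabla\varphi$, so that $\psi^+=\om^-$ and $\psi^-=\om^+$. Running the same computation for $\psi$ and using $I_n(\psi)\geq 0$ yields
\[
2n^{n/2}+\dashint_{\om^-}f\dx-\dashint_{\om^+}f\dx\ \geq\ 0 ,
\]
and the two displayed inequalities together are exactly \eqref{meanest}.

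The step that needs the most care — and the reason the naive choice $-\varphi$ for the second competitor fails — is the orientation bookkeeping: $\det(-\nabla\varphi)=(-1)^n\det\nabla\varphi$, so negation interchanges $\om^+$ and $\om^-$ only when $n$ is odd, whereas postcomposition with an orientation-reversing isometry does so in every dimension while leaving $|\nabla\varphi|$ and membership in $W^{1,n}_0(\om;\R^n)$ untouched. Everything else is routine verification: that the zero extensions are admissible on $Q$, that $Q\setminus\om$ contributes nothing, and that the null-Lagrangian property equidistributes $\mathcal{L}^n$ between $\om^+$ and $\om^-$.
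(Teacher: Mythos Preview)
Your proof is correct and follows essentially the same approach as the paper. The paper uses the specific reflection $\phi:=(-\varphi_1,\varphi_2,\ldots,\varphi_n)$, which is precisely your $R\varphi$ with $R=\mathrm{diag}(-1,1,\ldots,1)$; your remark that the naive choice $-\varphi$ fails in even dimensions is a nice clarification not spelled out in the paper.
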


\begin{proof}
The argument above shows that 
\begin{align}\label{astor}I_n(\varphi)=\frac{\mathcal{L}^n(\om)}{2}\left(2n^{n/2}-\dashint_{\om^+} f(x) \dx +\dashint_{\om^-} f(x) \dx \right).\end{align}
Replacing $\varphi$ by $\phi:=(-\varphi_1,\varphi_2,\ldots)$, we find that the roles of $\om^+$ and $\om^-$ are exchanged, i.e.
$$\om^{\pm}=\{x \in \om: \ \det \nabla \phi = \mp 1\},$$
which leads directly to
\begin{align}\label{piazolla}
I_n(\phi)=\frac{\mathcal{L}^n(\om)}{2}\left(2n^{n/2}-\dashint_{\om^-} f(x) \dx +\dashint_{\om^+} f(x) \dx \right).\end{align}
Putting \eqref{astor} and \eqref{piazolla} together gives \eqref{meanest}.
\end{proof}

\section{Two-state $f$}\label{Sec: two-state f}
In this section, we assume that $f$ takes only two values in $Q:=(-1,1)^n$, i.e., that $f=M\chi_\Omega$, where $M\in\R$,   $\Omega$ is a strict   subset of $Q$ that is open, and $\chi_\Omega$ is the characteristic function of $\Omega$ in $Q$.  Our aim is to show that, under a relatively mild regularity assumption on $\partial \om$, the functional 
\begin{align}\label{inomega}I_n(\varphi):=\int_{Q}|\nabla \varphi|^n+M\chi_{\Omega}\det \nabla \varphi \, dx \end{align}
is nonnegative on $W_0^{1,n}(Q,\R^n)$ if and only if $|M|\leq 2c_n$, where $c_n$ is the constant appearing in the pointwise Hadamard inequality \eqref{pwhad}. 
In fact, to show that $I_n\ge 0$ on $W_0^{1,n}(Q,\R^n)$ if $|M|\leq 2c_n$ and $\Omega$ is measurable is very easy because  
\begin{align}
I_n(\varphi)&=I_n(\varphi)-\underbrace{\frac M2\int_Q\det\nabla\varphi\dx}_{=0} \nonumber\\
&=
\int_\Omega |\nabla\varphi|^n +\frac M2\det\nabla\varphi\dx
+ \int_{Q\setminus\Omega} |\nabla\varphi|^n -\frac M2\det\nabla\varphi\dx
\end{align}
and both terms on the right-hand side are nonnegative in view of the pointwise Hadamard inequality.
The regularity assumption enables us, via a blow-up argument, to prove that the sequential weak lower semicontinuity of $I_n$ is equivalent to that of the special functional
\begin{align}\label{inplus}I_n^+(\varphi):=\int_{Q} |\nabla \varphi|^n + M \chi_{Q^+}\det \nabla \varphi \, dx,
\end{align}
which, in turn, holds if and only if $I_n^+\geq 0$ on $W_0^{1,n}(Q,\R^n)$, i.e. if and only if a particular form of (HIM) holds. Here, $Q^+:=\{x\in Q: x_n\ge 0\},$ and it is worth emphasising that the prefactor $M$ is exactly the same in both \eqref{inomega} and \eqref{inplus}.  Thus, by these steps, the nonnegativity of $I_n$ on $W_0^{1,n}(Q;\R^n)$ is equivalent to the nonnegativity of $I_n^+$ on the same space.  We shall see in Proposition \ref{inplusnonnegative} below that in fact
$$I_n^+(\varphi) \geq 0 \quad \forall \varphi \in W_0^{1,n}(\om,\R^n) \iff |M|\leq 2c_n,$$
a result whose proof serves to connect our work with the literature on quasiconvexity at the boundary \cite{bama}.    

It is convenient to divide the present section into two:  Subsection \ref{threepointone} deals exclusively with the nonnegativity of the special functional $I_n^+$, while  Subsection~\ref{threepointtwo} focuses on  $I_n$. 

\subsection{The nonnegativity of $I_n^+$}\label{threepointone}

The following result was proved by Mielke and Sprenger in  \cite[Thm.~5.1]{mielke-sprenger} for $n=2$.  Proposition~\ref{prop:n_dim_two_state} extends this result to any $n\ge 2$. If   $\Gamma$ represents the plane $\{x \in \R^n: \ x_n =0\}$ we  define 
\begin{align}
    W^{1,n}_{\Gamma}(Q^+,\R^n)=\{\phi\in W^{1,n}(Q^+;\R^n):\, \phi=0\, \text{ on } \partial Q^+\setminus\Gamma\}.
\end{align}

\begin{proposition}\label{prop:n_dim_two_state}

Let $n\ge 2$ and let  the functional $I_n^{++}$ be given by 
\begin{align}\label{n_dimensional_I+}
I_n^{++}(\varphi) :=\int_{Q^+} |\nabla \varphi|^n + c \det \nabla \varphi \dx. 
\end{align}
   Then $I^{++}(\varphi) \geq 0$ for all $\varphi \in W^{1,n}_{\Gamma}(Q^+,\R^n)$ if and only if $|c|\leq c_n$.
\end{proposition}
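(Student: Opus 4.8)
The plan is to treat the two directions separately, with the "if" direction ($|c|\le c_n \Rightarrow I_n^{++}\ge 0$) being essentially trivial and the "only if" direction being the substantive content. For the easy direction, note that $\det\nabla\varphi$ is a null Lagrangian, but here $\varphi$ does not vanish on all of $\partial Q^+$, only on $\partial Q^+\setminus\Gamma$, so $\int_{Q^+}\det\nabla\varphi\dx$ need not be zero. Nonetheless, the pointwise Hadamard inequality \eqref{pwhad} gives $|\nabla\varphi|^n + c\det\nabla\varphi \ge |\nabla\varphi|^n - |c||\det\nabla\varphi| \ge (1-|c|/c_n)|\nabla\varphi|^n \ge 0$ pointwise whenever $|c|\le c_n$, and integrating yields $I_n^{++}(\varphi)\ge 0$. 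So the work is all in showing that $|c|>c_n$ forces $I_n^{++}$ to take a negative value.

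For the "only if" direction, I would argue by contradiction: suppose $|c|>c_n$; by replacing $\varphi$ with $(-\varphi_1,\varphi_2,\dots,\varphi_n)$ if necessary (which flips the sign of $\det\nabla\varphi$), we may assume $c>c_n$ and seek $\varphi$ with $\int_{Q^+}|\nabla\varphi|^n + c\det\nabla\varphi\dx<0$. The natural construction is a boundary layer concentrated near $\Gamma$: take $\varphi$ supported in a thin slab $\{0\le x_n<\delta\}$, essentially independent of the tangential variables away from the lateral boundary, and chosen so that $\nabla\varphi$ is close to a fixed conformal matrix $R$ with $\det R=1$ (so $|R|^n = c_n\det R = c_n$) on a large-measure portion of the slab. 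On such a portion the integrand is approximately $c_n + c\cdot 1 <0$... wait, that is $c_n+c>0$; one instead wants $\det\nabla\varphi=-1$ there, i.e. $\nabla\varphi$ close to an \emph{anticonformal} matrix, giving integrand $\approx c_n - c<0$. The subtlety — and this is why the constraint is only on $\partial Q^+\setminus\Gamma$ — is that a map on $Q^+$ vanishing on all of $\partial Q^+$ would have $\int\det\nabla\varphi=0$, killing any hope of a sign; leaving $\Gamma$ free is exactly what permits a net negative average of $\det\nabla\varphi$. Concretely, I expect the construction to take the form $\varphi(x) = \psi(x_n)\,w(x')$ or a laminate/Vitali-covering patchwork of affine pieces, with the non-conformal "core" balanced by thin transition layers whose contribution to $\int|\nabla\varphi|^n$ is made negligible by scaling.

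The main obstacle is producing, for $c_n < c$, an admissible $\varphi\in W^{1,n}_\Gamma(Q^+;\R^n)$ realising $I_n^{++}(\varphi)<0$ while controlling the Dirichlet-type term $\int_{Q^+}|\nabla\varphi|^n$; in $n>2$ one cannot rely on the conformal/anticonformal decomposition of $2\times 2$ matrices used by Mielke–Sprenger, so I would instead build $\varphi$ from an explicit anticonformal affine map on a large inner region, glued to zero through a boundary corrector supported in a shrinking neighbourhood of $\partial Q^+\setminus\Gamma$. One clean way: fix the anticonformal linear map $A_0 x$ on $Q^+$, mollify/cutoff it to vanish on $\partial Q^+\setminus\Gamma$ using a cutoff $\eta$ equal to $1$ outside an $\eps$-tube, and estimate $I_n^{++}(\eta A_0 x + \text{correction})$; as $\eps\to 0$ the error terms are $O(\eps)$ while the bulk term tends to $\mathcal L^n(Q^+)(c_n - c)<0$. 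One then verifies that the resulting competitor indeed lies in $W^{1,n}_\Gamma(Q^+;\R^n)$ — the free boundary $\Gamma$ is where $A_0 x$ need not vanish, so no correction is needed there, which is consistent. Care is needed because $A_0x$ restricted to $\Gamma$ must be tangential-compatible with the slab geometry; I would choose $A_0$ so that $A_0 e_n$ points into $Q^+$, or simply argue on a half-ball rather than a half-cube to avoid corner issues, transferring back by Proposition \ref{portmanteau}(iii) since the relevant maps and rescalings are conformal.
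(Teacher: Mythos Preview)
Your treatment of the ``if'' direction is correct and matches the paper's.

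The ``only if'' direction, however, has a genuine gap. Your proposed competitor is $\varphi=\eta\,A_0x$ with $A_0$ anticonformal and $\eta$ a cutoff vanishing on $\partial Q^+\setminus\Gamma$, and you assert that the cutoff error is $O(\eps)$. This is false. On the $\eps$-tube where $\eta$ transitions, $|\nabla\eta|\sim\eps^{-1}$ while $|A_0x|\sim 1$ (the lateral and top faces of $Q^+$ are at distance $\sim 1$ from the origin), so $|\nabla\varphi|\sim\eps^{-1}$ there and
\[
\int_{\text{tube}}|\nabla\varphi|^n\dx \;\sim\; \eps\cdot\eps^{-n}\;=\;\eps^{1-n}\longrightarrow\infty
\]
as $\eps\to 0$. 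The positive Dirichlet term therefore swamps the $O(1)$ negative bulk contribution $(c_n-c)\,\mathcal L^n(Q^+)$, and $I_n^{++}(\varphi)\to+\infty$ rather than becoming negative. The same obstruction kills the laminate idea you mention: $A_0$ and $0$ are not rank-one connected (since $A_0$ is invertible), so there is no cheap transition layer.

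The paper avoids this by using a map whose gradient is \emph{pointwise} anticonformal everywhere, not merely on a bulk region, and which decays at infinity so that the cutoff is inexpensive. Concretely, they take
\[
\varphi^0(x)=A\,\frac{x-a}{|x-a|^2},\qquad a=-\delta e_n,
\]
an inversion centred just below $\Gamma$; with $A$ conformal one computes $\nabla\varphi^0=\frac{A}{|x-a|^2}(\1-2\nu\otimes\nu)$, which is anticonformal since $\1-2\nu\otimes\nu$ is a reflection. Thus $|\nabla\varphi^0|^n+c\det\nabla\varphi^0=-(c/c_n-1)|\nabla\varphi^0|^n$ pointwise. The singularity at $a$ sits in the lower half-space, so on $H^+$ the negative energy $\int_{B(0,R_0)\cap H^+}|\nabla\varphi^0|^n$ blows up like $\delta^{-n}$ as $\delta\to 0$, while the radial cutoff at scale $R_0$ costs only $O(R_0^{-n})$ because $\varphi^0$ decays there. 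Rescaling then places the competitor in $W^{1,n}_\Gamma(Q^+;\R^n)$. The essential idea you are missing is that one needs a competitor with \emph{concentrating} anticonformal gradient near the free boundary $\Gamma$, not a bounded-gradient affine map that must be brutally cut off at distance $\sim 1$.
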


\begin{proof} That the stated condition is sufficient for the nonnegativity of $I_n^{++}$ is a direct consequence of inequality \eqref{pwhad}.  The `only if' part of the proposition can be proved by contraposition, as follows.  Firstly, by exchanging the first component $\varphi^0_1$ with $-\varphi^0_1$ if necessary, we may assume that $c\geq 0$.  Then we suppose that $c > c_n$, and find $\Phi \in W^{1,n}_{\Gamma}(Q^+,\R^n)$ such that $I_n^{++}(\Phi)<0$ as follows.  

Let $H^{\pm}=\{x \in \R^n: \pm x_n > 0\}$.  Let $e_n$ be the $n^{\textrm{th}}$ canonical basis vector, let $\delta>0$ and set $a=-\delta e_n$. Define for $x \in H^+$ the function 
$$\varphi^0(x)=A\frac{x-a}{|x-a|^2},$$
where $A$ is a fixed $n \times n$ matrix to be chosen shortly.  Then 
\begin{align*}
    \nabla \varphi^0(x) = \frac{A}{|x-a|^2}(\1 - 2 \nu(x) \otimes \nu(x))
\end{align*}
provided $\nu(x)=\frac{x-a}{|x-a|}$.  The matrix $P:=(\1 - 2 \nu(x) \otimes \nu(x))$ obeys $P^2=\1$ and $\det P=-1$.   Choosing $A$ so that it belongs to the set $CO_{+}(n)$ of conformal matrices
gives 
\begin{align*}
- c_n \det \nabla \varphi^0 =  |\nabla \varphi^{0}|^n,     
\end{align*}
and we see that the integrand of $I_n^+(\varphi^0)$ obeys
\begin{align*}
F(\nabla \varphi^0) & = |\nabla \varphi^0|^n + c \det \nabla \varphi^0 \\
& = -\left(\frac{c}{c_n}-1\right) |\nabla \varphi^0|^n, 
\end{align*}
which, other than the fact that $\varphi^0$ fails to have compact support in $Q^+$, would suffice to complete the proof.  The remaining step remedies this using a cut-off argument and by allowing the singularity at $a=-\delta e_n$ to approach the boundary $\Gamma$ of $H^+$.  

Let $R_0>\delta$ be constant, and let $\eta: \ [0,\infty) \to [0,1]$ be a smooth cut-off function such that $\eta(s) = 1$ for $0 \leq s \leq R_0$, $\eta(s)=0$ for $s\geq 2R_0$ and $|\eta'(s)| \leq \frac{1}{R_0}$.  Let $\varphi(x)=\eta(|x|) \varphi^0(x)$ and note that $\nabla \varphi(x) = \eta(|x|) \nabla \varphi^0(x) + \eta'(|x|) \varphi^0(x) \otimes \frac{x}{|x|}$ for $x \in H^+$.  
In $B(0,R_0)$,  $\varphi$ coincides with $\varphi^0$, so 
\begin{align*}
    \int_{B(0,R_0) \cap H^+} F(\nabla \varphi) \dx & =  -\left(\frac{c}{c_n}-1\right) \int_{B(0,R_0) \cap H^+} |\nabla \varphi^0|^2 \dx \\& = -\left(\frac{c}{c_n}-1\right) \int_{B(0,R_0) \cap H^+} \frac{|A|^n}{|x-a|^{2n}} \dx.
\end{align*}
Now 
\begin{align*}
    \int_{B(0,R_0) \cap H^+} \frac{dx}{|x-a|^{2n}} & 
    \geq C \int_0^{R_0} \frac{\rho^{n-1}}{(\rho+\delta)^{2n}} \drho \\
    &= C \int_{\delta}^{R_0}  \left(\frac{\rho}{\rho+\delta}\right)^{n-1} \frac{1}{(\rho+\delta)^{n+1}} \, d\rho \\
      & \geq 
    2^{1-n} C \int_\delta^{R_0} \frac{1}{(\rho+\delta)^{n+1}} \drho \\
    & = \frac{2^{1-2n}C}{n}\left(\delta^{-n} - 2^n(R_0 +\delta)^{-n}\right),
\end{align*}
where $C$ is a constant depending only on the angular part of the integral, and is hence dependent only on $n$.  Let $\omega=(B_{2R}\setminus B_R) \cap H^+$.  The other contributor to $I^+(\varphi)$ is such that, as long as $R_0 \geq 2 \delta$, say, then
\begin{align*}
    \left|\int_{\omega} F(\nabla \varphi) \dx \right| \dx & \leq C'\int_{\om} |\nabla \varphi|^n \dx \\
    & \leq C' \int_{\omega} |\nabla \varphi^0|^n + \frac{|\varphi^0|^n}{R_0^n} \dx
    \\ & \leq C'\left( \int_{\omega} \frac{dx}{|x-a|^{2n}} + \int_{\omega}\frac{\dx}{R_0^n |x-a|^{n}}   \right) \\
    & \leq C'\left((R_0-\delta)^{-n}-(2R_0-\delta)^{-n}\right),
    \end{align*}
where $C'$ is a constant depending only on $n$ and which may change from line to line.  

Hence, for positive constants $a_1$ and $a_2$ depending only on $n$, we have 
\begin{align*}
    I_n^{++}(\varphi) \leq -\left(\frac{c}{c_n}-1\right)a_1 \left(\delta^{-n} -2^n(R_0+\delta)^{-n}\right) + a_2 \left((R_0-\delta)^{-n} - (2R_0 -\delta)^{-n}\right),
\end{align*}
the right-hand side of which is, for fixed $c>c_n$, suitably small $\delta$ and suitably large $R_0$, negative.  The function
$$\Phi(x):=\frac{1}{2R_0} \, \varphi(2R_0 x)$$
belongs to $W^{1,n}_{\Gamma}(Q^+,\R^n)$ and it obeys $I^{++}(\Phi)<0$.
\end{proof}
Hence, taking $F\in\R^{n\times n}$,  $F\mapsto |F|^n+c\det F$ is quasiconvex at the boundary for any normal if and only if $|c|\le c_n$, i.e., only if  it is pointwise nonnegative. 
This extends the result from \cite{mielke-sprenger} to any dimension.

We can now prove the main result of this subsection.

\begin{proposition}
\label{inplusnonnegative}
The functional $I_n^+$ given by \eqref{inplus}  is nonnegative on $W^{1,n}_0(Q;\R^n)$ if and only if $|M|\le 2c_n$. 
\end{proposition}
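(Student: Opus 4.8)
The plan is to prove the two implications separately, the forward one being immediate and essentially already in the text. If $|M|\le 2c_n$ then, since $F\mapsto\det F$ is a null Lagrangian, one subtracts $\tfrac M2\int_Q\det\nabla\varphi\dx=0$ and rewrites
\begin{align*}
I_n^+(\varphi)=\int_{Q^+}\Big(|\nabla\varphi|^n+\tfrac M2\det\nabla\varphi\Big)\dx+\int_{Q\setminus Q^+}\Big(|\nabla\varphi|^n-\tfrac M2\det\nabla\varphi\Big)\dx,
\end{align*}
and both integrands are pointwise nonnegative by \eqref{pwhad} because $|M/2|\le c_n$. So the real content is the converse.

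For the converse I would argue by contraposition: assuming $|M|>2c_n$, I will construct $\Phi\in W^{1,n}_0(Q;\R^n)$ with $I_n^+(\Phi)<0$. Replacing $\varphi=(\varphi_1,\dots,\varphi_n)$ by $(-\varphi_1,\varphi_2,\dots,\varphi_n)$ preserves $|\nabla\varphi|^n$ and reverses the sign of $\det\nabla\varphi$, so I may assume $M>2c_n$. The idea is to take the anticonformal building block from the proof of Proposition \ref{prop:n_dim_two_state} on the upper half-space and reflect it into the lower half-space. Concretely, with $a=-\delta e_n$ and $A\in CO_+(n)$, set $\varphi^0(x)=A\frac{x-a}{|x-a|^2}$; recall that $c_n\det\nabla\varphi^0=-|\nabla\varphi^0|^n$ throughout $H^+$ and that the only singularity of $\varphi^0$, at $a$, lies in $H^-$. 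Let $S=\mathrm{diag}(1,\dots,1,-1)$ be the reflection across $\Gamma$ and define $\psi\colon\R^n\to\R^n$ by $\psi=\varphi^0$ on $\overline{H^+}$ and $\psi(x)=\varphi^0(Sx)$ for $x\in H^-$. Since $Sx\in H^+$ carries no singularity and the two formulas agree on $\Gamma$, this $\psi$ is Lipschitz on bounded sets (with $|\nabla\psi|\lesssim\delta^{-2}$ away from the origin), hence in $W^{1,n}_{\mathrm{loc}}(\R^n;\R^n)$; and because $S$ is orthogonal, $|\nabla\psi(x)|=|\nabla\varphi^0(Sx)|$, so $\int_{H^-}|\nabla\psi|^n\dx=\int_{H^+}|\nabla\varphi^0|^n\dx=:E$.

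The key computation is then that, formally,
\begin{align*}
\int_{\R^n}|\nabla\psi|^n+M\chi_{H^+}\det\nabla\psi\dx=\Big(1-\tfrac{M}{c_n}\Big)E+E=\Big(2-\tfrac{M}{c_n}\Big)E<0,
\end{align*}
the first term coming from anticonformality on $H^+$ and the second from the reflection identity; it is exactly here that the threshold $2c_n$ appears, via $(1-\tfrac M{c_n})+1$. To turn this into a genuine competitor I would repeat the cut-off and rescaling already used in the proof of Proposition \ref{prop:n_dim_two_state}: with $\eta$ the radial cut-off there and $\psi_R:=\eta(|x|)\psi$, the part over $B_{R_0}$ is $(2-\tfrac M{c_n})\int_{B_{R_0}\cap H^+}|\nabla\varphi^0|^n\dx\le-(\tfrac M{c_n}-2)\,a_1\delta^{-n}$ for small $\delta$, using the lower bound on that integral already established there, while the annular part over $B_{2R_0}\setminus B_{R_0}$ — now coming from both sides of $\Gamma$, but estimated by the same bounds — is $O(R_0^{-n})$; fixing $R_0$ and sending $\delta\to0$ makes the total negative. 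Finally $\Phi(x):=\tfrac1{2R_0}\psi_R(2R_0 x)$ is supported in a ball compactly contained in $Q$, hence lies in $W^{1,n}_0(Q;\R^n)$, and the dilation-covariance of the $|\nabla\varphi|^n$ term together with the $0$-homogeneity of $\chi_{Q^+}=\chi_{H^+}$ gives $I_n^+(\Phi)=(2R_0)^{-n}\big(\int_{\R^n}|\nabla\psi_R|^n+M\chi_{H^+}\det\nabla\psi_R\dx\big)<0$.

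The main obstacle is not conceptual — the reflection ``doubling'' of the Mielke--Sprenger-type map and the clean balance that pins the critical jump at $2c_n$ are short — but bookkeeping: one must check that the leading negative term of order $\delta^{-n}$ genuinely dominates the $O(R_0^{-n})$ cut-off error and that the two half-space contributions combine with the correct signs. Since these estimates are essentially copies of those in the proof of Proposition \ref{prop:n_dim_two_state}, I expect this step to be routine rather than hard, so the emphasis of the write-up should be on the reflection construction and the identity $\int_{H^-}|\nabla\psi|^n\dx=\int_{H^+}|\nabla\varphi^0|^n\dx$.
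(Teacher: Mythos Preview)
Your argument is correct and is, at the level of ideas, the same as the paper's: both exploit the reflection $\psi(x)=\varphi(Sx)$ across $\Gamma$ to double the half-space construction, and it is exactly this reflection that turns the critical constant $c_n$ from Proposition~\ref{prop:n_dim_two_state} into $2c_n$.

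The difference is one of packaging. You reflect the \emph{specific} anticonformal map $\varphi^0$ and then redo the cut-off/blow-up estimates from the proof of Proposition~\ref{prop:n_dim_two_state} in the reflected setting. The paper instead reflects an \emph{arbitrary} $\varphi\in W^{1,n}_{\Gamma}(Q^+;\R^n)$ and proves, via the elementary identities $\int_{Q^+}|\nabla\psi|^n=\int_{Q\setminus Q^+}|\nabla\psi|^n$ and $\int_{Q^+}\det\nabla\psi=-\int_{Q\setminus Q^+}\det\nabla\psi$, the exact formula
\[
I_n^+(\psi)=2\Big(\int_{Q^+}|\nabla\varphi|^n+\tfrac{M}{2}\det\nabla\varphi\,\dx\Big)=2\,I_n^{++}(\varphi)\big|_{c=M/2}.
\]
This reduces the necessity of $|M|\le 2c_n$ directly to the \emph{statement} of Proposition~\ref{prop:n_dim_two_state}, with no need to revisit its proof. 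Your route reaches the same conclusion but carries the cut-off bookkeeping a second time; the paper's factoring is shorter and cleanly separates the two propositions.
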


\begin{proof}

 Given \referee  $\varphi\in W_{\Gamma}^{1,n}(Q^+;\R^n)$ \EEE let $\psi\in W^{1,n}_0(Q;\R^n)$  be defined as follows:
 \begin{align}\label{reflection}
   \psi(x)=\begin{cases}
   \varphi(x) &\text{ if } x\in Q^+\ ,\\
   \varphi(x_1,x_2,\ldots, -x_n) & \text{ if } x\in Q\setminus Q^+\ .
   \end{cases}
 \end{align}
  Notice that 
\begin{eqnarray*}
&&\int_{Q^+}|\nabla\psi(x)|^n \dx=\int_{Q\setminus Q^+}|\nabla\psi(x)|^n \dx , \\
&&\int_{Q^+}\det\nabla\psi(x) \dx=-\int_{Q\setminus Q^+}\det\nabla\psi(x) \dx , \\
&&\int_{Q}\det\nabla\psi(x) \dx=0.
\end{eqnarray*}
 We calculate 
 \referee
 \begin{multline*}
I_n^+(\psi)=\int_{Q}|\nabla\psi|^n \dx +M\int_{Q^+}\det\nabla\psi \dx \\
= \int_{Q^+}  |\nabla\psi|^n \dx +\frac{M}{2}  \int_{Q^+}\det\nabla\psi \dx +\int_{Q\setminus Q^+} |\nabla\psi|^n \dx -\frac{M}{2}  \int_{Q\setminus Q^+}\det\nabla\psi \dx\\
=2\Big( \int_{Q^+}|\nabla\varphi|^n \dx +\frac{M}{2}  \int_{Q^+}\det\nabla\varphi \dx\Big).
\end{multline*}
\EEE
 It follows from  Proposition~\ref{prop:n_dim_two_state},  however, that for every $M\in\R$ such that $|M|>2c_n$ there is 
 $
 \varphi_M\in W_{\Gamma}^{1,n}(Q^+;\R^n)$  satisfying 
  $$
  \int_{Q^+}|\nabla\varphi_M|^n \dx +\frac{M}{2}  \int_{Q^+}\det\nabla\varphi_M \dx<0, $$
 i.e., $I_n^+(\psi_M)<0$ with $\psi_M$ defined by means of   $\varphi_M$   in  \eqref{reflection}. Sufficiency was already shown.
 \end{proof}
 
 \begin{remark}\label{rem:qcb}
 The set $Q^+$  is called the standard boundary domain in \cite{bama}. The integrand of  $I_n^{++}$ is  quasiconvex at the boundary at zero  with respect to the normal $-e_n=(0,\ldots, -1)$ if and only if $|M|\le c_n$. Quasiconvexity at the boundary forms a necessary condition for minimizers in elasticity. In fact, this property does not depend on the integration domain and $Q^+$ can be replaced by  a half-ball $D_{-e_n}=\{x\in B(0,1):\, x_n>0\}$, where $B(0,1) $ is the unit ball centred at the origin  defined by  the Frobenius norm, for instance. The set $W_{\Gamma}^{1,n}(Q^+;\R^n)$ must then be replaced by 
 $$\{\varphi\in W^{1,n}(B;\R^n):\, \varphi=0 \text{ on } \partial D_{-e_n}\setminus\{x_n=0\}\}. $$
 We refer to \cite{bama, mielke-sprenger} for details. 
 It is easy to see that the normal $-e_n$ above can be replaced by any other unit vector which just means that the function  
 $F\mapsto |F|^n + M\det F$ is quasiconvex at the boundary at zero  with respect to any normal if and only if $|M|\le c_n$.
 \end{remark}

\subsection{Nonnegativity of $I_n$}\label{threepointtwo}
In this subsection, we assume  that $\Omega\subset Q$ is an open  set such that there is a point
$x_0\in\partial\Omega\setminus\partial Q$ at which the unit outer normal  to 
$\partial\Omega$ exists.

\begin{proposition}\label{islandproblem}
Let $I_n$ be as in \eqref{inomega}. If $|M|>2c_n$ then there is $\varphi\in W^{1,n}_0(Q;\R^n)$ such that $I_n(\varphi)<0$.  \referee In particular, $I_n(\varphi) \geq 0$ for all $\varphi \in W_0^{1,n}(Q;\R^n)$ if and only if $|M| \leq 2c_n$. \EEE
\end{proposition}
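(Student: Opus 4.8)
The plan is as follows. The implication ``$|M|\le 2c_n\Rightarrow I_n\ge0$'' of the ``in particular'' claim has already been obtained above, by splitting $I_n$ over $\Omega$ and $Q\setminus\Omega$ and invoking the pointwise Hadamard inequality \eqref{pwhad}. So it suffices to prove the first assertion: if $|M|>2c_n$, there is $\varphi\in W_0^{1,n}(Q;\R^n)$ with $I_n(\varphi)<0$. The idea is a blow-up at the point $x_0\in\partial\Omega\setminus\partial Q$ at which the unit outer normal $\nu:=\nu(x_0)$ to $\partial\Omega$ exists: on small scales $\Omega$ is indistinguishable from the half-space $H_\nu^-:=\{y\in\R^n:\,y\cdot\nu<0\}$ on whose side it locally lies, so a concentrated, rescaled test function sees $I_n$ to leading order as the half-space functional $I_n^+$ of \eqref{inplus}, which by Proposition~\ref{inplusnonnegative} takes negative values precisely when $|M|>2c_n$.

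Assume $|M|>2c_n$. By Proposition~\ref{inplusnonnegative} there is $\psi\in W_0^{1,n}(Q;\R^n)$ with $I_n^+(\psi)=\int_Q|\nabla\psi|^n\dx+M\int_{Q^+}\det\nabla\psi\dx<0$; inspecting the constructions behind Propositions~\ref{prop:n_dim_two_state} and \ref{inplusnonnegative}, $\psi$ may be taken with compact support in $Q$. Choose $R\in SO(n)$ with $Re_n=-\nu$ (possible for every $n\ge2$) and set $\tilde\psi:=\psi\circ R^T$, so that $|\nabla\tilde\psi|=|\nabla\psi|\circ R^T$ and, since $\det R=1$, $\det\nabla\tilde\psi=(\det\nabla\psi)\circ R^T$, with $\spt\tilde\psi=R(\spt\psi)$ compact in $R(Q)$. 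For small $r>0$ define $\varphi_r(x):=r\,\tilde\psi\big((x-x_0)/r\big)$ on $x_0+rR(Q)$ and $\varphi_r:=0$ elsewhere in $Q$; since $x_0$ is an interior point of $Q$, $\varphi_r\in W_0^{1,n}(Q;\R^n)$ once $r$ is small. The change of variables $x=x_0+ry$ gives $r^{-n}I_n(\varphi_r)=\int_{\R^n}|\nabla\tilde\psi(y)|^n\dy+M\int_{\R^n}\chi_{(\Omega-x_0)/r}(y)\,\det\nabla\tilde\psi(y)\dy$, where $(\Omega-x_0)/r:=\{(x-x_0)/r:\,x\in\Omega\}$.

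It remains to pass to the limit $r\to0$. Under the stated hypothesis the blow-ups satisfy $\chi_{(\Omega-x_0)/r}\to\chi_{H_\nu^-}$ in $L^1_{\mathrm{loc}}(\R^n)$. Since $\nabla\tilde\psi\in L^n$ has compact support, Hadamard's inequality \eqref{pwhad} gives $\det\nabla\tilde\psi\in L^1(\R^n)$, and a routine measure-theoretic argument — with $|\det\nabla\tilde\psi|\in L^1$ as an $r$-independent majorant — yields $\int_{\R^n}\chi_{(\Omega-x_0)/r}\det\nabla\tilde\psi\dy\to\int_{H_\nu^-}\det\nabla\tilde\psi\dy$. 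Undoing the rotation (note $R^T\nu=-e_n$, so $\chi_{H_\nu^-}\circ R=\chi_{\{y_n>0\}}$) identifies the limit: $\lim_{r\to0}r^{-n}I_n(\varphi_r)=\int_Q|\nabla\psi|^n\dx+M\int_{Q^+}\det\nabla\psi\dx=I_n^+(\psi)<0$. Hence $I_n(\varphi_r)<0$ for all small enough $r>0$, which proves the first assertion; together with the sufficiency recalled at the start, this gives $I_n\ge0$ on $W_0^{1,n}(Q;\R^n)$ if and only if $|M|\le2c_n$.

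The step I expect to require the most care is making the blow-up rigorous from the bare hypothesis: one must fix precisely what ``the unit outer normal to $\partial\Omega$ exists at $x_0$'' is taken to mean (e.g.\ that, after a rotation, $\partial\Omega$ is near $x_0$ the graph of a function differentiable at the corresponding point, or the measure-theoretic/De Giorgi condition) and deduce from it the $L^1_{\mathrm{loc}}$ convergence of the rescaled indicators, and then carry out the limit in the determinant term whose integrand lies only in $L^1$, not $L^\infty$. The other ingredients — the scaling identity, the choice of $R$, and the admissibility $\varphi_r\in W_0^{1,n}(Q;\R^n)$ — are routine.
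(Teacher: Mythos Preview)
Your argument is correct and follows essentially the same blow-up strategy as the paper: concentrate a test function at $x_0\in\partial\Omega\setminus\partial Q$, rescale, and observe that the functional sees in the limit only the half-space structure, which is handled by Proposition~\ref{inplusnonnegative}. The only cosmetic difference is that you introduce an explicit rotation $R\in SO(n)$ to reduce to the standard half-cube $Q^+$ and invoke Proposition~\ref{inplusnonnegative} directly, whereas the paper works instead with a half-ball $D_\nu=\{x\in B(0,r):x\cdot\nu>0\}$ for the actual normal $\nu$ and then appeals to Remark~\ref{rem:qcb}, which records that quasiconvexity at the boundary is independent of whether one uses a cube or a ball as the standard boundary domain. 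Your version is a touch more explicit about the change of variables; the paper's is slightly terser but leans on that remark. Your closing paragraph correctly identifies the only genuinely delicate point, namely the $L^1_{\mathrm{loc}}$ convergence of the rescaled indicators, which both proofs take as a consequence of the existence of the outer normal at $x_0$.
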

\begin{proof}
We  assume without loss of generality  that $\nu\in\R^n$  is  the outer unit normal to $\partial\Omega$ at $x_0=0$.  Consider $r>0$ so small that $B(0,r)\subset Q$, take $\psi\in W^{1,n}_0(B(0,r);\R^n)$, extend it by zero to the whole $\R^n$ and define 
$\varphi_k(x)=\psi(kx)$ for every $k\in\N$. Moreover, let $D_\nu=\{x\in B(0,r): x\cdot\nu>0\}$.
\begin{align}\label{Jn}
I_n(\varphi_k)&=\int_Q|\nabla \varphi_k|^n\, \dx +M\int_{\Omega}\det\nabla \varphi_k\dx\nonumber\\
&= \int_{B(0,r)} k^n|\nabla\psi(kx)|^n\dx+ Mk^n\int_{\Omega\cap B(0,r)}\det\nabla \psi(kx)\dx\nonumber\\
&=\int_{B(0,r)}|\nabla \psi|^n\, \dx +M\int_{\{kx: x\in \Omega\cap B(0,r)\}}\det\nabla \psi\dx\nonumber\\
&\to\int_{B(0,r)}|\nabla \psi|^n\, \dx +M\int_{D_\nu}\det\nabla \varphi\dx.
\end{align}

If $|M|>2c_n$ we can find $\psi\in W^{1,n}_0(B(0,r);\R^n)$ such that the last term is negative; cf.~Remark~\ref{rem:qcb}.   This implies that there is $\varphi_k\in W^{1,n}(Q;\R^n)$ such that 
$I_n(\varphi_k)<0$.
\end{proof}

\section{Insulation and point-contact problems}\label{insulation}
\renewcommand\thefigure{\thesection.\arabic{figure}}

The purpose of this section is to demonstrate concretely that, in the planar case $n=2$, there are functionals of the form
$$I(\varphi)=\int_{\om}|\nabla \varphi|^2 + f(x) \det \nabla \varphi(x) \dx $$
for which (HIM) holds and where the total variation in $f$
$$\delta f:={\rm ess}\sup\{|f(x)-f(y)|:\ x,y \in \om\}$$
obeys $\delta f >4$.  When $\delta f \leq 4$, (HIM) holds straightforwardly, so the arguments in Sections \ref{insuproper} and \ref{-4040} are necessarily much more delicate.  We infer from both examples that there is a subtle interplay between the geometry of the domain, here understood to be the regions on which $f$ is constant, and the values taken by $f$ there.  We distinguish the problems we consider by the manner in which the regions where $f$ is constant interact: in a pure `insulation problem' the regions $f^{-1}(\pm c)$ lie at a positive distance from one another and are separated by $f^{-1}(0)$, whereas in a point-contact insulation problem the regions $f^{-1}(\pm c)$ meet in a point and are otherwise separate.

Section \ref{insuproper} concerns the pure insulation problem, in which we divide a rectangular domain $\om$ into three adjacent regions $\om_l,\om_m$ and $\om_r$, say, and set
\begin{align}\label{threestate} f:=-c\chi_{\om_l} + c \chi_{\om_r},
\end{align}
where $c$ is constant.  In order that $\delta f>4$ we must take $c>2$, but, as we know from Section~\ref{Sec: two-state f}, a jump of $2c>4$ would then be incompatible with (HIM) were the regions $\om_l$ and $\om_r$ to share a common boundary.  Thus the role of the middle region $\om_m$ is to `insulate' $\om_l$ from $\om_r$, and the force of Proposition \ref{sum:4+} is that, for the particular choice of $\om$, $\om_l$, $\om_m$ and $\om_r$ described there, there are $c>2$ such that (HIM) holds.  We also remark in Proposition \ref{thinmiddle} that, given a `three-state' $f$ as described in \eqref{threestate}, the `insulation strip' cannot be arbitrarily small.  

Section \ref{-4040} describes in detail a point-contact insulation problem.   We produce a three-state $f$ with $\delta f=2\sqrt{8}$ and for which (HIM) holds. 
The result is optimal in that if the geometry of the sets $f^{-1}(0)$ and $f^{-1}(\pm c)$ is retained but the constant c is chosen larger in modulus than $\sqrt{8}$, then the associated functional can be made negative and (HIM) fails. 
See Section  \ref{-4040} for details. 

\subsection{The pure insulation problem}\label{insuproper}


We now focus on the domain 
$$\om:=R_{-2} \cup R_{-1} \cup R_{1} \cup R_2$$ 
formed of four rectangles arranged in a row, as shown in Figure \ref{pic:pureinsulation}.  
\begin{figure}[H]
\centering
\begin{minipage}{0.69\textwidth}
\centering
\begin{tikzpicture}[scale=0.8]
\node (A) at (-4,-2) {}; 
\node[right=2 of A.center] (B) {};
\node[right=4 of A.center] (O) {};
\node[right=4 of B.center] (C) {};
\node[right=2 of C.center] (D) {};

\node[above=4 of A.center] (A') {};
\node[above=4 of B.center] (B') {};
\node[above=4 of O.center] (O') {};
\node[above=4 of C.center] (C') {};
\node[above=4 of D.center] (D') {};

\node[right=2 of B.center] (BC) {};
\node[right=2 of B'.center] (B'C') {};

\filldraw[thick, top color=gray!30!,bottom color=gray!30!] (A.center) rectangle node{$-c$} (B'.center);
\filldraw[thick, top color=gray!30!,bottom color=gray!30!] (C.center) rectangle node{$+c$} (D'.center);

\node (R1) at ($(B)!0.5!(B'C')$) {0};    
\node (R-1) at ($(BC)!0.5!(C')$) {0};    

\node [below right = 0.1 of A'.center] {$R_{-2}$};
\node [below right = 0.1 of B'.center] {$R_{-1}$};
\node [below right = 0.1 of O'.center] {$R_{1}$};
\node [below right = 0.1 of C'.center] {$R_{2}$};

\draw[thick] (B.center) -- (C.center);
\draw[thick] (B'.center) -- (C'.center);
\draw[thick] (BC.center) -- (B'C'.center);
\end{tikzpicture}
\end{minipage}
\begin{minipage}{0.29\textwidth}
\begin{align*}
R_{-2}  &=[\scalebox{0.9}{$-1,-\frac{1}{2}$}]\times [\scalebox{0.9}{$-\frac{1}{2  }$},\scalebox{0.9}{$\frac{1}{2}$}], \\
     R_{-1}  &=[-\scalebox{0.9}{$\frac{1}{2}$},0]\times [\scalebox{0.9}{$-\frac{1}{2  }$},\scalebox{0.9}{$\frac{1}{2}$}], \\
     R_1  &=[0,\scalebox{0.9}{$\frac{1}{2}$}]\times [\scalebox{0.9}{$-\frac{1}{2}$},\scalebox{0.9}{$\frac{1}{2}$}], \\
     R_2  &=[\scalebox{0.9}{$\frac{1}{2}$},1]\times [\scalebox{0.9}{$-\frac{1}{2}$},\scalebox{0.9}{$\frac{1}{2}$}].
\end{align*}
\end{minipage}
\caption{Distribution of rectangles. 
}  \label{pic:pureinsulation}
\end{figure}
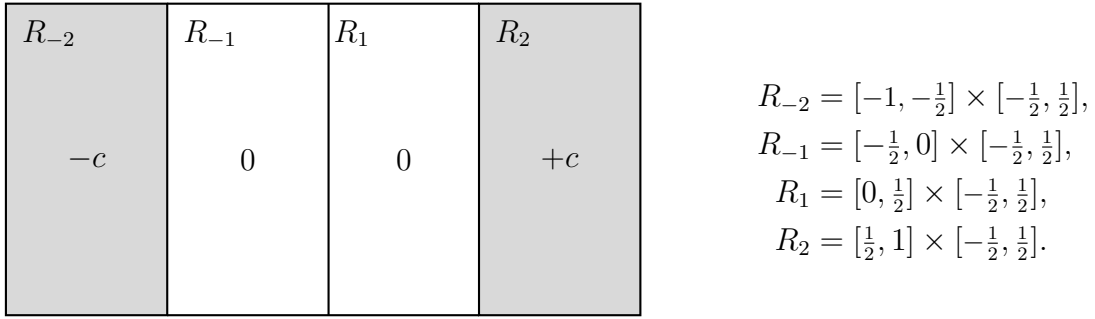

In the notation introduced above, we therefore have
\begin{align*}
    \om_l  =R_{-2}, \quad
    \om_m  =R_{-1}\cup R_1, \quad
    \om_r  = R_2,
\end{align*}
with the middle region $\om_m$ playing the role of an `insulating' layer. 

Let $c>0$ be constant, define the piecewise constant function $f$ by
\begin{align}\label{f_four_square} f(x):=-c\chi_{_{R_{-2}}} + c \chi_{_{R_{2}}}, \end{align}
and form the functional
\begin{align}\label{I_four_square} I(\varphi,\INS{-c}{0}{0}{c} \,)& := \int_{\om} |\nabla \varphi|^2+f(x) \det \nabla \varphi(x) \dx. \\ & = 
\nonumber \int_{\om}|\nabla \varphi|^2 \dx + c\int_{R_2} \det \nabla \varphi \dx - c \int_{R_{-2}} \det \nabla \varphi \dx.
\end{align}

It is useful to write the functional $I(\varphi,\INS{-c}{0}{0}{c} \,)$ in terms of the odd and even parts of $\varphi$, which we now do.

\begin{lemma} Let $\varphi$ belong to $W^{1,2}_0(\om,\R^2)$ and let $f$ and $g$ be, respectively, the even and odd parts of $\varphi$.  Then 
\begin{multline}\label{corpus}  \frac{1}{2}I(\varphi,\INS{-c }{ 0 }{ 0 }{ c } \,)  \\ =  \int_{R_1} |\nabla f|^2 + |\nabla g|^2 \dx +    \int_{R_2} |\nabla g|^2 + |\nabla f|^2 + c   \, \cof\nabla f \cdot \nabla g \, \dx.
\end{multline}
In particular, if we set
\begin{align*}
\Gamma_0: =\{\scalebox{0.9}{$0$}\}\times [-1/2,1/2], \quad
\Gamma_1: =\{\scalebox{0.9}{$\frac{1}{2}$}\}\times [-1/2,1/2],
\end{align*} 
with $f_0=f\arrowvert_{_{\Gamma_0}}$, 
$f_1=f\arrowvert_{_{\Gamma_1}}$, then the estimate
\begin{align}\label{AcesUp}
 \frac{1}{2}I(\varphi,\INS{-c}{0}{ 0}{c} \,) \geq \int_{R_1} |\nabla F_1|^2 \dx +  \left(1-\frac{c^2}{4}\right) \int_{R_2} |\nabla F_2|^2 \dx
\end{align}
holds, where each $F_j$ is harmonic on $R_j$ and obeys $F_j\arrowvert_{_{\Gamma_1}}=f_1$ for $j=1,2$.  
\end{lemma}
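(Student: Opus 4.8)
\emph{Proof strategy.} The key is to use the mirror symmetry of $\om$ about the line $\{x_1=0\}$, noting that the coefficient $-c\chi_{R_{-2}}+c\chi_{R_{2}}$ is \emph{odd} under $\sigma(x_1,x_2):=(-x_1,x_2)$ while $\om$ itself is invariant. Decompose $\varphi=f+g$, with $f(x):=\half(\varphi(x)+\varphi(\sigma x))$ even and $g(x):=\half(\varphi(x)-\varphi(\sigma x))$ odd in $x_1$; both lie in $W^{1,2}(\om;\R^2)$, and $g$ vanishes on $\Gamma_0$. The chain rule fixes the parities of all first derivatives ($\partial_1 f$, $\partial_2 g$ odd and $\partial_2 f$, $\partial_1 g$ even in $x_1$), whence $|\nabla f|^2$, $|\nabla g|^2$ and $\cof\nabla f\cdot\nabla g$ are even in $x_1$ while $\nabla f\cdot\nabla g$, $\det\nabla f$ and $\det\nabla g$ are odd. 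Folding each integral over $\om$ onto $R_1\cup R_2$ therefore kills the quadratic cross-term and gives $\half\int_\om|\nabla\varphi|^2\dx=\int_{R_1\cup R_2}(|\nabla f|^2+|\nabla g|^2)\dx$, which supplies the quadratic terms of \eqref{corpus}.

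For the determinant contribution I would expand $\det\nabla\varphi=\det\nabla f+\det\nabla g+\cof\nabla f\cdot\nabla g$ on each rectangle. Since $\varphi\in W^{1,2}_0(\om;\R^2)$ one has $\int_\om\det\nabla\varphi\dx=0$, and applying $\sigma$ to this null-Lagrangian identity further yields $\int_{R_1\cup R_2}\cof\nabla f\cdot\nabla g\dx=0$. Inserting the parities above together with these two identities into $c\int_{R_2}\det\nabla\varphi\dx-c\int_{R_{-2}}\det\nabla\varphi\dx$ should collapse the determinant part of $I$ to the single $R_2$-integral $c\int_{R_2}\cof\nabla f\cdot\nabla g\dx$ appearing in \eqref{corpus}; halving then gives \eqref{corpus}. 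I expect this bookkeeping to be the main obstacle, as one has to be scrupulous about signs and orientations; the cleanest implementation is probably to write both $\det\nabla u=\partial_1(u_1\partial_2 u_2)-\partial_2(u_1\partial_1 u_2)$ and $\cof\nabla f\cdot\nabla g$ in divergence form and exploit that $f$ and $g$ vanish on those parts of $\partial R_1$ and $\partial R_2$ which lie on $\partial\om$, so that every remaining term becomes a one-dimensional integral over $\Gamma_0$ or $\Gamma_1$ and the cancellations are explicit.

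To obtain the estimate \eqref{AcesUp} from \eqref{corpus} I would argue pointwise on the two rectangles. On $R_1$, discard the nonnegative term $|\nabla g|^2$. On $R_2$, since $|\cof A|=|A|$ for $A\in\R^{2\times2}$, completing the square in $\nabla g$ gives the algebraic identity
\[
|\nabla f|^2+|\nabla g|^2+c\,\cof\nabla f\cdot\nabla g=\Bigl|\nabla g+\frac{c}{2}\cof\nabla f\Bigr|^2+\Bigl(1-\frac{c^2}{4}\Bigr)|\nabla f|^2\ \ge\ \Bigl(1-\frac{c^2}{4}\Bigr)|\nabla f|^2 ,
\]
valid for every $c$. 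Hence $\half I(\varphi)\ge\int_{R_1}|\nabla f|^2\dx+\bigl(1-\frac{c^2}{4}\bigr)\int_{R_2}|\nabla f|^2\dx$. Finally, for $j\in\{1,2\}$ let $F_j$ be the harmonic extension to $R_j$ of the trace $f|_{\partial R_j}$; then $\int_{R_j}|\nabla f|^2\dx\ge\int_{R_j}|\nabla F_j|^2\dx$ by Dirichlet's principle, while $F_j|_{\Gamma_1}=f|_{\Gamma_1}=f_1$ because $\Gamma_1\subset\partial R_j$. Substituting these two estimates into the previous display yields exactly \eqref{AcesUp}.
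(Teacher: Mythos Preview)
Your overall architecture is right, but the specific reflection you chose is the wrong one, and this derails the computation of the determinant term.

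You take $\sigma(x_1,x_2)=(-x_1,x_2)$. With this $\sigma$ your parity claims are correct: $\det\nabla f$ and $\det\nabla g$ are \emph{odd}, while $\cof\nabla f\cdot\nabla g$ is \emph{even}. But then the odd coefficient $c\chi_{R_2}-c\chi_{R_{-2}}$ picks out the \emph{odd} part of $\det\nabla\varphi=\det\nabla f+\det\nabla g+\cof\nabla f\cdot\nabla g$, so
\[
c\!\int_{R_2}\!\det\nabla\varphi\dx-c\!\int_{R_{-2}}\!\det\nabla\varphi\dx
=2c\!\int_{R_2}\!\bigl(\det\nabla f+\det\nabla g\bigr)\dx,
\]
not $2c\int_{R_2}\cof\nabla f\cdot\nabla g\dx$. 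Your two auxiliary null-Lagrangian identities do not convert one into the other; they only tell you that $\int_{R_1\cup R_2}\cof\nabla f\cdot\nabla g=0$, which is a statement about the even part and does nothing for the surviving odd terms on $R_2$. So with this $\sigma$ you do not reach \eqref{corpus}, and the expression you do obtain, $|\nabla f|^2+|\nabla g|^2+c(\det\nabla f+\det\nabla g)$, does not admit the clean completing-the-square step you write afterwards: the best pointwise bound is $(1-|c|/2)(|\nabla f|^2+|\nabla g|^2)$, which is useless for $|c|>2$.

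The fix is to use the \emph{point} reflection $x\mapsto -x$, which the domain and the coefficient also respect. Then $\nabla f(-x)=-\nabla f(x)$ and $\nabla g(-x)=\nabla g(x)$ as full matrices, so $\det\nabla f$ and $\det\nabla g$ are \emph{even} (degree two in an odd matrix) while $\cof\nabla f\cdot\nabla g$ is \emph{odd}. The difference $\int_{R_2}-\int_{R_{-2}}$ then kills $\det\nabla f+\det\nabla g$ and leaves exactly $2\int_{R_2}\cof\nabla f\cdot\nabla g$, giving \eqref{corpus} immediately with no extra identities needed. After that, your pointwise inequality on $R_2$ and the Dirichlet-principle step are correct and match the paper's argument.
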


\begin{proof} 
By approximating a general $\varphi$ in $W^{1,2}_0(Q,\R^2)$ with smooth, compactly supported maps, we may assume in the following that $\varphi$ belongs to $C_c^{\infty}(Q;\R^2)$.  In particular, the traces $f_0=\varphi_{_{\Gamma_0}}$ and $f_1=\varphi_{_{\Gamma_1}}$ are well defined.  Let $\phi_1$ be the harmonic extension of $\varphi_{_{\partial R_1}}$, with a similar definition for $\phi_2$. Then
\begin{align*}
\int_{R_1}|\nabla \varphi|^2 \dx & \geq \int_{R_1}|\nabla \phi_1|^2 \dx     \end{align*}
and 
\begin{align*}
\int_{R_2}|\nabla \varphi|^2 \dx & \geq \int_{R_2}|\nabla \phi_2|^2 \dx.     \end{align*}
Noting that $\det \nabla \varphi$ is a null Lagrangian, we also have 
\begin{align*}
    \int_{R_j} \det \nabla \varphi \dx = \int_{R_j} \det \nabla \phi_j \dx \quad \quad j=1,2.
\end{align*}
Hence, 
\begin{align*}\int_{R_1\cup R_2}|\nabla \varphi|^2 + c\chi_{_{R_2}}\det \nabla \varphi \dx & \geq \int_{R_1}|\nabla \phi_1|^2 \dx + \int_{R_2}|\nabla \phi_2|^2 +c \, \det \nabla \phi_2 \dx. \end{align*}
It follows that, in bounding the functional $I(\varphi,\INS{-c}{0}{ 0}{c} \,)$ below,  we may assume without loss of generality that $\varphi$ is harmonic on each of $R_1$ and $R_2$.  Henceforth, we replace $\phi_1$ and $\phi_2$ with $\varphi$ and assume that the last of these is harmonic in each of $R_1$ and $R_2$.

To prove \eqref{corpus}, write  $\varphi=f+g$ and use the facts that $\nabla f$ and $\nabla g$ are, respectively, odd and even functions, to see that
\begin{align}\label{dir:let}
\int_{\om} |\nabla \varphi|^2 \dx = 2\int_{R_1}|\nabla f|^2+|\nabla g|^2 \dx +2\int_{R_2}|\nabla f|^2+|\nabla g|^2 \dx .\end{align}
Making use of the expansion $$\det \nabla \varphi= \det \nabla f + \det \nabla g + \cof \nabla f \cdot \nabla g,$$
the determinant terms can be handled similarly, giving
\begin{align}\label{det:erm}
\int_{R_2}\det \nabla \varphi \dx - \int_{R_{-2}} \det \nabla \varphi \dx = 2 \int_{R_2} \cof \nabla f \cdot \nabla g \dx.
\end{align}
To obtain \eqref{corpus}, combine \eqref{dir:let} and \eqref{det:erm}.  

Now, the rightmost integrand of \eqref{corpus} obeys, pointwise\footnote{For example, consider the function $h(A,B):=|A|^2+|B|^2 +c \, \cof A \cdot B$ defined on pairs of $2\times 2$ matrices, freeze $A$, say, and minimize $h(A,B)$ over $B$, which is easily done since $B \mapsto h(A,B)$ is strongly convex.} 
\begin{align*}
 |\nabla f|^2+|\nabla g|^2 + c \,  \cof \nabla f \cdot \nabla g & \geq \left(1-\frac{c^2}{4}\right)|\nabla f|^2,    
\end{align*}
from which the estimate
\begin{align}\label{cristi}  \frac{1}{2}I(\varphi,\INS{-c }{ 0 }{ 0 }{ c } \,)  \geq  \int_{R_1} |\nabla f|^2 + |\nabla g|^2 \dx +\left(1-\frac{c^2}{4}\right)\int_{R_2}|\nabla f|^2\dx.
\end{align}
is obtained.  Finally, \eqref{AcesUp} follows by dropping the term $\int_{R_1}|\nabla g|^2\dx$ and by adopting the notation $F_j\arrowvert_{_{\Gamma_1}}=f_1$ for $j=1,2$.  
\end{proof}

The two Dirichlet energies appearing in \eqref{AcesUp} are linked by the condition that 
\begin{align*}
F_j \arrowvert_{\Gamma_1}=f_1 \quad \quad j=1,2,
\end{align*} and we claim that this link is enough to make the two quantities comparable in the sense that 
\begin{align*} \int_{R_1} |\nabla F_1|^2 \dx \geq \gamma \int_{R_2} |\nabla F_2|^2 \dx
\end{align*}
for some $\gamma>0$ that is independent of $f_1$.  To prove the claim we begin by setting out two auxiliary Dirichlet problems on the unit ball $B(0,1) \subset \R^2$ as follows.   Suppose that $h: [0,\pi] \to \R^2$ is such that
\begin{itemize}\item[(H1)] $h$ has compact support in the interval $\omega:=[\pi/4,3\pi/4]$ and $h$ belongs to $H^1([0,\pi])$ and 
\item[(H2)] $h(\theta)=h(\pi-\theta)$ for $0 \leq \theta \leq \pi$.  
\end{itemize}

Define $h^{\pm} \in H^1(\R;\R^2)$ by 
\begin{align}\label{hpm} h^{\pm}(\theta)=\left\{ \begin{array}{l l} \ \  h(\theta) & \textrm{if } 0 \leq \theta \leq \pi \\
\pm h(\theta-\pi) & \textrm{if } \pi \leq \theta \leq 2\pi, \end{array}\right.
\end{align}
extended periodically to $\R$.  We define the lift $\tilde{h}^{\pm}: \mathbb{S}^1 \to \R^2$ via 
\begin{align*}
\tilde{h}^{\pm}(x):=h^{\pm}(\theta) \quad \quad x=(\cos \theta, \sin \theta).
\end{align*}

\vspace{2mm}
\noindent \textbf{Problem $\oplus$:}  Minimize $\D(w):=\int_B |\nabla w|^2 \dx$ in 
$\{w \in H^1(B;\R^2): \ w\arrowvert_{\partial B}= \tilde{h}^+\}.$

\vspace{2mm}
\noindent \textbf{Problem $\ominus$:}  Minimize $\D(z):=\int_B |\nabla z|^2 \dx$ in 
$\{z \in H^1(B;\R^2): \ z\arrowvert_{\partial B} = \tilde{h}^-\}.$

\vspace{2mm}
Let $W$ and $Z$ solve problems $\oplus$ and $\ominus$ respectively.  By an application of Schwarz's formula, we can express the components of $W$ and $Z$ in the form 
\begin{align*} W_j(z) & =\re(2H_j^+(z)-H_j^+(0)), \\
Z_j(z) & = \re(2H_j^{-}(z)-H_j^-(0))
\end{align*}
for $j=1,2$, where 
\begin{align*}H^{\pm}(z):=
\frac{1}{2\pi}\int_0^{2\pi} \frac{h^{\pm}(\theta)}{1-z e^{-i\theta}} \dtheta. 
\end{align*}
Letting 
$$h^{+}(\theta)=\frac{a^+_0}{2}+\sum_{k=1}^\infty a^+_k \cos(k \theta) + b^+_k \sin(k \theta)$$
and 
$$h^{-}(\theta)=\sum_{k=1}^\infty a^-_k \cos(k \theta) + b^-_k \sin(k \theta)$$
be the Fourier series representations of $h^+$ and $h^-$ respectively, a direct calculation shows that 
$$W(R,\theta)=\frac{a_0^+}{2}+\sum_{k=1}^{\infty} (a^+_k \cos(k \theta) + b^+_k \sin(k \theta))R^k,$$
and similarly for $Z$.  The Dirichlet energies $\D(W,B)$ and $\D(Z,B)$ are then
\begin{align}\label{whalf} \D(W,B) & = \pi \sum_{k=1}^\infty k({|a^+_k|}^2+{|b^+_k|}^2) \\
\label{zhalf} \D(Z,B) & =  \pi \sum_{k=1}^\infty k({|a^-_k|}^2+{|b^-_k|}^2),
\end{align}
which we recognise as being proportional to the (squared) $H^{1/2}-$norms of $\tilde{h}^+$ and $\tilde{h}^-$ respectively.  The following lemma can be deduced from its continuous Fourier transform counterpart (see, for instance, \cite[Prop. 3.4]{NPV2012} in the case that $s=\frac{1}{2}$.) We include the proof for completeness.

\begin{lemma} \label{hitchlem}Let $h(\theta)$ and its $2\pi-$periodic extension to $\R$ be represented by the Fourier series 
\begin{align*} h(\theta) = \frac{A_0}{2}+\sum_{k=1}^\infty A_k\cos(k \theta) + B_k \sin( k \theta) \quad \quad \quad 0 \leq \theta \leq 2 \pi.
\end{align*}
 Then provided the right-hand side of \eqref{thehitch} is finite,
\begin{align}\label{thehitch}[h]_{_{\frac{1}{2}}}^2:= \int_{0}^{2\pi} \int_{\R} \frac{|h(\theta)-h(\phi)|^2}{(\theta-\phi)^2} \dtheta \dphi = 2 \pi^2 \sum_{k=1} k(A_k^2+B_k^2).
\end{align}
\end{lemma}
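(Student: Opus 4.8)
The plan is to express the seminorm $[h]_{1/2}^2$ in terms of Fourier coefficients by direct computation, using the classical identity for the periodic $H^{1/2}$-seminorm. First I would substitute the Fourier expansion of $h$ into the double integral defining $[h]_{1/2}^2$ and, after a change of variables $\phi = \theta - t$, reduce the problem to evaluating, for each frequency $k$, the one-dimensional kernel integral
\begin{align*}
c_k := \int_{\R} \frac{1-\cos(kt)}{t^2}\,\md t.
\end{align*}
This is where the key analytic input lies: one shows $c_k = \pi |k|$ (for instance by the substitution $s = kt$, which gives $c_k = |k|\int_{\R}\frac{1-\cos s}{s^2}\,\md s$, and the last integral equals $\pi$ by a standard contour or Frullani-type argument). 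The periodicity of $h$ must be used to justify that the integration over $\phi \in \R$ against the periodic $h$ localises correctly; concretely one writes $|h(\theta)-h(\phi)|^2 = \sum_{k,l}(\text{products of Fourier modes})$ and uses orthogonality of $\{\cos k\theta, \sin k\theta\}$ over $[0,2\pi]$ to kill the cross terms after integrating in $\theta$.

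In more detail: writing $h(\theta)-h(\phi) = \sum_{k\ge 1} A_k(\cos k\theta - \cos k\phi) + B_k(\sin k\theta - \sin k\phi)$ (the constant $A_0/2$ drops out of the difference), I would expand the square and integrate first in $\theta$ over $[0,2\pi]$. Orthogonality collapses the sum to a diagonal one; the surviving terms are governed by integrals of the form $\int_0^{2\pi}\int_{\R}\frac{(\cos k\theta - \cos k\phi)^2}{(\theta-\phi)^2}\,\md\phi\,\md\theta$ and the analogous one with sines. Using the product-to-sum formulas and the translation $\phi = \theta - t$, each such term evaluates to $2\pi \cdot c_k = 2\pi^2 k$ after accounting for the factor coming from $\int_0^{2\pi}\md\theta = 2\pi$ and the normalisation. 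Summing over $k$ yields $[h]_{1/2}^2 = 2\pi^2 \sum_{k\ge 1} k(A_k^2 + B_k^2)$, which is \eqref{thehitch}.

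I expect the main obstacle to be the careful handling of convergence and the interchange of summation and integration, since the kernel $(\theta-\phi)^{-2}$ is non-integrable near the diagonal and individual Fourier modes do not decay. The clean way around this is to work under the standing hypothesis that the right-hand side of \eqref{thehitch} is finite, which is exactly the assumption in the statement; this guarantees $h \in H^{1/2}$ and legitimises the manipulations by a density/truncation argument (prove the identity first for trigonometric polynomials, where everything is a finite sum and the diagonal singularity is tamed by $|\cos k\theta - \cos k\phi| \le C_k|\theta-\phi|$, then pass to the limit using that both sides are continuous in the $H^{1/2}$-topology). Alternatively, as the statement notes, one may simply cite \cite[Prop.~3.4]{NPV2012} with $s = \tfrac12$ and adapt the Fourier-series (rather than Fourier-transform) version; I would include the self-contained computation above for completeness, flagging the truncation step as the only non-formal point.
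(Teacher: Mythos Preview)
Your proposal is correct and follows essentially the same route as the paper: change variables to $t=\theta-\phi$, use Fourier orthogonality in the periodic variable to diagonalise, and evaluate the remaining kernel integral $\int_{\R}\frac{1-\cos(kt)}{t^2}\,\md t=\pi k$. The paper packages the orthogonality step as a single application of Parseval's formula to $\phi\mapsto h(z+\phi)-h(\phi)$ and justifies the interchange of sum and integral via the monotone convergence theorem (the summands $(A_k^2+B_k^2)\frac{1-\cos(kz)}{z^2}$ are nonnegative), together with Fubini--Tonelli for the two integrals; this is a little slicker than your truncation-then-density argument, but the mathematics is the same.
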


\begin{proof} Let
$    G(\theta,\phi) = \frac{h(\theta)-h(\phi)}{\theta-\phi}$
and note that, by a simple change of variables, 
\begin{align}
 \nonumber \int_0^{2\pi} \int_{\R} G(\theta,\phi)^2 \dtheta \dphi & =  \int_0^{2\pi} \int_{\R} G(z+\phi,\phi)^2 \dz \dphi \\ \label{ftt1} & = \int_0^{2\pi} \int_{\R} \frac{(h(z+\phi)-h(\phi))^2}{z^2} \dz \dphi.
\end{align}
We focus on showing that 
\begin{align}
\label{ftt2} \int_{\R}  \int_0^{2\pi} \frac{(h(z+\phi)-h(\phi))^2}{z^2} \dphi \dz & = 2 \pi^2 \sum_{k=1} k(A_k^2+B_k^2)   
\end{align}
under the assumption that its right-hand side is finite.  Once this is done, Fubini-Tonelli will guarantee that the integrals in \eqref{ftt1} and \eqref{ftt2} coincide, from which the proof is easily concluded.

Now, a short calculation shows that  
\begin{align*}\frac{h(\phi+z)-h(\phi)}{z} & = \sum_{k=1}^\infty 
\frac{U_k(z)}{z}\cos(k\phi) + \frac{V_k(z)}{z} \sin(k \phi) 
    \end{align*}
    where
    \begin{align*}
    U_k(z)& =A_k (\cos(kz)-1)+B_k \sin(kz), \\
    V_k(z) & = B_k (\cos(kz)-1) - A_k \sin(kz).
    \end{align*}
Since the right-hand side of \eqref{ftt2} is finite, and since each of $|U_k(z)|$ and $|V_k(z)|$ can be bounded above by a fixed multiple of $|A_k|+|B_k|$, it is clear that for a.e.\@ fixed $z$ the function  $\phi \mapsto G(z+\phi,\phi)=\sum_{k=1}^\infty \frac{U_k(z)}{z}\cos(k\phi)+ \frac{V_k(z)}{z}\sin(k\phi)$ belongs to $L^2(0,2\pi)$, and moreover, by Parseval's formula, that 
\begin{align}
   \label{ftt3}  \int_0^{2\pi} G(z+\phi,\phi)^2 \dphi & = \pi \sum_{k=1}^\infty \frac{U_k^2(z)+V_k^2(z)}{z^2} \\
  \label{ftt4}  & = 2 \pi \sum_{k=1}^\infty (A_k^2+B_k^2)\left(\frac{1-\cos(kz)}{z^2}\right). 
\end{align}
A short calculation shows that for each $k \in \N$
\begin{align*}
    \int_{\R} \frac{1-\cos(kz)}{z^2} \dz = \pi k,
\end{align*}
and an application of the monotone convergence theorem then yields 
\begin{align} \label{ftt5}
   \int_{\R} 2 \pi \sum_{k=1}^\infty (A_k^2+B_k^2)\left(\frac{1-\cos(kz)}{z^2}\right) \dz & = 2\pi^2 \sum_{k=1}^{\infty}k(A_k^2+B_k^2).
\end{align}
Putting \eqref{ftt3}, \eqref{ftt4} and \eqref{ftt5} together gives \eqref{ftt2}, as desired.  
\end{proof}

Applying \eqref{thehitch} to  \eqref{whalf} and \eqref{zhalf}, we see that
\begin{align} \label{faurep}\D(W,B)& = \frac{1}{\pi} [h^+]_{_{\frac{1}{2}}}^2, \quad \mathrm{and} \\
\label{faurem}\D(Z,B)& = \frac{1}{\pi} [h^-]_{_{\frac{1}{2}}}^2.
\end{align}
It turns out that $[h^+]_{_{\frac{1}{2}}}$ and $[h^-]_{_{\frac{1}{2}}}^2$ are `comparable quantities' in the following sense.
\begin{proposition}\label{comphpm} Let $h$ satisfy (H1) and (H2).  Then there is $\gamma_0>0$ independent of $h$ such that 
\begin{align}\label{sonofhitch}
[h^+]_{_{\frac{1}{2}}}^2 \geq \gamma_0 [h^-]_{_{\frac{1}{2}}}^2.
\end{align}
\end{proposition}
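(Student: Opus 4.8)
The plan is to diagonalise in $h^+$ and $h^-$. Put $u:=h^++h^-$ and $v:=h^+-h^-$, so $h^\pm=\tfrac12(u\pm v)$ and hence $[h^\pm]_{1/2}^2=\tfrac14[u\pm v]_{1/2}^2=\tfrac12\bigl([u]_{1/2}^2\pm s\bigr)$, where $s:=\langle u,v\rangle_{1/2}$ is the bilinear form polarising $[\,\cdot\,]_{1/2}^2$ and we have used $[v]_{1/2}=[u]_{1/2}$, to be justified now. Inspecting \eqref{hpm}, as functions on the circle $u$ is supported on the arc $I_1:=[\pi/4,3\pi/4]$, where $u=2h$, while $v$ is supported on $I_2:=I_1+\pi=[5\pi/4,7\pi/4]$, and in fact $v(\theta)=u(\theta-\pi)$. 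Since $\theta\mapsto\theta-\pi$ is a rotation of $\mathbb S^1$, it leaves both $\|\cdot\|_{L^2(\mathbb S^1)}$ and $[\,\cdot\,]_{1/2}$ invariant (for the latter, change variables in the defining double integral; equivalently it multiplies each Fourier coefficient by $(-1)^k$), whence $\|v\|_{L^2}=\|u\|_{L^2}$ and $[v]_{1/2}=[u]_{1/2}$. By Cauchy--Schwarz $|s|\le[u]_{1/2}^2$, so $[h^-]_{1/2}^2=\tfrac12([u]_{1/2}^2-s)\le[u]_{1/2}^2$, and the proposition reduces to the \emph{strict, uniform} lower bound $s\ge-(1-\delta)[u]_{1/2}^2$ for some fixed $\delta>0$: then $[h^+]_{1/2}^2=\tfrac12([u]_{1/2}^2+s)\ge\tfrac\delta2[u]_{1/2}^2\ge\tfrac\delta2[h^-]_{1/2}^2$.

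\textbf{Bounding the cross term by $L^2$.} The improvement over naive Cauchy--Schwarz comes from $u,v$ being supported on arcs a fixed distance apart, which is exactly where (H1) (support of $h$ bounded away from $0$ and $\pi$) enters. Expanding $(u(\theta)-u(\phi))(v(\theta)-v(\phi))$ in the double integral for $\langle u,v\rangle_{1/2}$ and using $uv\equiv0$ (disjoint supports), the diagonal terms cancel; collapsing the $\phi$-integral over $\R$ onto one period via the classical identity $\sum_{k\in\mathbb Z}(\theta-2\pi k)^{-2}=\bigl(4\sin^2(\theta/2)\bigr)^{-1}$ leaves an integral over $I_1\times I_2$ of $|u(\theta)|\,|v(\phi)|$ against the kernel $\bigl(4\sin^2\tfrac{\theta-\phi}{2}\bigr)^{-1}$. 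Since $\mathrm{dist}(I_1,I_2)=\pi/2$ forces $\sin^2\tfrac{\theta-\phi}{2}\ge\tfrac12$ there,
\begin{align*}
|s|\ \le\ \|u\|_{L^1(I_1)}\,\|v\|_{L^1(I_2)}\ =\ \|u\|_{L^1(I_1)}^2\ \le\ |I_1|\,\|u\|_{L^2}^2\ =\ \tfrac\pi2\,\|u\|_{L^2}^2,
\end{align*}
using $\|v\|_{L^1(I_2)}=\|u\|_{L^1(I_1)}$ and Cauchy--Schwarz.

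\textbf{Bounding $\|u\|_{L^2}$ by $[h^+]_{1/2}$, and closing the loop.} This step genuinely uses that $I_1$ is a \emph{proper} sub-arc. Since $h^+=\tfrac12u$ on $I_1$ and $h^+\equiv0$ on the gap $G:=(3\pi/4,5\pi/4)$ separating the supports of $u$ and $v$, discarding all but the $(\theta,\phi)\in I_1\times G$ contribution to the double integral defining $[h^+]_{1/2}^2$ gives
\begin{align*}
[h^+]_{1/2}^2\ \ge\ \int_{I_1}|h^+(\theta)|^2\Bigl(\int_G\frac{\dphi}{(\theta-\phi)^2}\Bigr)\dtheta\ \ge\ \tfrac1\pi\int_{I_1}|h^+|^2\ =\ \tfrac1{4\pi}\,\|u\|_{L^2}^2,
\end{align*}
the middle inequality because $\int_G(\theta-\phi)^{-2}\,\dphi\ge\tfrac1\pi$ for every $\theta\in I_1$ by an elementary computation. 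Combining the two displays with the identity $[u]_{1/2}^2+s=2[h^+]_{1/2}^2\ (\ge0)$ yields $|s|\le\tfrac\pi2\|u\|_{L^2}^2\le2\pi^2[h^+]_{1/2}^2=\pi^2\bigl([u]_{1/2}^2+s\bigr)$, which rearranges to $s\ge-\tfrac{\pi^2}{1+\pi^2}[u]_{1/2}^2$. Hence $\delta=\tfrac1{1+\pi^2}$ works, and one may take $\gamma_0=\tfrac1{2(1+\pi^2)}$; via \eqref{faurep}--\eqref{faurem} this is the same as $\D(W,B)\ge\gamma_0\,\D(Z,B)$.

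\textbf{Remarks on rigour and the main obstacle.} Two points need care: the seminorm is the $\int_0^{2\pi}\!\int_{\R}$ version of Lemma~\ref{hitchlem}, so $\langle\,\cdot\,,\cdot\,\rangle_{1/2}$ and the Cauchy--Schwarz inequality (the latter from nonnegativity of $[u+tv]_{1/2}^2$ in $t\in\R$) must be set up with that same pair of ranges; and the kernel periodisation is legitimate because on $I_1\times I_2$ the difference $\theta-\phi$ stays in $[\pi/2,3\pi/2]$ modulo $2\pi$, away from $2\pi\mathbb Z$. Also $h\in H^1$ gives $h^\pm\in H^1(\mathbb S^1)\subset H^{1/2}(\mathbb S^1)$, so every quantity above is finite. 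I expect the only real obstacle to be recognising the correct manoeuvre: the obvious Cauchy--Schwarz estimate for $s$ carries a constant larger than $1$ and is useless, and the geometric separation of the arcs can be converted into a \emph{strict} inequality only through the $L^2$ detour followed by the self-referential bootstrap $|s|\lesssim\|u\|_{L^2}^2\lesssim[h^+]_{1/2}^2=\tfrac12([u]_{1/2}^2+s)$.
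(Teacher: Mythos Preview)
Your proof is correct and takes a genuinely different route from the paper's.

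The paper expands $[h^\pm]_{1/2}^2$ directly via the double integral and the $2\pi$-periodicity, obtaining explicit sums $S_{\pm}$ involving kernels $T_n(\theta,\phi)=(\theta-\phi-n\pi)^{-2}$ integrated over $[0,\pi]^2$. It then reduces the comparison to a pointwise kernel inequality $\sum_{j=1}^4 K_j(\phi,k)\geq l\,K_5(\phi,k)$ on $\omega\times\N$, which it establishes by combining continuity for small $k$ with computer-algebra asymptotics $\sum_j K_j/K_5 \to \tfrac12$ as $k\to\infty$. The constant $\gamma_0$ that emerges is not explicit; a remark reports that numerics suggest $\gamma_0=\tfrac14$.

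Your argument is structurally cleaner: the substitution $u=h^++h^-$, $v=h^+-h^-$ diagonalises the problem into a single cross-term $s=\langle u,v\rangle_{1/2}$ between two functions with supports on arcs at distance $\pi/2$. The separation forces the periodised kernel to be bounded, giving $|s|\lesssim\|u\|_{L^2}^2$, and the gap $G$ on which $h^+$ vanishes gives the Poincar\'e-type bound $\|u\|_{L^2}^2\lesssim[h^+]_{1/2}^2$. Feeding this back into $[h^+]_{1/2}^2=\tfrac12([u]_{1/2}^2+s)$ is the decisive bootstrap. All steps are elementary and yield an explicit constant $\gamma_0=\tfrac{1}{2(1+\pi^2)}\approx 0.046$, weaker than the paper's conjectured $\tfrac14$ but obtained without computer assistance. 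A minor observation: your argument never invokes (H2), so it actually proves the proposition under (H1) alone; the paper's proof likewise does not seem to use (H2) in any essential way.
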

\begin{proof}
In the following, we let 
\begin{align*}
    \delta(\theta,\phi) &:=h(\theta)-h(\phi) \\
    \sigma(\theta,\phi) & :=h(\theta)+h(\phi)
\end{align*}
and $T_n(\theta,\phi)=(\theta-\phi - n\pi)^{-2}$ for all $n \in \N$ and $\theta,\phi \in [0,\pi]$. Using Lemma \ref{hitchlem} and the definitions of $h^{\pm}$, a calculation shows that
\begin{align*}
    \frac{1}{2}[h^+]_{\frac{1}{2}}^2 & = \int_0^\pi \int_0^\pi  \delta^2(\theta,\phi)T_0(\theta,\varphi) \dtheta\dphi + 2 \underbrace{\sum_{k=1}^\infty \int_0^\pi \int_0^\pi \delta^2(\theta,\phi) (T_{2k}(\theta,\phi)+T_{2k-1}(\theta,\phi)) \dtheta\dphi}_{S_+} \\
    \frac{1}{2}[h^-]_{\frac{1}{2}}^2 & = \int_0^\pi \int_0^\pi  \delta^2(\theta,\phi)T_0(\theta,\varphi) \dtheta\dphi \, + & \\
    &  \quad \quad \quad \quad \quad \quad  + 2\underbrace{\sum_{k=1}^\infty \int_0^\pi \int_0^\pi \delta^2(\theta,\phi)T_{2k}(\theta,\phi) \dtheta\dphi + 
    \int_0^\pi \int_0^\pi \sigma^2(\theta,\phi)T_{2k-1}(\theta,\phi) \dtheta\dphi}_{S_{-}},
\end{align*}
Recall that $\textrm{spt}\,h\subset \omega=[\pi/4,3\pi/4]$, denote by $\tilde \omega$ the complement of $\omega$ in $[0,\pi]$ and consider
\begin{align*}
   S_+ & \geq \sum_{k=1}^\infty \int_{\omega} \int_{\tilde \omega} \delta^2(\theta,\phi) T_{2k}(\theta,\phi) \dtheta\dphi +\int_{\tilde\omega} \int_{\omega} \delta^2(\theta,\phi)T_{2k}(\theta,\phi) \dtheta\dphi + \\ 
   & +\int_{\omega} \int_{\tilde \omega} \delta^2(\theta,\phi)T_{2k-1}(\theta,\phi) \dtheta\dphi +\int_{\tilde\omega} \int_{\omega} \delta^2(\theta,\phi)T_{2k-1}(\theta,\phi) \dtheta\dphi,
\end{align*}
which uses the fact that $(\tilde{\omega} \times \omega) \cup (\omega \times \tilde{\omega}) \subset [0,1]^2$ and that the two sets forming the union are disjoint.  Notice that 
\begin{displaymath}
(h(\theta)-h(\phi))^2=\left\{\begin{array}{l l}h^2(\theta) & \textrm{if } (\theta,\phi)\in \omega \times \tilde \omega \\
h^2(\phi) & \textrm{if } (\theta,\phi)\in \tilde \omega \times \omega. \end{array}\right.
\end{displaymath}
Hence the summands simplify to 
\begin{align*}\int_\omega\int_{\tilde \omega} h^2(\theta)T_{2k}(\theta,\phi)\dtheta \dphi + & \int_{\tilde\omega} \int_{\omega} h^2(\phi) T_{2k}(\theta,\phi) \dtheta \dphi + \\   & +\int_\omega\int_{\tilde \omega} h^2(\theta)T_{2k-1}(\theta,\phi)\dtheta \dphi + \int_{\tilde\omega} \int_{\omega} h^2(\phi) T_{2k-1}(\theta,\phi) \dtheta \dphi,
\end{align*}
and, taking each in term in turn, we calculate
\begin{align*}
    \int_\omega\int_{\tilde \omega} h^2(\theta)T_{2k}(\theta,\phi)\dtheta \dphi & = \int_{\omega}h^2(\phi) K_2(\phi;k) \dphi, \\
    \int_{\tilde\omega} \int_{\omega} h^2(\phi) T_{2k}(\theta,\phi) \dtheta \dphi  & =  \int_{\omega}h^2(\phi) K_1(\phi;k) \dphi, \\
    \int_\omega\int_{\tilde \omega} h^2(\theta)T_{2k-1}(\theta,\phi)\dtheta \dphi & =  \int_{\omega}h^2(\phi) K_4(\phi;k) \dphi,\\
    \int_{\tilde\omega} \int_{\omega} h^2(\phi) T_{2k-1}(\theta,\phi) \dtheta \dphi & =  \int_{\omega}h^2(\phi) K_3(\phi;k) \dphi.
\end{align*}
Here, the kernels $K_i(\phi,k)$, $i=1,\dots,4$, are given by 
\begin{align*}
    K_1(\phi,k) & = K(\phi,-2k-1/4)-K(\phi,- 2k) +K(\phi,-2k-1)-K(\phi,-2k-3/4) \\
    K_2(\phi,k) & = K(\phi,2k-1/4) -K(\phi,2k)+K(\phi,2k-1)-K(\phi,2k-3/4) \\
    K_3(\phi,k) & = K(\phi,-2k+3/4))-K(\phi,-2k+1)+K(\phi,-2k) - K(\phi,-2k+1/4) \\
    K_4(\phi,k) & = K(\phi,2k-5/4)-K(\phi,2k-1)+ K(\phi,2k-2) -K(\phi,2k-7/4),
    \end{align*}
where $K(\phi,k):=(\phi+k \, \pi)^{-1}$.
In summary, 
\begin{align*}
    S_{+} & \geq \sum_{k=1}^\infty \int_\omega h^2(\phi)\left(K_1(\phi,k)+K_2(\phi,k)+K_3(\phi,k)+K_4(\phi,k)\right) \dphi.
\end{align*}

Now we look for an upper bound on $\frac{1}{2}[h^-]_{\frac{1}{2}}^2$ by comparing $S_{-}$ and $S_+$.  We have 
\begin{align*}
    S_- & = \sum_{k=1}^\infty \int_0^\pi \int_0^\pi \delta(\theta,\phi)^2T_{2k}(\theta,\phi) \dtheta \dphi + 
    \int_0^\pi \int_0^\pi \sigma(\theta,\phi)^2T_{2k-1}(\theta,\phi) \dtheta\dphi \\
    & \leq 2 \sum_{k=1}^{\infty} \int_0^\pi\int_0^\pi (h^2(\theta)+h^2(\phi))(T_{2k}(\theta,\phi)+T_{2k-1}(\theta,\phi))\dtheta \dphi \\
    & \leq 2 \sum_{k=1}^{\infty} \int_\omega h^2(\phi)K_5(\phi,k) \dphi,
\end{align*}
where
\begin{align*}
    K_5(\phi,k) & = K(\phi,-2k-1)-K(\phi,-(2k-1))+ K(\phi,2k-2) - K(\phi,2k). 
\end{align*}
This follows by repeatedly using the facts that $\int_0^\pi T_m(\theta,\phi) \dtheta=K(\phi,m+1)-K(\phi,m)$ and $\int_0^\pi T_m(\theta,\phi) \dphi= K(\theta,-m-1)-K(\theta,-m)$ for suitable choices of integer $m$.
We claim there is $l>0$ independent of $k$ and $\phi$ such that 
\begin{align}\label{korngold1}
K_1(\phi,k)+K_2(\phi,k)+K_3(\phi,k)+K_4(\phi,k) \geq l \, K_5(\phi,k)
\end{align}
for $k\in \N$ and $\phi \in \omega$.   To prove the claim, a direct calculation using a computer algebra package 
(Maple$^{\textrm{TM}}$ in this case) 
shows that 
\begin{align}
    K_5(\phi,k) & = \frac{1}{\pi k^2} + O\left(\frac{1}{k^3}\right) \quad \textrm{as} \ k \to \infty \label{MathematicaExpr1}\\
      \sum_{j=1}^4 K_j(\phi,k)  & \simeq \frac{1}{2 \pi k^2} + O\left(\frac{1}{k^3}\right) \quad \textrm{as} \ k \to \infty. 
      \label{MathematicaExpr2} \\
 \frac{\sum_{j=1}^4 K_j(\phi,k)}{K_5(\phi,k)}  & \simeq \frac{1}{2} + O\left(\frac{1}{k^2}\right) \quad \textrm{as} \ k \to \infty. 
 \label{MathematicaExpr3} 
\end{align}
In fact, \eqref{MathematicaExpr1} is easily verified by hand, while \eqref{MathematicaExpr2} can be deduced by observing that the behaviour of $\sum_{j=1}^4 K_j(\phi,k)$ for large $k$ is independent of $\phi$, so we formally set $\phi=0$, substitute $y=1/k$ and Taylor expand, in $y$, the expression $K(0,1/y)$.  Therefore, provided $l_1 < \frac{1}{2}$, it follows that
\begin{align}\label{korngold2}
K_1(\phi,k)+K_2(\phi,k)+K_3(\phi,k)+K_4(\phi,k) \geq l_1 \, K_5(\phi,k)
\end{align}
for sufficiently large $k$.   To finish the argument, note that for each fixed $k$ the function $\sum_{j=1}^4K_j(\phi,k)$ is continuous in $\phi \in \omega$ and bounded away from zero, as is each function $K_5(\phi,k)$.  Thus, given $k_0$, there is $l_0>0$ such that \eqref{korngold1} holds for $k=1,\ldots,k_0$ uniformly in $\phi$.  By taking $k_0$ sufficiently large and applying \eqref{korngold2}, we deduce that \eqref{korngold1} holds for all $k \in \N$ and all $\phi \in \omega$ with $l=\min\{l_0,l_1\}$, proving the claim.

To finish the proof of the proposition, we combine the estimates above as follows:
\begin{align*}
  \frac{1}{2}[h^+]_{\frac{1}{2}}^2 & = \int_0^\pi \int_0^\pi  \delta^2(\theta,\phi)T_0(\theta,\varphi) \dtheta\dphi + 2 S_+ \\ 
  & \geq \int_0^\pi \int_0^\pi  \delta^2(\theta,\phi)T_0(\theta,\varphi) \dtheta\dphi + 2 \sum_{k=1}^{\infty} \int_{\omega} h^2(\phi) \sum_{j=1}^4 K_j(\phi,k) \dphi \\
  & \geq \int_0^\pi \int_0^\pi  \delta^2(\theta,\phi)T_0(\theta,\varphi) \dtheta\dphi + 2l \sum_{k=1}^{\infty} \int_{\omega} h^2(\phi) K_6(\phi,k) \dphi \\
  & \geq \int_0^\pi \int_0^\pi  \delta^2(\theta,\phi)T_0(\theta,\varphi) \dtheta\dphi + l S_- \\
  & \geq \frac{L}{2}\left( \int_0^\pi \int_0^\pi  \delta^2(\theta,\phi)T_0(\theta,\varphi) \dtheta\dphi + 2 S_-\right) \\
  & = \frac{l}{4}[h^-]_{\frac{1}{2}}^2.
\end{align*}
Thus, \eqref{sonofhitch} holds with $\gamma_0=\frac{l}{2}$.
\end{proof}

\referee 
\begin{remark}\emph{
In fact, numerical results indicate that the constant $\gamma_0$ in Proposition \ref{comphpm} can be taken to be $\frac{1}{4}$, which amounts to checking that $l=\frac{1}{2}$ is permissible in inequality \eqref{korngold1}. 
We find that the quotient 
\begin{align*}
Q(\phi,k):=\frac{\sum_{j=1}^4 K_j(\phi,k)}{K_5(\phi,k)}
\end{align*}
obeys $Q(\phi,k)\geq \frac{1}{2}$ uniformly in $\phi \in \omega$ for all values of $k$ tested.  We also observed that for fixed $k$, $\phi \mapsto Q(\phi,k)$ appears to be convex in $\phi$ and symmetric about $\phi=\frac{\pi}{2}$, so one approach to establishing the lower bound for all $k$ would be to show that $Q(\frac{\pi}{2},k) \geq \frac{1}{2}$ for all $k$.}
\end{remark}
\EEE

Let us return to the lower bound \eqref{AcesUp}, which we reprint here for convenience:
\begin{align}\label{AcesUpUp}
 \frac{1}{2}I(\varphi,\INS{-c}{0}{ 0}{c} \,) \geq \int_{S_1} |\nabla F_1|^2 \dx + \left(1-\frac{c^2}{4}\right) \int_{S_2} |\nabla F_2|^2 \dx.
\end{align}
Given $f_1$, $F_2$ is completely determined by \referee$\Delta F_2=0$ \EEE and the boundary conditions
\begin{displaymath}\label{F2bc}  F_2(x_1,x_2)=\left\{\begin{array}{l l} f_1(\frac{1}{2},x_2) & \mathrm{if } \ x_1=\frac{1}{2}, \ |x_2| \leq \frac{1}{2} \\
0 & \mathrm{if } \ \frac{1}{2} \leq x_1 \leq 1 \ \mathrm{and }  \ x_2 = \pm \frac{1}{2} \ \mathrm{or } \ x_1 =1  \ \mathrm{and }\ |x_2| \leq \frac{1}{2}.
\end{array}\right.
\end{displaymath}
i.e. $F_2\arrowvert_{\scriptscriptstyle{\Gamma_1}} = f_1$ and $F_2\arrowvert_{\partial R_2 \setminus \scriptscriptstyle{\Gamma_1}} =0.$   In the case of $F_1$ we have 
\begin{align}\label{f1star} \int_{S_1} |\nabla F_1|^2 \dx  \geq \int_{S_1} |\nabla F_1^*|^2 \dx,
\end{align}
where $F_1^*$ minimizes $D(\cdot, S_1)$ subject to the boundary conditions 
\begin{align*}
F_1^*(x)=\left\{\begin{array}{l l} 0 & \textrm{if } x \in \partial S_1, \ x_2 =\pm 1, \\
f_1  & \textrm{if } x \in \Gamma_1.\end{array}\right.
\end{align*}
The natural boundary condition $\partial_1 F_1^*$ then holds on $\Gamma_0$.   The connection with problems $\oplus$ and $\ominus$ can now be made via a suitable conformal transformation, as follows.

First, we consider the problem of minimizing $\D(F_2,R_2)$ subject to the boundary conditions \eqref{F2bc} by recasting in the form of problem $\ominus$.  To begin with, since $F_2\arrowvert_{\scriptscriptstyle{\Gamma_2}}=0$, we can extend $F_2$ to a map on the square $S_2:=[\frac{1}{2},\frac{3}{2}] \times [-\frac{1}{2},\frac{1}{2}]$ by setting $F_2(x_1,x_2)=-F_2(2-x_1,x_2)$ for $1\leq x_2 \leq \frac{3}{2}$.  Then, in particular, $F_2$ will minimize $\D(\cdot,S_2)$ and, by a trivial coordinate translation, $F_2$ is equivalent to a map $F_2'$, say, that minimizes $\D(\cdot,[0,1]^2)$ when subject to the conditions \begin{align} \label{bc:F2primereal}
F_2'(z)=\left\{\begin{array}{l l l} f_1(x_2+\frac{1}{2}) & \mathrm{if } & z=x_2 i \in [0,i] \\ 
0 &  \mathrm{if } & z \in [0,1] \cup [i,1+i] \\
 -f_1(x_2+\frac{1}{2}) & \mathrm{if } & z=1+x_2i \in [1,1+i]. 
  \end{array}\right.
\end{align}
It may help to look at Figure \ref{Byrd1} at this point. 

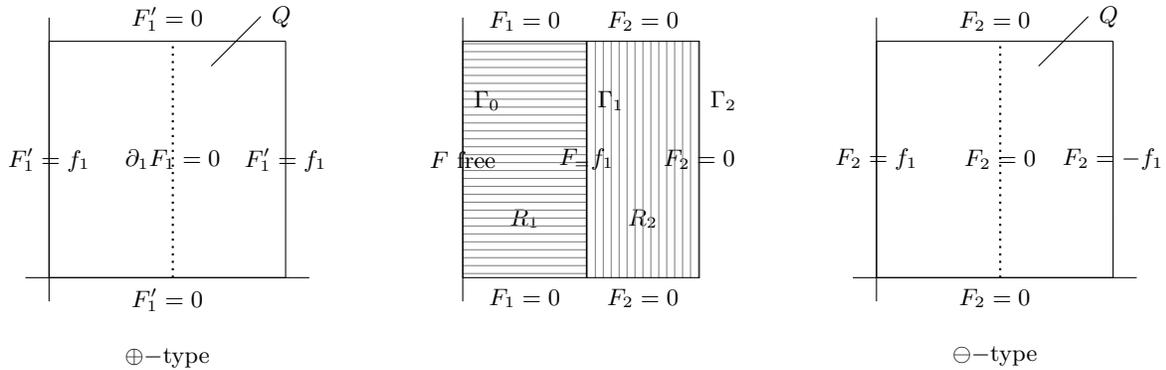
\begin{figure}[ht]
\centering
\begin{tikzpicture}[scale=0.55]

\begin{scope}[shift={(0,0)}]
\node (A) at (0,0) {}; 
\node[right=1.5 of A.center] (B) {};
\node[right=3 of A.center] (C) {};
\node[above=3 of A.center] (A') {};
\node[above=3 of B.center] (B') {};
\node[above=3 of C.center] (C') {};

\draw (A.center) -- (C.center);
\draw (A'.center) -- (C'.center);
\draw (A.center) -- (A'.center);
\draw (C.center) -- (C'.center);
\draw[thick, dotted] (B.center) -- (B'.center);
\draw ($(C')+(-1.8,-0.6)$)-- +(1.2,1.2)  node[right]{\scriptsize $Q$};
\node (U) at ($(A)!0.5!(A')$) {\scriptsize $F_1'=f_1$};
\node (U) at ($(C)!0.5!(C')$) {\scriptsize $F_1'=f_1$};
\node[above] (U) at ($(A')!0.5!(C')$) {\scriptsize $F_1'=0$};
\node[below] (D) at ($(A)!0.5!(C)$) {\scriptsize $F_1'=0$};
\node (U) at ($(B)!0.5!(B')$) {\scriptsize $\partial_1 F_1=0$};
\node[below=0.5 of D.center] (U) {\scriptsize $\oplus-$type};

\node (U) at ($(A)!-0.1!(A')$) {};
\node (V) at ($(A)!1.1!(A')$) {};
\draw (U.center) -- (V.center);
\node (U) at ($(A)!-0.1!(C)$) {};
\node (V) at ($(A)!1.1!(C)$) {};
\draw (U.center) -- (V.center);

\end{scope}
\begin{scope}[shift={(10,0)}]
\usetikzlibrary {patterns,patterns.meta} 
\node (A) at (0,0) {}; 
\node[right=1.5 of A.center] (B) {};
\node[right=3 of A.center] (C) {};
\node[above=3 of A.center] (A') {};
\node[above=3 of B.center] (B') {};
\node[above=3 of C.center] (C') {};

\node (M) at ($(B)!0.5!(B')$) {};
\node (U) at ($(A)!0.5!(M)$) {\scriptsize $R_1$};
\node (U) at ($(C)!0.5!(M)$) {\scriptsize $R_2$};

\filldraw[pattern={horizontal lines},pattern color=gray] (A.center) rectangle (B'.center);
\filldraw[pattern={vertical lines},pattern color=gray] (B.center) rectangle (C'.center);

\node[below] (U) at ($(A)!0.5!(B)$) {\scriptsize $F_1=0$};
\node[below] (U) at ($(B)!0.5!(C)$) {\scriptsize $F_2=0$};
\node[above] (U) at ($(A')!0.5!(B')$) {\scriptsize $F_1=0$};
\node[above] (U) at ($(B')!0.5!(C')$) {\scriptsize $F_2=0$};

\node (U1) at ($(A)!0.5!(A')$) {\scriptsize $F$ free};
\node (U2) at ($(C)!0.5!(C')$) {\scriptsize $F_2=0$};
\node (U) at (M) {\scriptsize $F_=f_1$};
\node[right] (U) at ($(A)!0.75!(A')$) {\scriptsize $\Gamma_0$};
\node[right] (U) at ($(B)!0.75!(B')$) {\scriptsize $\Gamma_1$};
\node[right] (U) at ($(C)!0.75!(C')$) {\scriptsize $\Gamma_2$};

\node (U) at ($(A)!-0.1!(A')$) {};
\node (V) at ($(A)!1.1!(A')$) {};
\draw (U.center) -- (V.center);
\node (U) at ($(U1)!-0.1!(U2)$) {};
\node (V) at ($(U1)!1.1!(U2)$) {};

\end{scope}

\begin{scope}[shift={(20,0)}]
\node (A) at (0,0) {}; 
\node[right=1.5 of A.center] (B) {};
\node[right=3 of A.center] (C) {};
\node[above=3 of A.center] (A') {};
\node[above=3 of B.center] (B') {};
\node[above=3 of C.center] (C') {};

\draw (A.center) -- (C.center);
\draw (A'.center) -- (C'.center);
\draw (A.center) -- (A'.center);
\draw (C.center) -- (C'.center);
\draw[thick, dotted] (B.center) -- (B'.center);
\draw ($(C')+(-1.8,-0.6)$)-- +(1.2,1.2)  node[right]{\scriptsize $Q$};

\node (U) at ($(A)!0.5!(A')$) {\scriptsize $F_2=f_1$};
\node (U) at ($(C)!0.5!(C')$) {\scriptsize $F_2=-f_1$};
\node[above] (U) at ($(A')!0.5!(C')$) {\scriptsize $F_2=0$};
\node[below] (D) at ($(A)!0.5!(C)$) {\scriptsize $F_2=0$};
\node (U) at ($(B)!0.5!(B')$) {\scriptsize $F_2=0$};
\node[below=0.5 of D.center] (U) {\scriptsize $\ominus-$type};

\node (U) at ($(A)!-0.1!(A')$) {};
\node (V) at ($(A)!1.1!(A')$) {};
\draw (U.center) -- (V.center);
\node (U) at ($(A)!-0.1!(C)$) {};
\node (V) at ($(A)!1.1!(C)$) {};
\draw (U.center) -- (V.center);

\end{scope}
\end{tikzpicture}

\caption{This explains the conversion of the minimization problem posed on the central domain $R_1 \cup R_2$ into two separate problems defined on the unit square $Q$.  The minimizer of $\D(F_2,R_2)$ corresponds to the `left-hand half' of the minimizer of $\D(\cdot,Q)$ under the boundary conditions stated on $Q$, suitably shifted, as seen in the problem of $\ominus-$type.  The minimizer of $\D(F_1,R_1)$ corresponds to the `right-hand half' of the minimizer of $\D(\cdot,Q)$ under the boundary conditions stated on $Q$, suitably shifted, as seen in the problem of $\oplus-$type.}\label{Byrd1}
\end{figure}

Let $Q$ be the square in $\C$ with vertices at $0,1,i$ and $1+i$.  
By Proposition \ref{confexist} there exists a conformal map $\psi: D \to Q$, and using it we define an $\R^2$-valued boundary condition on $\partial D$ by setting
\begin{align}\label{bc:h2} h'(\zeta):=F_2'(\psi(\zeta)) \quad \quad \quad \zeta \in \partial D.\end{align}
By Proposition \ref{confexist}, the components $h'_1$ and $h'_2$ of $h'$ are such that each function 
$$ h_{j}(\theta):=h'_j(e^{i\theta})  \quad \quad \quad 0 \leq \theta \leq \pi, \quad  j = 1,2, $$
satisfies assumptions (H1) and (H2).  For $j=1,2$ let $Z_j$ solve problem $\ominus$ with the boundary condition $Z_j\arrowvert_{\partial D}=h_j^{-}$.  Notice that the function $Z'(\xi):=F'_2(\psi(\xi))$, $\xi \in D$, is harmonic and that it obeys the same boundary conditions as $Z$. Hence, by uniqueness, it must be that $Z=Z'$ in $D$, and so by conformal invariance
$$ \D(F_2'^{j},Q)=\D(Z_j,D) \quad \quad j=1,2.$$
Hence, 
\begin{align*}2 \D(F_2,S_2) & = \D(F_2',[0,1]^2) \\
& = \D(F_2'^1,Q)+\D(F_2'^2,Q) \\
& = \D(Z_1,D) + \D(Z_2,D) \\
& = \frac{1}{\pi}[h_1^{-}]^2_{\frac{1}{2}} + \frac{1}{\pi}[h_2^{-}]^2_{\frac{1}{2}}, 
\end{align*} 
where in passing from the third to the fourth line we have used \eqref{faurem}.
In summary, 
\begin{align} \label{ens2} \D(F_2,S_2)=\frac{1}{2\pi}[h_1^{-}]^2_{\frac{1}{2}} + \frac{1}{2\pi}[h_2^{-}]^2_{\frac{1}{2}}, 
\end{align}
A similar argument for $F_1^*$ yields 
\begin{align} \label{ens3} \D(F_1^*,S_1)=\frac{1}{2\pi}[h_1^{+}]^2_{\frac{1}{2}} + \frac{1}{2\pi}[h_2^{+}]^2_{\frac{1}{2}}, 
\end{align}
Proposition \eqref{comphpm} then implies that $\D(F_1^*,S_1) \geq \gamma_0 \D(F_2,S_2)$, and 
hence, from \eqref{AcesUpUp} and \eqref{f1star} it follows that
\begin{align*}
 \frac{1}{2}I(\varphi,\INS{-c}{0}{ 0}{c} \,)  & \geq \D(F_1^*,S_1) +  \left(1-\frac{c^2}{4}\right) \D(F_2,S_2) \\
 & \geq  \left(\gamma_0 + 1-\frac{c^2}{4}\right) \D(F_2,S_2),
\end{align*}
which is nonnegative provided 
$$c \leq 2\sqrt{1 + \gamma_0}.$$
In particular, a total jump in $f$ of $4\sqrt{1 + \gamma_0}$ is compatible with the nonnegativity of $I(\varphi,\INS{-c}{0}{0}{c} \,)$.  The foregoing results are summarised as follows.

\begin{proposition}\label{sum:4+} There exists $\gamma_0>0$ depending only on the domain $\om=R_{-2} \cup R_{-1} \cup R_{1} \cup R_2$ such that the functional $$ I(\varphi,\INS{-c}{0}{0}{c} \,):= \int_{\om} |\nabla \varphi|^2+f(x) \det \nabla \varphi(x) \dx $$
introduced in \eqref{I_four_square} is nonnegative on $W^{1,2}_0(\om,\R^2)$ provided 
$$|c| \leq 2\sqrt{1 + \gamma_0}.$$
\referee Numerical results indicate that $\gamma_0=\frac{1}{4}$, and hence that 
$|c| \leq \sqrt{5}$ is consistent with $$I(\varphi,\INS{-c}{0}{0}{c} \,) \geq 0$$ for all $\varphi$ in $W^{1,2}_0(\om,\R^2)$. \EEE
\end{proposition}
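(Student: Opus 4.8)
The plan is to chain together the estimates assembled in the discussion preceding the statement. First I would invoke the Lemma yielding \eqref{AcesUp}: applying the Dirichlet principle separately on $R_1$ and $R_2$, and using that $\det\nabla\varphi$ is a null Lagrangian, reduces the estimation of the functional $I$ in \eqref{I_four_square} to maps that are harmonic on each of $R_1$ and $R_2$; writing $\varphi=f+g$ with $f$ even and $g$ odd, and minimising pointwise the strongly convex quadratic $B\mapsto|A|^2+|B|^2+c\,\cof A\cdot B$, produces
\[
\tfrac{1}{2}I(\varphi)\ \geq\ \int_{R_1}|\nabla F_1|^2\dx+\Big(1-\tfrac{c^2}{4}\Big)\int_{R_2}|\nabla F_2|^2\dx,
\]
where $F_1$ and $F_2$ are the harmonic maps on $R_1$, $R_2$ sharing the trace $f_1$ on $\Gamma_1$ and vanishing on the remaining sides; for $F_1$ one uses in addition the comparison \eqref{f1star} with the minimiser $F_1^*$, whose natural boundary condition holds on $\Gamma_0$. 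Note that the right-hand side depends on $c$ only through $c^2$, which is why the final bound is for $|c|$.

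Second, I would convert the two Dirichlet energies into fractional seminorms of their boundary data. Reflecting $F_2$ across $\Gamma_2$ (and translating) recasts $\D(F_2,S_2)$ as a Dirichlet minimisation on the unit square $Q\subset\C$ with the anti-symmetric boundary data \eqref{bc:F2primereal}, and likewise $F_1^*$ gives rise to the $\oplus$-type problem on $Q$; pulling $Q$ back to the unit disc by the conformal map $\psi$ supplied by Proposition~\ref{confexist}, the traces satisfy (H1)--(H2), so that, by conformal invariance of the Dirichlet integral together with Lemma~\ref{hitchlem} and \eqref{faurep}--\eqref{faurem},
\[
\D(F_2,S_2)=\tfrac{1}{2\pi}\big([h_1^{-}]_{1/2}^2+[h_2^{-}]_{1/2}^2\big),\qquad \D(F_1^*,S_1)=\tfrac{1}{2\pi}\big([h_1^{+}]_{1/2}^2+[h_2^{+}]_{1/2}^2\big).
\]

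Third, applying Proposition~\ref{comphpm} to each component gives $\D(F_1^*,S_1)\geq\gamma_0\,\D(F_2,S_2)$ with $\gamma_0>0$ depending only on the fixed arc $\omega=[\pi/4,3\pi/4]$ --- equivalently, only on the geometry of $\om$ through the conformal map $\psi$ (indeed, since $S_1$ and $S_2$ are unit squares, $\gamma_0$ is even universal for this family of domains). Inserting this, \eqref{f1star} and \eqref{AcesUp} into one another yields
\[
\tfrac{1}{2}I(\varphi)\ \geq\ \Big(\gamma_0+1-\tfrac{c^2}{4}\Big)\D(F_2,S_2)\ \geq\ 0
\]
as soon as $c^2\leq 4(1+\gamma_0)$, i.e. $|c|\leq 2\sqrt{1+\gamma_0}$, which is the assertion.

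The genuine difficulty, and the step I expect to be the main obstacle, is Proposition~\ref{comphpm}: the comparability $[h^+]_{1/2}^2\geq\gamma_0[h^-]_{1/2}^2$ with $\gamma_0$ independent of $h$. Expanding both seminorms over $[0,\pi]^2$ via Lemma~\ref{hitchlem} peels off a common diagonal term $T_0$ and leaves two series $S_+$, $S_-$ built from the shifted kernels $T_m(\theta,\phi)=(\theta-\phi-m\pi)^{-2}$; restricting $S_+$ to the cross $(\omega\times\tilde\omega)\cup(\tilde\omega\times\omega)$ and using $\spt h\subset\omega$ reduces everything to one-dimensional integrals $\int_\omega h^2(\phi)K_i(\phi,k)\dphi$ against the explicit kernels $K_1,\dots,K_5$. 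The crux is the uniform lower bound $\sum_{j=1}^4K_j(\phi,k)\geq l\,K_5(\phi,k)$ for all $k\in\N$ and $\phi\in\omega$: for large $k$ this follows from the asymptotics $K_5\sim(\pi k^2)^{-1}$ and $\sum_j K_j\sim(2\pi k^2)^{-1}$, so any $l<\tfrac12$ is admissible, and for the finitely many remaining $k$ it follows from continuity and strict positivity of the kernels on the compact interval $\omega$. Tracking the constant through this argument shows that $l$, and hence $\gamma_0=l/2$, depends only on $\omega$ and therefore only on the domain $\om$, as required.
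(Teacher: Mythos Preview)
Your proposal is correct and follows essentially the same route as the paper: the chain \eqref{AcesUp} $\to$ \eqref{f1star} $\to$ conformal transfer to the disc via Proposition~\ref{confexist} $\to$ \eqref{faurep}--\eqref{faurem} $\to$ Proposition~\ref{comphpm} is exactly the argument given, and your identification of the kernel comparison $\sum_{j=1}^4 K_j\ge l\,K_5$ as the crux, with its large-$k$ asymptotic and small-$k$ compactness treatment, matches the paper's proof of Proposition~\ref{comphpm} precisely.
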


\referee
\begin{remark}\emph{We do not know whether $|c|=\sqrt{5}$ is the optimal, i.e. largest possible, constant for which $I(\varphi,\INS{-c}{0}{0}{c} \,) \geq 0$ for all relevant $\varphi$ in $W^{1,2}_0(\om,\R^2)$. The numerical results of Section \ref{numericalresults} suggest that  $c=4$ is optimal:  see Figure \ref{fig:insulation_intro1}(D).}
\end{remark}
\EEE 
The following result establishes that if $\delta f > 4$ then the width of the `insulating' layer cannot be made as small as please.

\begin{proposition}\label{thinmiddle} 
Let $\Omega$ be a domain of the form 
$$\Omega:=L \cup ([0,\delta]\times [0,1]) \cup R,$$
where $L:=[-1,0] \times [0,1]$, $R:=[\delta,\delta+1]\times [0,1]$ and $\delta >0$, let $c>2$, and let 
\begin{align}\label{fff}f=c\chi_L - c\chi_R.\end{align}
Then 
there is a minimum `width' $\delta$ for which the inequality
\begin{align*}
    \int_{\Omega} |\nabla \varphi|^2 + f(x)\det \nabla \varphi \, \dx \geq 0 \quad \quad \varphi \in W_0^{1,2}(\Omega;\R^2)
\end{align*}
holds.
\end{proposition}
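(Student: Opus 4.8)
The plan is to show that (HIM) fails on $\Omega$ whenever the strip width $\delta$ lies below a threshold depending only on $c$; the infimum of the widths consistent with (HIM) is then strictly positive, which is the assertion. The guiding idea is that although a test map concentrated at a single point of just one of the two interfaces $\{x_1=0\}$, $\{x_1=\delta\}$ only experiences a jump of magnitude $c$ — harmless when $c\le 4$ — a test map whose support straddles the entire insulating strip experiences the combined jump $2c>4$, which is incompatible with nonnegativity as in Section~\ref{Sec: two-state f}.

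First I would localise the functional at the interface $\{x_1=0\}$. Put $x_0=(0,\tfrac12)$, take $\psi\in W^{1,2}(\R^2;\R^2)$ with $\spt\psi\subset B(0,\rho)$ and $\rho>1$, and set $\varphi(x):=\psi\big((x-x_0)/\delta\big)$. For $\delta<\tfrac1{2\rho}$ one has $B(x_0,\rho\delta)\subset\Omega$, so $\varphi\in W^{1,2}_0(\Omega;\R^2)$. Since $\int|\nabla w|^2\dx$ and $\int\det\nabla w\dx$ are both invariant under $w\mapsto w(\cdot/\delta)$ in two space dimensions, the change of variables $y=(x-x_0)/\delta$ gives
\[\int_\Omega|\nabla\varphi|^2+f\det\nabla\varphi\dx=\int_{\R^2}|\nabla\psi|^2\dy+c\int_{\{y_1<0\}}\det\nabla\psi\dy-c\int_{\{y_1>1\}}\det\nabla\psi\dy=:\mathcal J(\psi),\]
because $f(x_0+\delta y)$ equals $c$ on $\{y_1<0\}$, $0$ on $\{0<y_1<1\}$ and $-c$ on $\{y_1>1\}$. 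So it suffices to exhibit one compactly supported $\psi$ with $\mathcal J(\psi)<0$; the threshold for $\delta$ is then simply $\tfrac1{2\rho}$ for the radius $\rho$ containing that particular $\psi$.

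The main step — and the point where care is needed — is to construct such a $\psi$, which I would do by rescaling a test map for the two-state problem with the doubled jump. Write $\mathcal J_0(w):=\int_{\R^2}|\nabla w|^2+c\int_{\{y_1<0\}}\det\nabla w-c\int_{\{y_1>0\}}\det\nabla w$; since $\int_{\R^2}\det\nabla w=0$ this equals $\int_{\R^2}|\nabla w|^2+2c\int_{\{y_1<0\}}\det\nabla w$, and as $2c>2c_2=4$, Proposition~\ref{inplusnonnegative} together with Remark~\ref{rem:qcb} (with $M=2c$ and normal $e_1$, after flipping the first component if needed to orient the jump) provides $\psi^{(0)}\in W^{1,2}_0(B(0,1);\R^2)$, extended by zero, with $\mathcal J_0(\psi^{(0)})<0$. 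Set $\psi_\lambda(y):=\psi^{(0)}(y/\lambda)$, supported in $B(0,\lambda)$; scale invariance of the cone $\{y_1<0\}$ gives $\mathcal J_0(\psi_\lambda)=\mathcal J_0(\psi^{(0)})$, so substituting $z=y/\lambda$,
\[\mathcal J(\psi_\lambda)=\mathcal J_0(\psi_\lambda)+c\int_{\{0<y_1<1\}}\det\nabla\psi_\lambda\dy=\mathcal J_0(\psi^{(0)})+c\int_{\{0<z_1<1/\lambda\}}\det\nabla\psi^{(0)}(z)\dz.\]
Since $\det\nabla\psi^{(0)}\in L^1(\R^2)$ and the slabs $\{0<z_1<1/\lambda\}$ decrease to a Lebesgue-null set, dominated convergence forces the last integral to $0$ as $\lambda\to\infty$, hence $\mathcal J(\psi_\lambda)\to\mathcal J_0(\psi^{(0)})<0$; I fix $\lambda$ so large that $\mathcal J(\psi_\lambda)<0$.

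Putting the steps together with $\psi=\psi_\lambda$ and $\rho=\lambda$: for every $\delta<\tfrac1{2\lambda}$ the map $\varphi(x)=\psi_\lambda\big((x-x_0)/\delta\big)$ lies in $W^{1,2}_0(\Omega;\R^2)$ and makes the functional equal to $\mathcal J(\psi_\lambda)<0$, so (HIM) fails; therefore $\inf\{\delta>0:\ \text{(HIM) holds on }\Omega\}\ge\tfrac1{2\lambda}>0$, which is the minimum width claimed. I expect the construction of $\psi$ to be the only real obstacle: the naive point-concentration of Proposition~\ref{islandproblem}, applied at one interface, only yields the insufficient jump $c$, and the rescaling above is precisely what manufactures a single admissible test map that spans the strip while rendering the determinant it contributes over the strip asymptotically negligible. (For $c>4$ the point-concentration at a single interface already shows (HIM) fails for every $\delta$, so the statement carries content for $c\in(2,4]$.)
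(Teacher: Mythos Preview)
Your argument is correct and takes a genuinely different route from the paper's.

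The paper works directly on $\Omega$: it fixes a smooth $\varphi\in W^{1,2}_\Gamma(L;\R^2)$ with $\int_L|\nabla\varphi|^2+c'\det\nabla\varphi<0$ for some $c'\in(2,c)$ (via Proposition~\ref{prop:n_dim_two_state}), then extends it across $\Omega$ by \emph{freezing} the value $\varphi(0,x_2)$ on the strip and reflecting $\varphi(\delta-x_1,x_2)$ onto $R$. The reflection doubles the negative $L$-contribution, while the frozen strip contributes exactly $\delta\int_0^1|\partial_2\varphi(0,x_2)|^2\,dx_2$, so for small $\delta$ the total is negative. The intermediate constant $c'$ is there to produce a strict negative upper bound that the $O(\delta)$ strip term cannot overcome.

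You instead rescale by $\delta$ so that the strip has fixed width $1$ in $y$-coordinates, which reduces the question to finding a single compactly supported $\psi$ with $\mathcal J(\psi)<0$, independent of $\delta$; you then manufacture such a $\psi$ by blowing up a test map for the zero-width problem $\mathcal J_0$ (effective jump $2c>4$, handled by Proposition~\ref{inplusnonnegative}), the slab integral $\int_{\{0<z_1<1/\lambda\}}\det\nabla\psi^{(0)}$ vanishing by dominated convergence. Your approach is more conceptual in that it isolates the limiting functional explicitly and needs only $\det\nabla\psi^{(0)}\in L^1$, whereas the paper's is more hands-on and avoids the two nested rescalings, at the price of requiring $\varphi$ smooth so that the trace $\partial_2\varphi(0,\cdot)$ lies in $L^2$. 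Both arguments encode the same mechanism: once the strip is thin relative to the scale of the test map, its contribution is negligible and the effective jump seen by the map is $2c$.
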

\begin{proof} 
Choose any $c'$ such that $c > c'>2$ and apply Proposition \ref{prop:n_dim_two_state} to deduce that there exists a $\varphi \in W_{\Gamma}^{1,2}(L;\R^2)$ such that 
\begin{align}\label{seeprime} \int_L |\nabla \varphi|^2 + c'\det \nabla \varphi \, \dx < 0,\end{align}
 where $\Gamma:=\{0\} \times [0,1]$.
Without loss, we may assume that $\varphi$ is smooth on $L$.   Using \eqref{seeprime}, a short calculation shows that 
\begin{align}
   \int_L |\nabla \varphi|^2 + c\det \nabla \varphi \, \dx  < -\frac{(c-c')}{c'} \int_{L} |\nabla \varphi|^2 \, \dx.
\end{align}
We now extend $\varphi$ to a map $\Phi$ in $W_0^{1,2}(\Omega;\R^2)$  by setting
\begin{displaymath}
\Phi(x_1,x_2):=\left\{\begin{array}{ll}
    \varphi(x_1,x_2) & \textrm{if} \ x \in L \\[2mm]
     \varphi(0,x_2) & \textrm{if} \ x \in [0,\delta]\times [0,1] 
     \\[2mm]
     \varphi(\delta-x_1,x_2) & \textrm{if} \ x \in R.
\end{array}\right.
\end{displaymath}
It follows that 
\begin{align*}\int_{\Omega} |\nabla \Phi|^2 + f(x) \det \nabla \Phi \, \dx & = 2 \int_L |\nabla \varphi|^2 + c\det \nabla \varphi \dx  + \delta \int_0^1 |\partial_2\varphi(0,x_2)|^2 \, dx_2 \\
& < -\frac{2(c-c')}{c'} \int_{L} |\nabla \varphi|^2 \, \dx + \delta \int_0^1 |\partial_2\varphi(0,x_2)|^2 \, dx_2,
\end{align*}
the right-hand side of which can be made negative by taking $\delta$, which in this setting is the width of the insulation region $\{f=0\}$, sufficiently small.  We conclude that for any $f$ of the form \eqref{fff} with $c>2$, the insulation region cannot be made arbitrarily small.
    \end{proof}

\subsection{The point-contact problem}\label{-4040}

The geometry of the insulation region that featured in the previous example kept the sets $\{f=\pm c\}$ completely separate, and so produced an example of (HIM) in which $\delta f > 4$.  The following example, which explores a different insulation geometry, shows that (HIM) can also be made to hold when the sets $\{f=\pm c\}$ meet in a point and $\delta f=2c>4$.
In fact, we obtain (HIM) when $\delta f = 2\sqrt{8}$, and note that, owing to the zero-homogeneity of the $f$ we consider, (HIM) is equivalent to the sequential weak lower semicontinuity of the associated functional. 

Let $Q=[-1,1]^2$ and let $Q_1,\ldots,Q_4$ be the four `windows' of $Q$ shown in Figure \ref{pic:distribution_Q}.


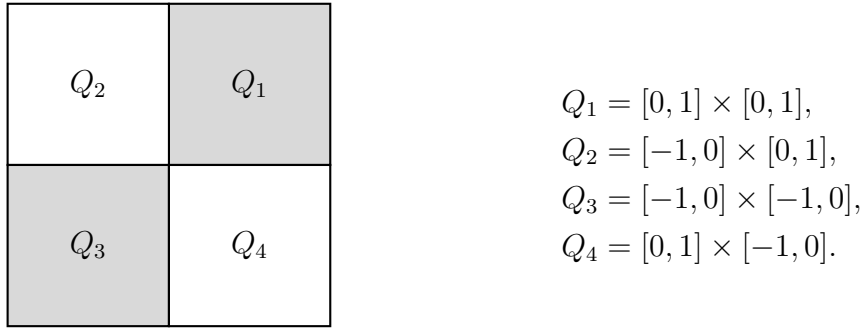
\begin{figure}[H]
\centering
\begin{minipage}{0.39\textwidth}
\centering
\begin{tikzpicture}[scale=2]
\node (A) at (-2,-2) {}; 
\node[right=2 of A.center] (B) {};
\node[right=2 of B.center] (C) {};
\node[right=2 of C.center] (D) {};
\node[above=2 of A.center] (A') {};
\node[above=2 of B.center] (B') {};
\node[above=2 of C.center] (C') {};
\node[above=2 of A'.center] (A'') {};
\node[above=2 of B'.center] (B'') {};
\node[above=2 of C'.center] (C'') {};

\filldraw[thick, top color=gray!30!,bottom color=gray!30!] (B'.center) rectangle node{$Q_1$} (C''.center);

\filldraw[thick, top color=white,bottom color=white] (A'.center) rectangle node{$Q_2$} (B''.center);

\filldraw[thick, top color=gray!30!,bottom color=gray!30!] (A.center) rectangle node{$Q_3$} (B'.center);

\filldraw[thick, top color=white,bottom color=white] (B.center) rectangle node{$Q_4$} (C'.center);

\end{tikzpicture}
\end{minipage}
\begin{minipage}{0.39\textwidth}
\begin{align*}
    Q_1&= [0,1] \times [0,1], \\
    Q_2 & = [-1,0] \times [0,1], \\
    Q_3 & = [-1,0] \times [-1,0], \\
    Q_4 & = [0,1] \times [-1,0].
\end{align*}
\end{minipage}
\caption{Distribution of $Q_1,\ldots,Q_4$.} \label{pic:distribution_Q}
\end{figure}

Denote by 
\begin{align}\label{boxedup}I\left(\varphi, 
\scalebox{0.7}{{\begin{tabular}{|c|c|}
 \hline
$a_2$ &$a_1$ 
\\ \hline
$a_3$ & $a_4$ \\
\hline
\end{tabular}}}\, \right) & =\sum_{i=1}^{4} \int_{Q_i}|\nabla \varphi|^2 + a_i \det \nabla \varphi \dx.
\end{align}

\begin{proposition}\label{foursquares1} Let $Q=[-1,1]^2$.  Then \begin{itemize}
    
    \item[(a)] if $\varphi$ belongs to the closure of the set 
$$
    \mathcal{C}:=\{\varphi \in C_c^{\infty}(Q;\R^2): \ \varphi(0,x_2)=\varphi(0,-x_2) \ \textrm{for} \ |x_2|\leq 1\}
$$
in the $H^1$ norm, then 
$$ I\left(\varphi, 
\scalebox{0.7}{{\begin{tabular}{|c|c|}
 \hline
$0$ &$-4$ 
\\ \hline
$4$ & $0$ \\
\hline
\end{tabular}}}\, \right) \geq 0,$$
and the result is optimal in the sense that it is false when $4$ is replaced with any $k>4$;
\item[(b)] it holds that
$$
I\left(\varphi,  
\scalebox{0.7}{{\begin{tabular}{|c|c|}
 \hline
$0$ &$-c$ 
\\ \hline
$c$ & $0$ \\
\hline
\end{tabular}}}\, \right)  \geq 0 \quad \quad \quad \forall \varphi \in W^{1,2}_0(Q;\R^2)
$$
iff $|c|\leq \sqrt{8}$.
\end{itemize}

\end{proposition}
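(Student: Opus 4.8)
The plan is to treat both parts together for as long as possible, reducing to harmonic competitors and then to a boundary (Fourier) problem, in the spirit of Section \ref{insuproper}. Write $I_c$ for the functional in \eqref{boxedup} with $(a_1,a_2,a_3,a_4)=(-c,0,c,0)$, so part (b) asks for $I_c\ge 0$ on $W^{1,2}_0(Q;\R^2)$ precisely when $|c|\le\sqrt 8$, and part (a) asks for $I_4\ge 0$ on $\overline{\mathcal{C}}$. First I would use that $F\mapsto\det F$ is a null Lagrangian: $\int_{Q_i}\det\nabla\varphi\dx$ depends only on $\varphi|_{\partial Q_i}$, so replacing $\varphi$ on each window $Q_i$ by the harmonic map with the same boundary trace leaves every determinant term fixed and does not increase any $\int_{Q_i}|\nabla\varphi|^2\dx$; it therefore suffices to prove (a) and (b) for $\varphi$ harmonic on each $Q_i$, and this reduction preserves membership of $\overline{\mathcal{C}}$. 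Writing $\varphi|_{Q_i}=\Phi_i+\overline{\Psi_i}$ with $\Phi_i,\Psi_i$ holomorphic, and using $|\nabla\varphi|^2=2(|\Phi_i'|^2+|\Psi_i'|^2)$, $\det\nabla\varphi=|\Phi_i'|^2-|\Psi_i'|^2$, one obtains
\begin{multline*}
I_c(\varphi)=(2-c)\int_{Q_1}|\Phi_1'|^2\dx+(2+c)\int_{Q_1}|\Psi_1'|^2\dx
+(2+c)\int_{Q_3}|\Phi_3'|^2\dx+(2-c)\int_{Q_3}|\Psi_3'|^2\dx\\
+2\sum_{i\in\{2,4\}}\int_{Q_i}\big(|\Phi_i'|^2+|\Psi_i'|^2\big)\dx,
\end{multline*}
so that for $c>2$ the only negative contributions are $(2-c)\int_{Q_1}|\Phi_1'|^2\dx$ and $(2-c)\int_{Q_3}|\Psi_3'|^2\dx$.

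Second, I would pass to the boundary. The traces of $\varphi$ on the four coupling edges $E_{12}=\overline{Q_1}\cap\overline{Q_2}$, $E_{23}$, $E_{34}$, $E_{41}$, together with $\varphi|_{\partial Q}=0$, determine $\varphi$ completely; in part (a) the symmetry $\varphi(0,x_2)=\varphi(0,-x_2)$ additionally forces the trace on $E_{34}$ to be the reflection through the origin of the trace on $E_{12}$, reducing to three independent edge traces instead of four. Mapping each square $Q_i$ conformally onto the unit disc and invoking conformal invariance of the Dirichlet integral, each $\int_{Q_i}|\Phi_i'|^2\dx$ and $\int_{Q_i}|\Psi_i'|^2\dx$ becomes a multiple of the squared $H^{1/2}$-seminorm of the corresponding boundary trace, hence a sum $\sum_k k(A_k^2+B_k^2)$ over Fourier modes by Lemma \ref{hitchlem}, while the remaining terms of $I_c$ become bilinear forms in these Fourier data. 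This recasts $I_c\ge 0$ as a coercivity inequality among the Fourier coefficients of the edge traces.

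Third comes the crux: showing that the positive Dirichlet contributions — the full energies on $Q_2,Q_4$ plus the ``good'' halves on $Q_1,Q_3$ — dominate $(c-2)\big(\int_{Q_1}|\Phi_1'|^2\dx+\int_{Q_3}|\Psi_3'|^2\dx\big)$, with the sharp threshold being $c=\sqrt 8$ with no constraint and $c=4$ under the extra even symmetry, the latter larger because that symmetry removes exactly the modes on which the worst case is attained. This is the analogue here of the comparison of problems $\oplus$ and $\ominus$ carried out in Proposition \ref{comphpm}, and I would try to adapt the kernel estimates used there; the essential difficulty is that the linear relations coupling neighbouring windows (trace matching across the $E_{ij}$ together with $\varphi|_{\partial Q}=0$) are not diagonal in the Fourier basis. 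I expect this step — and in particular producing the \emph{sharp} values $\sqrt 8$ and $4$ rather than merely some admissible constant, which may require the explicit conformal maps of the squares, or an unfolding such as $z\mapsto z^2$ near the common corner of $Q_1$ and $Q_3$ — to be the main obstacle.

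Finally, optimality and the converse. For the ``only if'' half of (b), when $|c|>\sqrt 8$ I would produce $\varphi\in W^{1,2}_0(Q;\R^2)$ with $I_c(\varphi)<0$ by a point concentration at the corner shared by $Q_1$ and $Q_3$: take a map that is conformal on the cone $Q_1$ and anticonformal on the cone $Q_3$ (harmonic on $Q_2,Q_4$), so that its integrand on $Q_1\cup Q_3$ equals $-(\tfrac{c}{2}-1)|\nabla\varphi|^2$, push its singularity towards the origin and cut it off to land in $W^{1,2}_0(Q;\R^2)$, then rescale — exactly the mechanism used in the proofs of Proposition \ref{prop:n_dim_two_state} and Proposition \ref{islandproblem}; a blow-up at the corner also confirms this is the extremal behaviour, linking back to Step 3. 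Evaluating $I_c$ on this family and letting the singularity approach the corner gives a strictly negative value once $c>\sqrt 8$; running the same construction but keeping it inside $\overline{\mathcal{C}}$ shows that $k=4$ cannot be improved in (a). Sufficiency in (b) for $|c|\le\sqrt 8$, and in (a) for $c=4$, is the content of Step 3.
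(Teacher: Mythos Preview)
Your proposal has a real gap at exactly the point you flag as ``the crux'': Step~3 is not carried out, and the route you sketch (conformal maps of the squares, boundary Fourier series, kernel estimates in the style of Proposition~\ref{comphpm}) would be laborious and does not obviously yield the sharp constants $\sqrt{8}$ and $4$.

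The key idea you miss is the decomposition $\varphi=f+g$ into its \emph{even} and \emph{odd} parts under $x\mapsto -x$, not the holomorphic/antiholomorphic splitting. Because the nonzero coefficients $\pm c$ sit at diagonally opposite quadrants, parity forces $\int_{Q_3}\det\nabla f=\int_{Q_1}\det\nabla f$, $\int_{Q_3}\det\nabla g=\int_{Q_1}\det\nabla g$ and $\int_{Q_3}\cof\nabla f\cdot\nabla g=-\int_{Q_1}\cof\nabla f\cdot\nabla g$, so that
\[
I_c(\varphi)=2\int_{Q_1\cup Q_2}\big(|\nabla f|^2+|\nabla g|^2\big)\dx-2c\int_{Q_1}\cof\nabla f\cdot\nabla g\dx.
\]
Since $f$ is even, $f(x_1,0)=f(-x_1,0)$, and by harmonicity $f|_{Q_2}(x)=f|_{Q_1}(-x_1,x_2)$, giving $\int_{Q_2}|\nabla f|^2=\int_{Q_1}|\nabla f|^2$. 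In part~(a) the extra condition $\varphi(0,x_2)=\varphi(0,-x_2)$ forces $g(0,x_2)=0$, and the same reflection argument gives $\int_{Q_2}|\nabla g|^2=\int_{Q_1}|\nabla g|^2$; with $c=4$ the functional becomes the exact square $4\int_{Q_1}|\nabla g-\cof\nabla f|^2\ge 0$. In part~(b) one simply drops $\int_{Q_2}|\nabla g|^2$ and completes the square with a free parameter, producing the prefactor $4\rho^2+2-2c\rho$, nonnegative for all $\rho$ iff $c\le\sqrt8$. No Fourier analysis is needed.

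On optimality and necessity: your corner-singularity construction for part~(a) is over-engineered. The paper just takes $\varphi$ supported in a small ball in $Q_1\cup Q_4$ away from the $x_2$-axis (so trivially in $\mathcal{C}$) and invokes the two-state result of Proposition~\ref{islandproblem}, which already gives failure for any $k>4$. For the ``only if'' in part~(b), the paper does not use a singular map that is conformal on $Q_1$ and anticonformal on $Q_3$ --- making such a map match across the interfaces is not obvious --- but instead builds an explicit sequence via Fourier series on the squares (Lemmas~\ref{personanongrata} and~\ref{veryfast}) whose energy behaves like $4\pi(1+2\rho^2-c\rho)\ln N+O(1)$, negative for suitable $\rho$ once $c>\sqrt8$.
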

\begin{proof}
\noindent \textbf{(a)}  Let $\varphi$ belong to $\mathcal{C}$.  We may assume that $\varphi$ is harmonic in each of $Q_1, \ldots Q_4$.  Write $\varphi= f + g$, where $f$ is the even part of $\varphi$ and $g$ the odd part.  Then $\nabla f$ is odd and $\nabla g$ is even, and in particular
\begin{align*}
    \int_Q \nabla f \cdot \nabla g \dx & = 0 \\
    \int_{Q_3} \cof \nabla f \cdot \nabla g \dx &  = - \int_{Q_1} \cof \nabla f \cdot \nabla g \dx \\
    \int_{Q_3} \det \nabla f \dx & =  \int_{Q_1} \det \nabla f \dx \\
     \int_{Q_3} \det \nabla g \dx & =  \int_{Q_1} \det \nabla g \dx. 
\end{align*}
Hence 
\begin{align}
    I\left(\varphi, 
\scalebox{0.7}{{\begin{tabular}{|c|c|}
 \hline
$0$ &$-4$ 
\\ \hline
$4$ & $0$ \\
\hline
\end{tabular}}}\, \right) & = 2 \int_{Q_1\cup Q_2} |\nabla f|^2+|\nabla g|^2 \dx - 4\int_{Q_1} \det \nabla f + \det \nabla g + \cof \nabla f \cdot \nabla g \dx \nonumber  \\& \quad \quad \quad \quad \quad \quad \quad \quad \quad \quad \quad \quad \quad
+ 4 \int_{Q_3} \det \nabla f + \det \nabla g + \cof \nabla f \cdot \nabla g \dx \nonumber \\
& =  2 \int_{Q_1\cup Q_2} |\nabla f|^2+|\nabla g|^2 \dx - 8 \int_{Q_1} \cof \nabla f \cdot \nabla g \dx. \label{NationalExpress}
\end{align}
Since $f\arrowvert_{Q_1}$ is harmonic and $f(-x_1,0)=f(x_1,0)$ for $|x_1| \leq 1$, it must be that $f\arrowvert_{Q_2}$ agrees with 
$$f^*(x_1, x_2):=f(-x_1,x_2) \quad \quad \quad (x_1,x_2) \in Q_2$$ and in particular 
$$\int_{Q_2} |\nabla f|^2 \dx= \int_{Q_2} |\nabla f^*|^2 \dx = \int_{Q_1} |\nabla f|^2 \dx .$$ A similar argument applies to $g$, but here we set $g^*$ in $Q_2$ to be 
$$g^*(x_1,x_2):=-g(-x_1,x_2) \quad \quad \quad (x_1,x_2) \in Q_2.$$ We see that, because $g$ is odd, $g^*(x_1,0)=g(x_1,0)$ and, owing to the requirement that $\varphi(0,x_2)=\varphi(0,-x_2)$,  $g^*(0,x_2)=g(0,x_2)=0$. It follows that $g^*$ and $g$ must agree on $Q_2$, and hence that 
$$\int_{Q_2} |\nabla g|^2 \dx =\int_{Q_2} |\nabla g^*|^2 \dx =\int_{Q_1} |\nabla g|^2 \dx.$$
Thus
\begin{align}
    I\left(\varphi, 
\scalebox{0.7}{{\begin{tabular}{|c|c|}
 \hline
$0$ &$-4$ 
\\ \hline
$4$ & $0$ \\
\hline
\end{tabular}}}\, \right) & =4 \int_{Q_1} |\nabla f|^2+|\nabla g|^2 \dx - 2\, \cof \nabla f \cdot \nabla g \dx \nonumber \\
& = 4 \int_{Q_1} |\nabla g - \cof \nabla f|^2 \dx,
\end{align}
which is always non-negative.  \referee Now suppose that the entries $\pm 4$ are replaced by $\pm k$ for some $k>4$.  Fix a radius $r<\frac{1}{4}$, let $x_0=(\frac{1}{2},0)$ and note that the ball $\overline{B(x_0,r)}$ is strictly contained in $Q_1 \cup Q_4$. Applying Proposition \ref{islandproblem} with $n=2$, choose  $\varphi \in W_0^{1,2}(B(x_0,r),\R^2)$ such that 
\begin{align*}
    \int_{Q_1 \cup Q_4}|\nabla \varphi|^2 - k \det \nabla \varphi \, \chi_{Q_1}\dx < 0.\end{align*}
Extend $\varphi$ to $\tilde{\varphi} \in W_0^{1,2}(Q,\R^2)$ by setting $\tilde{\varphi}=0$ on $Q \setminus \overline{B(x_0,r)}$. Then by suitably mollifying $\tilde{\varphi}$, but without relabelling, we obtain $\tilde{\varphi} \in \mathcal{C}$ and 
\begin{align*}
   I\left(\tilde{\varphi}, 
\scalebox{0.7}{{\begin{tabular}{|c|c|}
 \hline
$0$ &$-k$ 
\\ \hline
$k$ & $0$ \\
\hline
\end{tabular}}}\, \right) & < 0. 
\end{align*}
Hence the constant $k=4$ is optimal in this case. \EEE

\noindent \textbf{(b) } We prove first that $|c| \leq \sqrt{8}$ is sufficient.  By replacing $\varphi$ with $J\varphi$ if necessary, we may assume that $c\geq 0$. Arguing to start with as in part (a), we find that
\begin{align*}  \small{I\left(\varphi,  
\scalebox{0.7}{{\begin{tabular}{|c|c|}
 \hline
$0$ &$-c$ 
\\ \hline
$c$ & $0$ \\
\hline
\end{tabular}}}\, \right)} & = 2\int_{Q_1 \cup Q_2} |\nabla g|^2 \dx +  4\int_{Q_1} |\nabla f|^2 \dx -2c\int_{Q_1}  \cof \nabla f \cdot \nabla g \dx \\
& \geq \int_{Q_1} 2 |\nabla g|^2 + 4 |\nabla f|^2 + c\left|\frac{\nabla f}{\lambda} - \lambda \, \cof \nabla g \right|^2 -c \left|\frac{\nabla f}{\lambda}\right|^2 - 
c \left|\lambda \, \cof \nabla g\right|^2
\dx \\
& \geq \int_{Q_1} 2 |\nabla g|^2 + 4 |\nabla f|^2 -c \left|\frac{\nabla f}{\lambda}\right|^2 - 
c \left|\lambda \, \cof \nabla g\right|^2
\dx,
\end{align*}
with equality (in the last line) iff $\nabla f = \lambda^2 \, \cof \nabla g$.  Setting $\rho=\lambda^2$ and replacing terms in $\nabla f$ with suitable terms in $\nabla g$, we obtain the lower bound
\begin{align*}  \small{I\left(\varphi,  
\scalebox{0.7}{{\begin{tabular}{|c|c|}
 \hline
$0$ &$-c$ 
\\ \hline
$c$ & $0$ \\
\hline
\end{tabular}}}\, \right)} & \geq  \int_{Q_1} (4\rho^2 + 2 - 2c\rho)|\nabla g|^2 \dx, 
\end{align*}
which is nonnegative for all $\rho$ if $c \leq \sqrt{8}$.

The necessity of $|c|\leq \sqrt{8}$ follows by applying Lemmas \ref{personanongrata} and \ref{veryfast} below.  
\end{proof}

\begin{lemma}\label{personanongrata} 
Let $Q_1$ be the unit square in $\R^2$, let $N$ be a large natural number, and let $A$, $B$ be harmonic functions on $Q_1$ obeying the following boundary conditions 
\begin{displaymath}
\begin{array}{l l l}
A(1,x_2)  = 0 & \quad B(1,x_2)=0 & \quad 0 \leq x_2 \leq 1 \\
A(x_1,1)  = 0 & \quad B(x_1,1)=0 & \quad 0 \leq x_1 \leq 1 \\
A(0,x_2) = 0  & \quad B(0,x_2)=g_0(0,x_2) & \quad 0 \leq x_2 \leq 1 \\
A(x_1,0)  = g_0(x_1,0) & \quad B(x_1,0)=0 & \quad 0 \leq x_1 \leq 1,
\end{array}
\end{displaymath}
where the function $g_0$ is such that
\begin{align*}
g_0(x_1,0) & =\sum_{n=1}^N -\frac{1}{n} \sin(n \pi x_1) \quad \quad 0 \leq x_1 \leq 1 \\
g_0(0,x_2) & = \sum_{n=1}^N \frac{1}{n} \sin(n \pi x_2) \quad \quad 0 \leq x_2 \leq 1.
\end{align*}
Let 
\begin{align*} g(x)=\left\{\begin{array}{l l} (B(x)+A(x))e_1 & \textrm{if} \ x \in Q_1 \\
(B(R x)-A(R x))e_1 & \textrm{if} \ x \in Q_2,  \end{array} \right.
\end{align*}
where $R(x_1,x_2)=(-x_1,x_2)$. 
Then, as $N \to \infty$, 
\begin{align}\label{logN}
\mathbb{D}(g;Q_1) & = 2\pi \ln N+O(1) \\
\label{notlogN} \mathbb{D}(g;Q_2) & = O(1).
\end{align}
\end{lemma}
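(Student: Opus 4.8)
The plan is to reduce both energies to explicit scalar quantities on the single square $Q_1$ and then extract their logarithmic growth. Since $g$ is $e_1$-valued, setting $u:=A+B$ and $v:=B-A$ we have $\mathbb{D}(g;Q_1)=\int_{Q_1}|\nabla u|^2\dx$, and since $R$ is an orthogonal map with $R(Q_2)=Q_1$, the change of variables $x\mapsto Rx$ gives $\mathbb{D}(g;Q_2)=\int_{Q_1}|\nabla v|^2\dx$. Expanding the squares, everything is governed by the three numbers
\[ a_N:=\int_{Q_1}|\nabla A|^2\dx,\qquad b_N:=\int_{Q_1}|\nabla B|^2\dx,\qquad c_N:=\int_{Q_1}\nabla A\cdot\nabla B\dx, \]
through $\mathbb{D}(g;Q_1)=a_N+b_N+2c_N$ and $\mathbb{D}(g;Q_2)=a_N+b_N-2c_N$.

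For the diagonal terms I would solve for $A$ and $B$ by separation of variables: the boundary mode $\sin(n\pi x_1)$ on $\{x_2=0\}$, with zero data on the other three sides of $Q_1$, extends to $\sin(n\pi x_1)\,\sinh\!\big(n\pi(1-x_2)\big)/\sinh(n\pi)$, and $B$ is the analogous series in the variable $x_1$. These modes are mutually orthogonal in $H^1(Q_1)$ because $\{\cos(n\pi\,\cdot)\}$ and $\{\sin(n\pi\,\cdot)\}$ are orthogonal on $(0,1)$, and a one-line integration by parts in the transverse variable shows the $n$-th mode carries Dirichlet energy $\tfrac12 n\pi\coth(n\pi)$. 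Hence $a_N=b_N=\tfrac{\pi}{2}\sum_{n=1}^N \coth(n\pi)/n$, and as $\coth(n\pi)=1+O(e^{-2n\pi})$ this equals $\tfrac{\pi}{2}\ln N+O(1)$.

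The cross term is where the real content lies. Using Green's identity, together with the facts that $A$ is harmonic and $B$ vanishes on three of the four sides of $\partial Q_1$, one has $c_N=\int_{\{x_1=0\}}B\,\partial_\nu A\, \md s$, an integral over the single remaining edge. Substituting the two series and using the elementary identity $\int_0^1\sin(m\pi y)\,\sinh\!\big(n\pi(1-y)\big)\dy=\frac{m\sinh(n\pi)}{\pi(m^2+n^2)}$, all the normalising factors cancel and one is left with the clean formula $c_N=\sum_{m,n=1}^{N}\frac{1}{m^2+n^2}$. Comparing this double sum with $\iint_{[1,N+1]^2}(x^2+y^2)^{-1}\dx\dy$ — the discrepancy being $O(1)$ since $\nabla(x^2+y^2)^{-1}$ is integrable on $[1,\infty)^2$ — and evaluating the integral in polar coordinates gives $c_N=\tfrac{\pi}{2}\ln N+O(1)$.

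Combining the three estimates yields $\mathbb{D}(g;Q_1)=a_N+b_N+2c_N=2\pi\ln N+O(1)$ and $\mathbb{D}(g;Q_2)=a_N+b_N-2c_N=O(1)$, which are precisely \eqref{logN} and \eqref{notlogN}. The separation of variables and the $\coth$ asymptotics are routine; the substantive steps are (i) identifying the cross term, via the correct integration by parts, with $\sum_{m,n=1}^N 1/(m^2+n^2)$ — here it is essential that $B$ is supported on exactly one edge of $\partial Q_1$, so that only one boundary integral survives — and (ii) the asymptotic evaluation of that double sum. I expect (ii), together with the bookkeeping of the $O(1)$ remainders, to be the main obstacle: because $\mathbb{D}(g;Q_2)$ is the difference of two quantities each of size $\pi\ln N$, one must genuinely control the error terms in the first two steps, not merely their leading behaviour.
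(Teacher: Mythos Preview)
Your proposal is correct and follows essentially the same route as the paper. Both arguments obtain the explicit sine--sinh series for $A$ and $B$, compute $a_N=b_N=\tfrac{\pi}{2}\sum_{n=1}^N \coth(n\pi)/n=\tfrac{\pi}{2}\ln N+O(1)$, and identify the cross term with the lattice sum $c_N=\sum_{m,n=1}^N (m^2+n^2)^{-1}$ via an integration by parts that exploits the harmonicity of $A$ and the fact that $B$ is supported on a single edge of $\partial Q_1$. The only substantive difference is in how the lattice sum is estimated: the paper uses the contour-integral identity $\sum_{m\ge 1}(m^2+n^2)^{-1}=\tfrac{\pi n\coth(\pi n)-1}{2n^2}$ and then sums over $n$, whereas you compare directly with $\iint_{[1,N]^2}(x^2+y^2)^{-1}\,dx\,dy$; both yield $\tfrac{\pi}{2}\ln N+O(1)$, and your approach is arguably more elementary.
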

\begin{proof} By a direct calculation, one finds that 
\begin{align}\label{a}
    A(x_1,x_2) & = \sum_{k=n\pi,n \in \N}\alpha_k \sin(kx_1)\sinh(k(x_2-1)), 
    \end{align}
where 
\begin{align*}
    \alpha_k & = \frac{2\int_0^1 g_0(s,0)\sin(ks) \, ds}{\sinh(-k)}=\left\{\begin{array}{l l} \frac{1}{n\sinh(n\pi)} & \textrm{if} \ 1 \leq n \leq N  \\
0 & \textrm{otherwise}.\end{array}\right.\end{align*}
Hence,
\begin{align*}
A(x_1,x_2) & = \sum_{n=1}^N \frac{ \sin(n \pi x_1)\sinh(n \pi (x_2-1))}{n\sinh(n\pi)}. 
\end{align*} 
Similarly, 
\begin{align}\label{b}
    B (x_1,x_2) & =\sum_{n=1}^N \frac{ -\sin(n \pi x_2)\sinh(n \pi (x_1-1))}{n\sinh(n\pi)}.\end{align}
(Note that $B(x_1,x_2)=-A(x_2,x_1)$.) Using these expressions together with the fact that $A$ and $B$ are harmonic, we find that 
 \begin{align}\nonumber \int_{Q_1} \nabla A \cdot \nabla B \dx  & = \sum_{n,m=1}^N\frac{-nm}{n^2+m^2}\alpha_{n\pi}\beta_{m\pi}\sinh(m \pi)\sinh(n\pi), \\
\nonumber & = \sum_{n,m=1}^N \frac{1}{n^2+m^2} \\
\nonumber & = \sum_{n=1}^N \frac{\pi n \coth(\pi n) - 1}{2n^2} + O(1) \\
\nonumber & = \sum_{n=1}^N \frac{\pi\coth(\pi n)}{2n} + O(1) \\
\label{iplog}& = \frac{\pi \ln N}{2} + O(1)  \quad \textrm{as} \ N \to \infty.
\end{align}
In the course of the above, the fact that 
\begin{align*}
    \sum_{m =1}^N \frac{1}{m^2+n^2} = \frac{\pi n \coth(\pi n) - 1}{2 n^2} + O(1/N),
\end{align*}
which can be deduced by integrating the function $h(z):=\frac{\cot(\pi z)}{z^2+n^2}$ around a rectangular contour with corners at $\pm(N+\frac{1}{2})\pm 2Ni$, has been used. 
Similarly, \begin{align}\nonumber
\mathbb{D}(B;Q_1)=\mathbb{D}(A;Q_1) & = \sum_{n=1}^N  \frac{n\pi \, \alpha_{n\pi}^2\sinh(2n\pi)}{4} \\
\nonumber & = \sum_{n=1}^N \pi \frac{n \sinh(2n\pi)}{4 n^2 \sinh^2(n\pi)} \\
\nonumber & = \sum_{n=1}^N \frac{ \pi \coth(\pi n)}{2n} \\
\label{dirABlog} & = \frac{\pi \ln N}{2} +O(1)  \quad \textrm{as} \ N \to \infty.
\end{align}

To conclude the proof, we see from the fact that $g=A+B$ in $Q_1$, \eqref{iplog} and \eqref{dirABlog} that, as $N \to \infty$,
\begin{align*} \mathbb{D}(g;Q_1)=2\pi \ln N + O(1)
\end{align*} 
On $Q_2$, $g(x)=B(Rx)-A(Rx)$, and hence 
\begin{align}\label{dirgq2} \mathbb{D}(g;Q_2)=2\left(\mathbb{D}(A;Q_1)-\int_{Q_2} \nabla \eta \cdot \nabla \zeta \dx\right),
\end{align} 
where $\eta(x):=B(Rx)$ and $\zeta(x):=A(Rx)$.  We calculate  that 
\begin{align*} \int_{Q_2} \nabla \eta \cdot \nabla \zeta \dx & = \sum_{n,m=1}^N \frac{1}{n^2+m^2}, 
\end{align*}
which, by the argument leading to \eqref{iplog}, goes as $\frac{\pi \ln N}{2} +O(1)$  as $N \to \infty.$ Inserting this and \eqref{dirABlog} into \eqref{dirgq2} leads to the conclusion that $\mathbb{D}(g;Q_2)=O(1)$ as $N \to \infty$. 
\end{proof}

In the following, for $S \subset \R^2$ we let $S_1$ and $(S)_1$ refer to the part of $S$ that lies in the first quadrant, $S_2$ and $(S)_2$ to the part that lies in the second quadrant, and so on. 

\begin{lemma}\label{veryfast} Let $r>1$, $\rho>0$ and $N \in \N$, let $Q=[-1,1]^2$ and set $rQ= \{rq: q \in Q\}$.  Then there exists $\Phi \in H_0^1(rQ,\R^2)$ such that 
\begin{align} \label{expansion} \small{I\left(\Phi,  \scalebox{0.7}{{\begin{tabular}{|c|c|}
 \hline
$0$ &$-c$ 
\\ \hline
$c$ & $0$ \\
\hline
\end{tabular}}}\, \right)} 
& = 4\pi (1+ 2\rho^2 - c\rho)\ln N + O(1)
\end{align}
as $N \to \infty$.  In particular, $|c|\leq\sqrt{8}$ is necessary for the everywhere positivity of the functional in the left-hand side of \eqref{expansion}.
\end{lemma}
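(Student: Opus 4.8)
The plan is to exhibit $\Phi$ built from the field $g$ of Lemma~\ref{personanongrata} together with a companion field $f$ chosen so that, on the first quadrant, $\nabla f=\rho\,\cof\nabla g$; with this choice every inequality used in the sufficiency direction of Proposition~\ref{foursquares1}(b) becomes an (asymptotic) equality. Recall that $g=(g^1,g^2)$ on $Q_1\cup Q_2$ has $g^2\equiv 0$, $g^1$ harmonic in $Q_1$ and in $Q_2$ and vanishing on the outer boundary $\{x_1=\pm 1\}\cup\{x_2=1\}$ of $[-1,1]\times[0,1]$, that the entire Dirichlet energy of $g$ concentrates at the central point $(0,0)$ at which the four windows meet, and that $\D(g;Q_1)=2\pi\ln N+O(1)$ while $\D(g;Q_2)=O(1)$. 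By the symmetry $\Phi(x)\mapsto\Phi(-x)$, which interchanges $Q_1$ and $Q_3$, we may assume $c\ge 0$ throughout.

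First I would build the companion field $f$. Since $g^1$ is harmonic and $g^2\equiv 0$, $\cof\nabla g$ is curl-free on the simply connected set $Q_1$, so there is a harmonic $h$ with $\nabla h=\cof\nabla g$ on $Q_1$; because $\cof$ is a Frobenius isometry of $\R^{2\times 2}$ one has $|\nabla h|=|\nabla g|$ pointwise, hence $\D(h;Q_1)=\D(g;Q_1)$. Put $f:=\rho\,h$ on $Q_1$, so that $\nabla f=\rho\,\cof\nabla g$ and therefore $\cof\nabla f=\rho\,\nabla g$ there. Extend $f$ to $Q_2$ as the $x_1$-reflection of $f|_{Q_1}$ (continuous across $\{x_1=0\}$, with $\D(f;Q_2)=\D(f;Q_1)=\rho^2\D(g;Q_1)$), and then extend $f$ and $g$ to all of $Q$ by declaring $f$ even and $g$ odd under $x\mapsto-x$; one checks that the glued maps lie in $H^1(Q;\R^2)$ and that $g$ vanishes on $\partial Q$. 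Finally, since $f$ need not vanish on $\partial Q$, I would extend $f$ evenly across the collar $rQ\setminus Q$ to zero on $\partial(rQ)$, keep $g\equiv 0$ there, and set $\Phi:=f+g\in H^1_0(rQ;\R^2)$.

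Next I would evaluate the functional appearing on the left of \eqref{expansion}, henceforth abbreviated $I(\Phi)$. With $\Phi=f+g$, $f$ even and $g$ odd under $x\mapsto-x$, the cross term $\int_{rQ}\nabla f\cdot\nabla g$ vanishes; using $Q_3=-Q_1$, $Q_4=-Q_2$ and the parities ``$\det\nabla f$ and $\det\nabla g$ even, $\cof\nabla f\cdot\nabla g$ odd'', the weights $(0,-c,c,0)$ collapse the determinant contributions to give
\[
I(\Phi)=2\int_{Q_1\cup Q_2}\bigl(|\nabla f|^2+|\nabla g|^2\bigr)\dx-2c\int_{Q_1}\cof\nabla f\cdot\nabla g\,\md x+O(1),
\]
the $O(1)$ being the cost of the collar. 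Inserting $|\nabla f|^2=\rho^2|\nabla g|^2$ and $\cof\nabla f\cdot\nabla g=\rho|\nabla g|^2$ on $Q_1$, $\D(f;Q_2)=\rho^2\D(g;Q_1)$ and $\D(g;Q_2)=O(1)$, everything is carried by $\D(g;Q_1)$, giving
\[
I(\Phi)=(4\rho^2+2-2c\rho)\,\D(g;Q_1)+O(1)=2\pi(4\rho^2+2-2c\rho)\ln N+O(1),
\]
which is \eqref{expansion}.

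For the final assertion, suppose $I\ge 0$ on $H^1_0(rQ;\R^2)$. Fixing $\rho>0$ and letting $N\to\infty$ in \eqref{expansion} forces $1+2\rho^2-c\rho\ge 0$ for every $\rho>0$; minimising the left-hand side over $\rho>0$ (recall $c\ge 0$) yields $1-c^2/8\ge 0$, i.e.\ $|c|\le\sqrt 8$. The step I expect to be the main obstacle is the collar estimate: the fields carry Dirichlet energy of order $\ln N$ on $Q_1$, so one must verify that extending $\Phi$ to zero across $rQ\setminus Q$ costs only $O(1)$, not a further multiple of $\ln N$. This rests on the fact that $g^1$, and hence $h$, is real-analytic with bounds independent of $N$ on any region bounded away from the centre $(0,0)$ where its energy concentrates, so that the boundary trace of $\Phi$ on $\partial Q$ is bounded in $H^{1/2}(\partial Q)$ uniformly in $N$; the parity identities and the curl-free construction of $h$ are then routine.
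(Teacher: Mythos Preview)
Your proposal is correct and follows essentially the same route as the paper's proof: build the companion field $f$ on $Q_1$ via $\nabla f=\rho\,\cof\nabla g$ (the paper writes this down explicitly as $f_2(x)=\rho(u_N(x_1,x_2)+u_N(x_2,x_1)-\text{const})$, which is exactly the harmonic conjugate you obtain abstractly), reflect to $Q_2$, extend even/odd to all of $Q$, and cut off across a collar. Your even/odd identity and the subsequent arithmetic match the paper's Step~3 line for line.

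The one point where the paper is more careful is precisely the collar estimate you flag. Rather than invoking an $H^{1/2}$ trace bound, the paper computes $f_{2,1}(x_1,1)$ directly from the series for $A$ and $B$, observes that the summands decay like $e^{-\pi n}$ uniformly in $x_1$, and then uses a one-dimensional Poincar\'{e} inequality together with the explicit linear cutoff $a(s)=(r-s)/\eps$ to bound $\D(\tilde f,(rQ)_1\setminus Q_1)$ by an $N$-independent constant. Your heuristic (``real-analytic with bounds independent of $N$ away from the origin'') is right, but to make it rigorous you would still need to appeal to the explicit exponential decay in the formulas of Lemma~\ref{personanongrata}, since that is what guarantees the uniform-in-$N$ bound on $f$ and its tangential derivative along $\partial Q$; a generic harmonic extension argument alone does not give this without knowing the boundary data is controlled.
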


\begin{proof} 
\noindent \textbf{Step 1.}  Let $g: Q_1 \cup Q_2 \to \R^2$ be the function constructed in Lemma \ref{personanongrata}, and let $f: Q_1 \to \R^2$ be given by $f(x)=f_2(x) e_2$, where 
$$f_2(x)=\rho\left(u_{\scriptscriptstyle{N}}(x_1,x_2)+u_{\scriptscriptstyle{N}}(x_2,x_1)-2 \sum_{n=1}^N \frac{(-1)^n}{n \sinh(\pi n)}\right)$$
and
$$u_{\scriptscriptstyle{N}}(x_1,x_2) = \sum_{n=1}^N \frac{\cos(n\pi x_1)\cosh(n\pi(x_2-1))}{n \sinh(\pi n)}.$$
Then \begin{align*}\nabla f = \rho \, \cof \nabla g \quad  \textrm{in} \ Q_1. \end{align*}Define
\begin{align}\label{tildef}
\tilde{f}(x_1,x_2) = \left\{\begin{array}{l l } f(x_1,x_2) &  \textrm{if} \ 0 \leq x_1 \leq 1, \ 0 \leq x_2 \leq 1 \\
f(x_1,1)a(x_2) &  \textrm{if} \ 0 \leq x_1 \leq 1, \ 1 \leq x_2 \leq r \\
f(1,x_2)a(x_1) &  \textrm{if} \ 1 \leq x_1 \leq r, \ 0 \leq x_2 \leq 1 \\
0 & \textrm{otherwise}.
\end{array} \right.
\end{align}
Here, $a(s)=\frac{r-s}{\eps}$ where $r=1+\eps$ defines $\eps>0$.  Then, by a direct calculation,  
\begin{align}\label{greatscott}\mathbb{D}(\tilde{f},(rQ)_1\setminus Q_1) = \frac{2}{\eps}\int_0^1 f_2^2(x_1,1) \dx+ \frac{2\eps}{3}\int_0^1 f^2_{_{2,1}}(x_1,1) \dx_1.
\end{align}
In the following, let $s_n=\sinh(n \pi)$.  Then
\begin{align*} f_{_{2,1}}(x_1,1)=\pi \rho \left(\sum_{n=1}^N \frac{(-1)^n \sinh(\pi n (x_1-1))}{s_n} -\frac{\sin(\pi n x_1)}{s_n}\right).  
\end{align*}
Both sums are bounded independently of $N$.   This is obvious for the second sum, while the first is easily handled by splitting into the cases $0< x_1 \leq 1$ and $x_1=0$.   Since $f_2(1,1)=0$, the Poincar\'{e} inequality 
\begin{align*} \int_0^1 f_2^2(x_1,1) \dx_1 \leq \frac{4}{\pi^2} \int_0^1 f^2_{_{2,1}}(x_1,1) \dx_1
\end{align*}
applies, and hence, by \eqref{greatscott}, $\mathbb{D}(\tilde{f},(rQ)_1\setminus Q_1)$ is bounded independently of $N$.  \\

\vspace{1mm}
\noindent \textbf{Step 2.}  Construct $\Phi$ as follows.  Firstly, extend $g$ by zero into $((rQ)_1 \cup (rQ)_2) \setminus (Q_1 \cup Q_2)$ and call the resulting function $G(X)$, where $X=(x_1,x_2)$. Then set 
\begin{align*} \tilde{G}(X) & = \left\{ \begin{array}{l l } G(X) & \textrm{if } X \in \overline{(rQ)_1 \cup (rQ)_2} \\
-G(-X) & \textrm{if } X \in \overline{(rQ)_3 \cup (rQ)_4} \end{array}\right.
\end{align*}
Next, define $F: (rQ)_1 \cup (rQ)_2 \to \R^2$ by reflecting $\tilde{f}$ in the (upper) $x_2$-axis, i.e.
\begin{align*} F(X) & = \left\{ \begin{array}{l l } \tilde{f}(X) & \textrm{if } X \in \overline{(rQ)_1} \\
\tilde{f}(-x_1,x_2) & \textrm{if } X \in \overline{(rQ)_2}, \end{array}\right.
\end{align*}
and then extend $F$ to a function $\tilde{F}: rQ \to \R^2$ by 
\begin{align*} \tilde{F}(X) & = \left\{ \begin{array}{l l } F(X) & \textrm{if } X \in \overline{(rQ)_1 \cup (rQ)_2} \\
F(-X) & \textrm{if } X \in \overline{(rQ)_3 \cup (rQ)_4}. \end{array}\right.
\end{align*}
Note that by symmetry,
\begin{align*} \int_{(rQ)_3 \cup (rQ)_4} |\nabla \tilde{F}|^2 & = \int_{(rQ)_2 \cup (rQ)_1} |\nabla F|^2 
\end{align*}
and
\begin{align*}
\int_{(rQ)_2}|\nabla F|^2 & = \int_{(rQ)_1} |\nabla F|^2 \\
& = \int_{Q_1} |\nabla f|^2 + \mathbb{D}(\tilde{f},(rQ)_1\setminus Q_1) \\
& = \rho^2 \int_{Q_1} |\nabla g|^2 + \underbrace{\mathbb{D}(\tilde{f},(rQ)_1\setminus Q_1)}_{=:D_0}.\end{align*}
(This is where $\nabla f = \rho \, \cof \nabla g$ is used.)
Also, by \eqref{logN} and \eqref{notlogN},
\begin{align*} \int_{Q_1 \cup Q_2} |\nabla G|^2 & = \int_{Q_1} |\nabla g|^2 + \int_{Q_2} |\nabla g|^2 \\
& = 2\pi \ln N + O(1)
\end{align*}
as $N \to \infty$.   Finally, set
\begin{align*} \Phi(X)=\tilde{F}(X)+\tilde{G}(X) \quad \quad \quad X \in rQ.\end{align*}
Then $F$ and $G$ form the even and odd parts respectively of $\Phi \in H_0^1(rQ,\R^2)$. 

\vspace{1mm}
\noindent \textbf{Step 3.} We know that for odd/even decompositions, 
\begin{align*} \small{I\left(\phi,  
\scalebox{0.7}{{\begin{tabular}{|c|c|}
 \hline
$0$ &$-c$ 
\\ \hline
$c$ & $0$ \\
\hline
\end{tabular}}}\, \right)} & = 2\int_{Q_1 \cup Q_2} |\nabla G|^2 + 2 \int_{(rQ)_2 \cup (rQ)_1} |\nabla F|^2 -2c\int_{Q_1}  \cof \nabla F \cdot \nabla G \\
& = 2 \int_{Q_1}|\nabla g|^2 +2 \int_{Q_2} |\nabla g|^2 + 4  \int_{(rQ)_1} |\nabla F|^2 -2c\int_{Q_1}  \cof \nabla f \cdot \nabla g \\
& = 2 \int_{Q_1}|\nabla g|^2 +  4 \rho^2 \int_{Q_1} |\nabla g|^2 + 4D_0 -2c\rho \int_{Q_1}|\nabla g|^2 + O(1) \\
& = 4\pi \ln N + 8 \pi \rho^2 \ln N - 4c \rho \pi \ln N + O(1) \\
& = 4\pi (1+2\rho^2 - c\rho)\ln N + O(1)
\end{align*}
as $N \to \infty$.  When $c >\sqrt{8}$, the prefactor of $\ln N$ in the line above can be made negative by choosing $\rho$ suitably, so (HIM) fails when the domain of integration is $rQ$.  But since the prefactor of $\det \nabla \Phi$ is a $0-$homogeneous function, (HIM) is independent of dilations of the domain.  In particular, by exhibiting $\Phi$ that breaks (HIM) on some $rQ$ we may conclude that (HIM) fails on all $r'Q$ if $c > \sqrt{8}$.  A similar argument works in the case $c < -\sqrt{8}$.
\end{proof}

\section{Numerical experiments in the planar case}\label{numericalresults}
A minimizer $\varphi = (\varphi_1, \varphi_2) $ of \eqref{eye} for $n=2$ can be obtained numerically by the finite element method. We modified the MATLAB tool \cite{mova} exploiting the lowest-order (known as P1) elements  defined on a regular triangulation of $\Omega$. A complementary code is available at
\vspace{-3mm}
\begin{center}
\url{https://www.mathworks.com/matlabcentral/fileexchange/130564} 
\end{center}
\vspace{0.5mm}
for download and testing. The function $f$ is assumed to be a piecewise constant in smaller subdomains. If the triangulation is aligned with subdomain shapes, then the numerical quadrature of both terms in \eqref{eye} is exact. Based on the initial guess provided, the trust-region method strives to find the minimizer. If an argument is found for which the energy value drops below a negative prescribed value (set  in all experiments as $-100$), the computation ends with the output that the problem is unbounded. Otherwise, the problem is bounded and the minimum energy equals zero. Due to a termination criterion of the trust-region method, the zero value is only indicated by a very small positive 
number (eg. $10^{-6}$).

\begin{figure}[H]
\includegraphics[width=\textwidth]{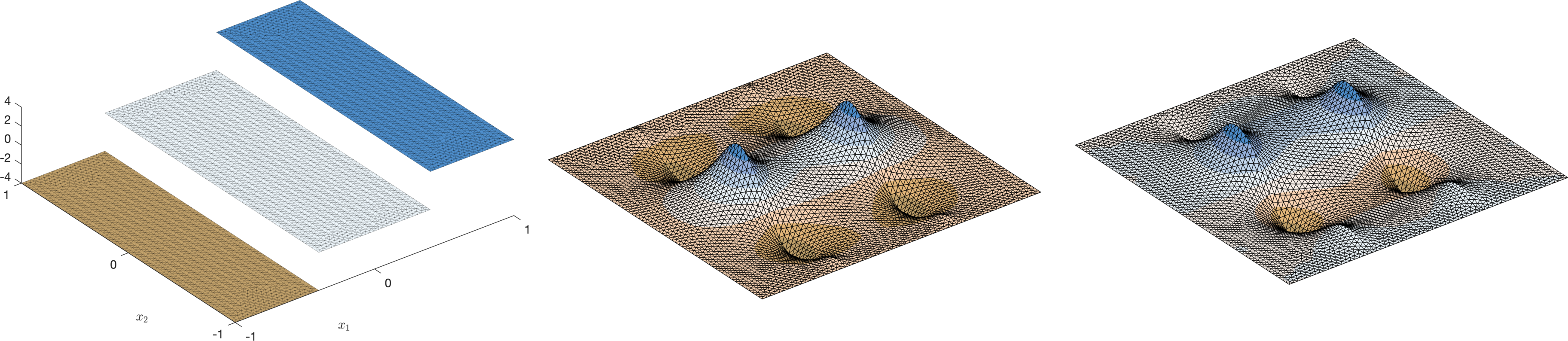}
\caption{Distribution of $f$ (left), components $\varphi_1$ (middle), and $\varphi_2$ (right) of the corresponding minimizer $\varphi=(\varphi_1, \varphi_2)$ providing $I(\varphi)<0$.
}
\label{fig:insultation_intro_minimizers}
\end{figure}

An example of an unbounded insulation type problem is shown in Figure \ref{fig:insultation_intro_minimizers}. It is defined on $\Omega = (-1,1)^2$
with one inner subdomain of vertical boundaries located at $x_1 = \pm 0.4$. Discrete values of $f$ are -4, 0, -4.

A different graphical output is provided in Figures \ref{fig:insulation_intro1}, \ref{fig:insulation_intro2} and \ref{gridmins}, consisting of a 2D view of $f$, low-energy densities 
$$|\nabla \varphi|^2\pm 2 \det \nabla \varphi, \quad |\nabla \varphi|^2 + f \det \nabla \varphi $$ and the deformed triangulated domain $\varphi(\Omega)+\Omega$.  Note that the low-energy $\varphi$
have been rescaled by a constant (positive) factor so that the triangles in the deformed domain provide a smooth visual output.  In view of Proposition \ref{portmanteau}(i), the rescaling preserves the sign of the corresponding energy. 

\subsection{Variable width}
In Figure \ref{fig:insulation_intro1} the deflection angle $\alpha$, 
which is described by means of Figure \ref{pic:alphadescription}, is set to zero and the quantities $\rho:=w_0/w_4$ are varied with $w_{-4}=w_4$. 

\begin{figure}[H]
\centering
\begin{minipage}{1\textwidth}
\centering
\begin{tikzpicture}[scale=0.8]
\node (A) at (-3,-3) {}; 
\node[right=1 of A.center] (B) {};
\node[right=3 of B.center] (C) {};
\node[right=1 of C.center] (D) {};
\node[above=4 of A.center] (A') {};
\node[right=1 of A'.center] (B') {};
\node[right=3 of B'.center] (C') {};
\node[right=1 of C'.center] (D') {};

\node   (B'C')  [above=0.8 of $(B')!0.5!(C')$] {};
\node   (BC)  [below=0.8 of $(B)!0.5!(C)$] {};

\draw [thick] (B.center) -- (BC.center) -- (C.center) -- (C'.center) --   (B'C'.center) -- (B'.center) -- cycle;

\node (M) at ($(BC)!0.5!(B'C')$) {0};    

\filldraw[thick, top color=gray!30!,bottom color=gray!30!] (A.center) rectangle node{$-4$} (B'.center);
\filldraw[thick, top color=gray!30!,bottom color=gray!30!] (C.center) rectangle node{$4$} (D'.center);
\draw [decorate,decoration={brace,amplitude=10pt,mirror,raise=4pt},yshift=-1pt]
(A.center) -- (B.center) node [black,midway,xshift=0cm,yshift=-0.8cm] {$w_{-4}$};
\draw [decorate,decoration={brace,amplitude=10pt,raise=4pt},yshift=-1pt]
(B) -- (C) node [black,midway,xshift=0cm,yshift=0.8cm] { $w_{0}$};
\draw [decorate,decoration={brace,amplitude=10pt,mirror,raise=4pt},yshift=-1pt]
(C.center) -- (D.center) node [black,midway,xshift=0cm,yshift=-0.8cm] {$w_{4}$};

\draw[thick,<->] ($(B'.center)!0.5!(C'.center)$) arc[start angle=0,end angle=30,radius=2]
 node[midway,fill=white] {$\alpha$};

\draw[dotted] (B') -- (C');

\end{tikzpicture}
\end{minipage}
\caption{Insulation problem with deflection.} \label{pic:alphadescription}
\end{figure}
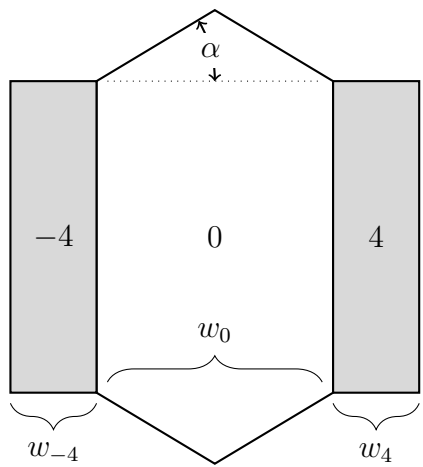

As $\rho$ increases, so the (light blue) insulation region $\{f=0\}$ grows relative to the regions $\{f=\pm 4\}$ and appears to carry with it an increased Dirichlet energy cost (since the integrand becomes $|\nabla \varphi|^2$ when $f=0$) until, in Figure \ref{fig:insulation_intro1} (D), the functional attains a minimum value of zero.\footnote{We conjecture that, in this geometry, (HIM) holds for all $\rho \geq 2$.}  We ask why this should  be so.  

\begin{figure}
\begin{subfigure}[b]{\textwidth}
\centering
\includegraphics[width=\textwidth]{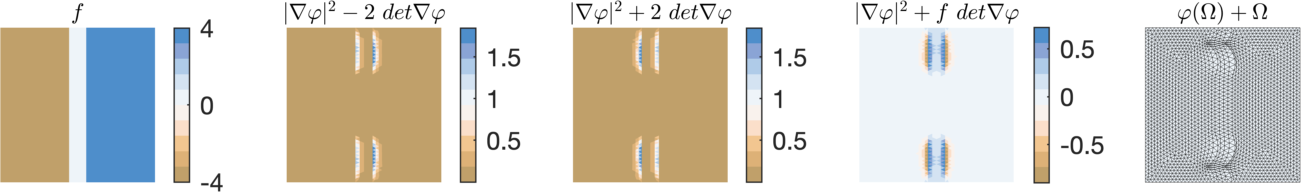}
\caption{$\rho=1/4$:  $\bf{I(\varphi)<0}$}
\vspace{1em}
\end{subfigure}
\begin{subfigure}[b]{\textwidth}
\centering
\includegraphics[width=\textwidth]{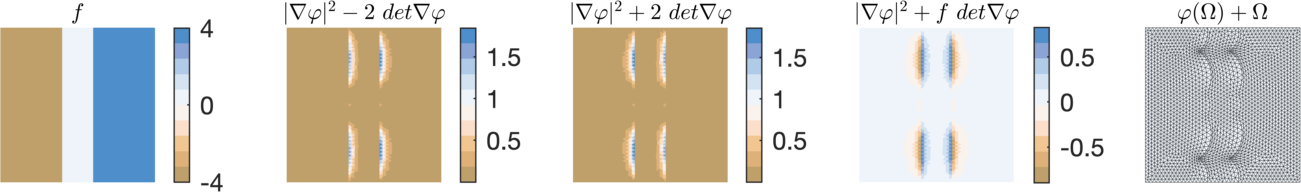}
\caption{$\rho=1/2$:  $\bf{I(\varphi)<0}$
}
\vspace{1em}
\end{subfigure}
\begin{subfigure}[b]{\textwidth}
\centering
\includegraphics[width=\textwidth]{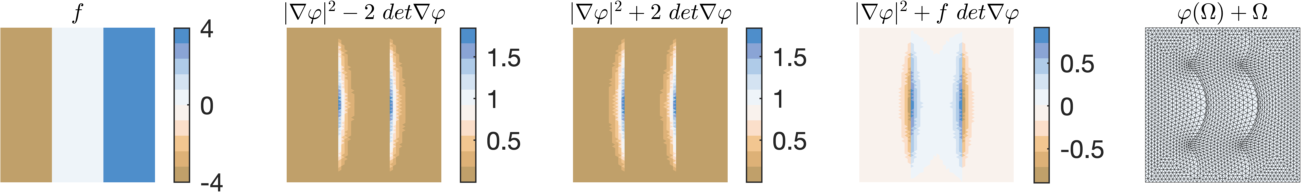}
\caption{$\rho=1$:  $\bf{I(\varphi)<0}$
}
\vspace{1em}
\end{subfigure}
\begin{subfigure}[b]{\textwidth}
\centering
\includegraphics[width=\textwidth]{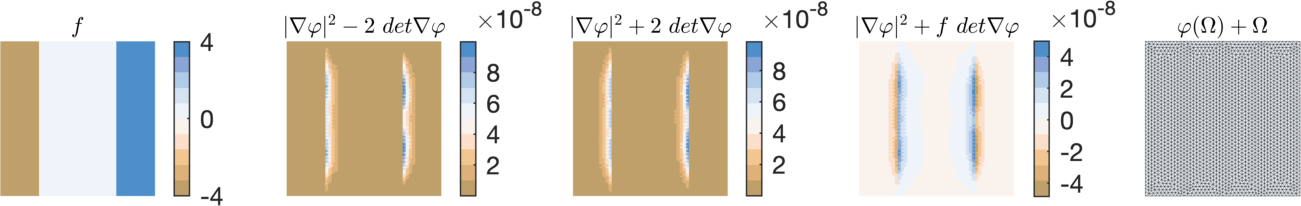}
\caption{$\rho=2$:  $\bf{I(\varphi)>0}$
}
\end{subfigure}
\caption{A sequence of minimizers: an insulation problem with variable width parameter $\rho$ and constant deflection angle $\alpha=0$.  The boundary $\Gamma$, which is defined in \eqref{gammabdry}, lies along the intersection of the gold and light blue regions in each figure in the leftmost column.}
\label{fig:insulation_intro1}
\end{figure}

\begin{figure}
\begin{subfigure}[b]{\textwidth}
\centering
\includegraphics[width=\textwidth]{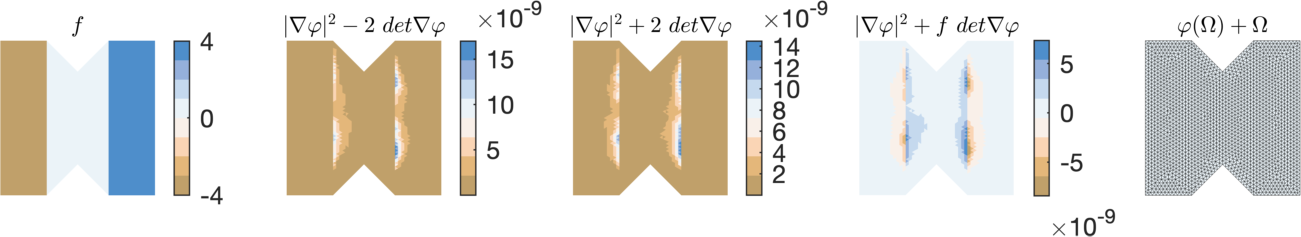}
\vspace{-1cm}
\caption{$\alpha = -\pi/4 : \bf{I(\varphi)>0}$}
\vspace{1em}
\end{subfigure}
\begin{subfigure}[b]{\textwidth}
\centering
\includegraphics[width=\textwidth]{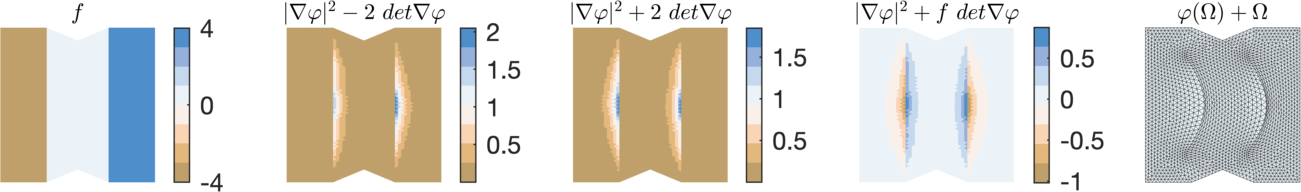}
\caption{$\alpha = -\pi/8 : \bf{I(\varphi)<0}$}
\vspace{1em}
\end{subfigure}
\begin{subfigure}[b]{\textwidth}
\centering
\includegraphics[width=\textwidth]{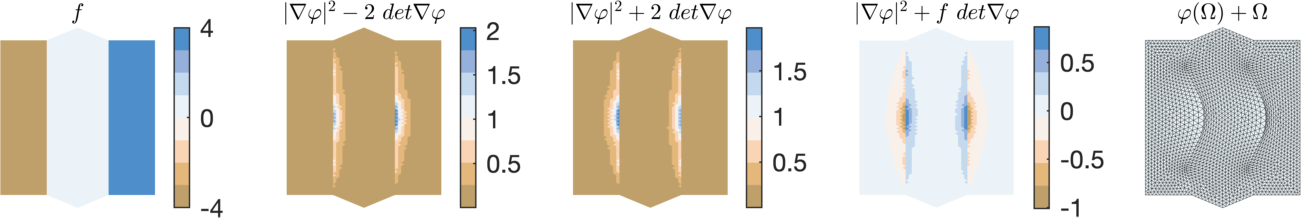}
\caption{$\alpha = \pi/8: \bf{I(\varphi)<0}$}
\vspace{1em}
\end{subfigure}
\begin{subfigure}[b]{\textwidth}
\centering
\includegraphics[width=\textwidth]{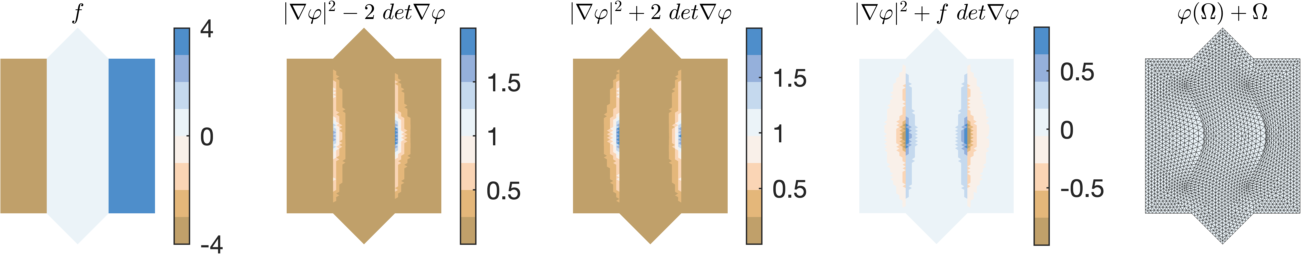}
\caption{$\alpha = \pi/4: \bf{I(\varphi)<0}$}
\end{subfigure}
\caption{A sequence of minimizers: an insulation problem with a fixed width $w_0=0.8, w_4=w_{-4}=0.6$ and a variable deflection angle $\alpha$.}
\label{fig:insulation_intro2}
\end{figure}

One possible heuristic explanation is as follows.  First note that we may assume without loss of generality that $\varphi$ is symmetric about the $x_2-$axis, which bisects the region $\{f=0\}$, and so confine attention to the behavior of $\varphi$ in just the left-hand plane, $L$, say.  Further, since $\varphi$ can be assumed to be harmonic in the regions $\{f=-4\}$ and $\{f=0\}\cap L$, and since $\det \nabla \varphi$ is a null Lagrangian, the value of 
\begin{align}\label{halfaneye}
    I_{L}(\varphi):=\int_{\{f=-4\}}|\nabla \varphi|^2-4 \det \nabla \varphi \dx + \int_{\{f=0\}\cap L}|\nabla \varphi|^2\dx
\end{align}
is completely determined by the behavior of $\varphi$ along the boundary sets 
\begin{align}\label{gammabdry}\Gamma:=\partial (\{f=-4\}) \cap \partial (\{f=0\})
\end{align}and the $x_2$-axis $\partial(L \cap \{f=0\})$.  Recall that $\varphi$ vanishes on $\partial Q$.  The second column of Figures \ref{fig:insulation_intro1} (A), (B), (C) and (D) shows that in the region $\{f=-4\}$ low-energy $\varphi$ tend to behave more conformally than not, which makes sense in view of the integrand of \eqref{halfaneye}.  This, in turn, produces a trace $\varphi\arrowvert_{_{\Gamma}}$ that helps determine\footnote{Leaving aside the observation that, by symmetry and harmonicity, $\partial_1\varphi$ should vanish on $\partial(L \cap \{f=0\})$. } the Dirichlet energy $$D_0(\varphi):=\int_{L\cap\{f=0\}}|\nabla \varphi|^2 \dx.$$  The two quantities $D_0(\varphi)$ and
\begin{align}\label{confe} \int_{\{f=-4\}}|\nabla \varphi|^2-4 \det \nabla \varphi \dx\end{align}
compete.  In Figure \ref{fig:insulation_intro1} (A), the higher frequency trace $\varphi\arrowvert_{\Gamma}$ presumably lowers the energy \eqref{confe} without incurring a large Dirichlet cost $D_0(\varphi)$ in the neighboring region $L\cap \{f=0\}$, and in such a way that the negative term \eqref{confe} dominates.  In Figure \ref{fig:insulation_intro1} (C), the same mechanism leads not only to negative energy but also to a lower frequency trace: for the domain in (C), the Dirichlet energy of the traces seen in (A) and (B), for example, is too large.  Finally, in Figure \ref{fig:insulation_intro1} (D), the width of the $\{f=0\}$ region is such that even the `low frequency' trace $\varphi_{_\Gamma}$ has a larger $D_0(\varphi)$ than the negative contribution of \eqref{confe}, and the functional cannot become negative.

\subsubsection{Variable angle}
In Figure \ref{fig:insulation_intro2}, a similar pattern is observed, but in this case, the angle of deflection $\alpha$, which is defined in Figure \ref{pic:alphadescription}, now plays the role of the changing $\rho$ featured in Figure \ref{fig:insulation_intro1}.  Since the quantities $w_0$ and $w_4$ are fixed, the domain featured in Figure \ref{fig:insulation_intro2} (B) is included in that featured in (C) and (D), so any $\varphi$ causing (HIM) to fail in case (B) will also cause it to fail in cases (C) and (D).

\subsection{Grid of $m \times m$ points}
To conclude our brief numerical investigation, we now consider a sequence of functionals inspired by the point-contact insulation problem discussed in Section \ref{-4040}.  
In the notation introduced earlier, let $$f_2 = \sqrt{8} \chi_{Q_1} - \sqrt{8} \chi_{Q_3}$$
and recall that, by Proposition \ref{foursquares1}, the functional
$$ I_2(\varphi,f_2):=\int_{Q} |\nabla \varphi|^2 +f_2(x)\det \nabla \varphi \dx $$
obeys $I_2(\varphi,f_2)\geq 0$
for all candidate $\varphi$, and, moreover, that $\sqrt{8} \approx 2.8284$ is the largest value for which this inequality holds.

To check how sharply this bound can be approximated numerically, we consider a sequence of minimization problems with a modified functional 
$$ I_2(\varphi, \lambda f_2), $$
where $\lambda$ is a real parameter.  According to the analysis above, we expect that $\lambda=1$ will be a critical value in the sense that $I_2(\varphi, \lambda f_2) \geq 0$ should hold for all test functions $\varphi$ when $|\lambda|\leq 1$, whereas this should fail to be the case when $|\lambda|>1$.  Using a bisection method for $\lambda \in [1,2]$, we searched numerically for $\lambda$ such that $ I_2(\varphi, \lambda f_2) <0 $ occurs for some $\varphi$. Each such $\lambda$ is an upper bound on the `true' critical value of 1.  The results are shown in Figure \ref{lamgrid}.  We see that the computed values of $\lambda$ depend on the choice of the computational mesh and that the finer the mesh, the closer to $1$ were the approximations.  The smallest value $\lambda=1.0859$ was obtained on a regular mesh with 65536 triangular elements. 
\begin{figure}
\includegraphics[width=0.5\textwidth]{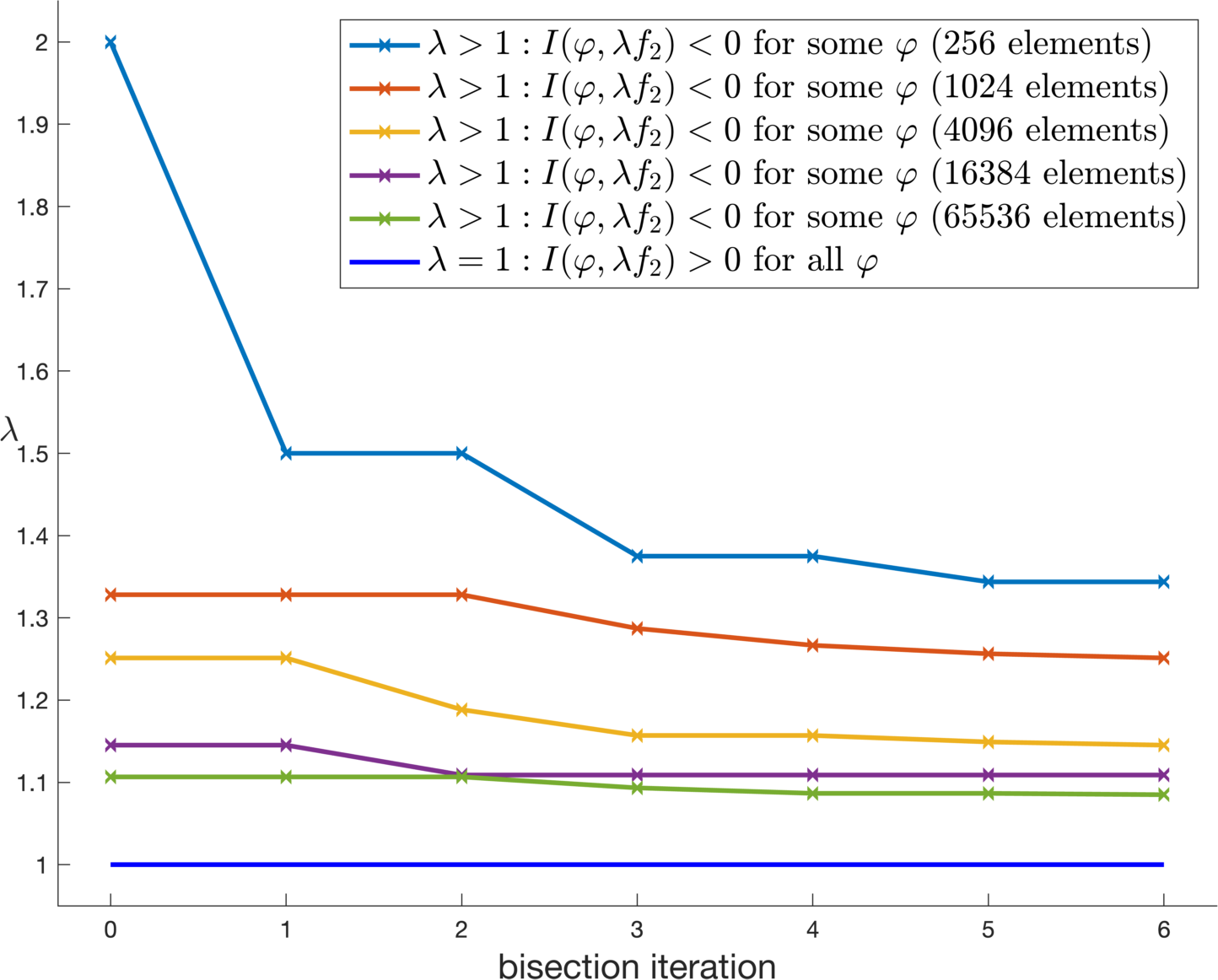}
\caption{Values of $\lambda$ ensuring $I_2(\varphi, \lambda f_2)<0$ for some $\varphi$, which are produced by the bisection method applied to 5 nested triangular meshes.
}\label{lamgrid}
\end{figure}

It is natural to ask whether the (HIM) inequality
$$I_2(\varphi,f_2)\geq 0 \quad \varphi \in W_0^{1,2}(Q;\R^2),$$ which has $\delta f=2 \sqrt{8}$, can be improved by subdividing the domain $Q$ more finely, redefining $f$ suitably, and thereby producing larger values of $\delta f$.   


Accordingly, consider a sequence of piecewise constant functions $f_2, f_3, \ldots$ that are chosen to replicate along the main diagonal of $Q$ the maximum jump of $2 \sqrt{8}$ that features in $f_2$.  

For illustration, the leftmost column of Figure \ref{gridmins} shows, in graphical form, the values taken by $f_5$ and the subsquares of $Q$ on which the various values are taken.  The other $f_m$ are constructed similarly, and they have the properties that
\begin{enumerate}
    \item[(i)] the largest value taken by each $f_m$ is $\sqrt{8}(m-1)$,
   \item[(ii)] the largest jump in $f_m$ is $\delta{f_m}=2 \sqrt{8}(m-1)$.  
\end{enumerate}
The figures in the four other columns show observed features of $\varphi$ for which $I_2(\varphi,f_5)<0$, where, more generally, 
$$I_2(\varphi,f_m):=\int_{Q} |\nabla \varphi|^2 + f_m(x)\det \nabla \varphi \dx, \quad m=2,3,\ldots.$$

\begin{figure}[H]
\includegraphics[width=\textwidth]{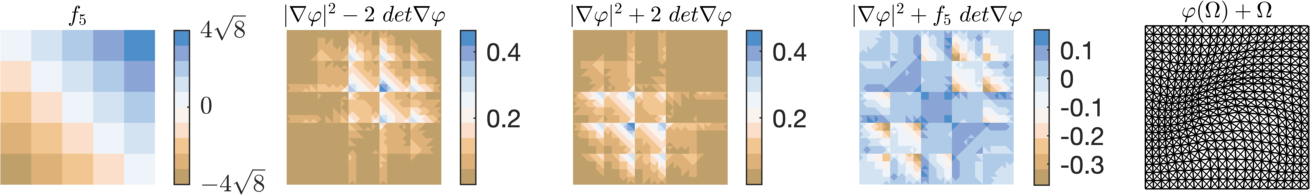}
\caption{Leftmost column: the distribution of values taken by the piecewise constant function $f_5$. Other columns: features of $\varphi$ such that $I_2(\varphi,f_5)<0$.  }
\label{gridmins}
\end{figure}


Numerical experiments with outputs similar to those of Figure \ref{gridmins} show that (HIM) appears to fail in the cases $m=3,4,5$.  We infer that other than for the $2 \times 2$ grid, the maximum `diagonal' jump of $2\sqrt{8}$ along the leading diagonal of $Q$ is too large for (HIM) to hold.  Before modifying the approach slightly, let us note in relation to Figure \ref{gridmins} that when $m=5$,
 \begin{itemize}
\item[(i)] low-energy $\varphi$ appear to concentrate their most significant `non-zero behaviour' near intersection points of sets of four subsquares; 
\item[(ii)] by superimposing neighbouring figures in the second and third columns, we see that the 
regions described in (i) tend to be composed of mutually exclusive `patches' of conformal/anticonformal gradients. 
\end{itemize}
Similar patterns were observed for other choices of $m$.

Let us now replace each $f_m$ by $\lambda f_m$, where $\lambda > 0$, and seek the largest $\lambda$ for which 
\begin{align}\label{iphilamn}I(\varphi,\lambda f_m)=\int_{Q} |\nabla \varphi|^2 + \lambda f_m(x)\det \nabla \varphi \dx\end{align}
obeys $I(\varphi,\lambda f_m) \geq 0$ for all $\varphi$ in $W^{1,2}_0(Q;\R^2)$.  The pointwise Hadamard inequality implies that $\lambda$ such that that $\sqrt{8}\lambda(m-1) = 2$ will be such that (HIM) holds, so the set $$S(m):=\{\lambda>0: \ I(\varphi,\lambda f_m) \geq 0 \ \textrm{for all} \  \varphi \in W^{1,2}_0(Q;\R^2)\}$$
contains $(\sqrt{2}(m-1))^{-1}$. Let $\lambda_{\textrm{crit}}(m)=\sup S(m)$ and note that if $\lambda_{\textrm{crit}}(m)=\infty$ then we may take $\delta (\lambda f_m)$ arbitrarily large such that (HIM) holds.  Otherwise, $\lambda_{\textrm{crit}}(m)$ is finite, and in fact bounded above by $1$ in all cases $m \geq 3$ that we have tested. See Figure \ref{lamcrit} for the cases $m=3,\ldots,25$, for example.  

\begin{lemma}\label{Iorder}
Let $I(\varphi;\lambda f_m)$ be given by \eqref{iphilamn}, let $ \mu > \lambda$ and suppose that $I(\varphi;\lambda f_m)<0$ for some $\varphi \in W^{1,2}_0(Q,\R^2)$.  Then
\begin{align}\label{order}I(\varphi;\mu f_m)<I(\varphi;\lambda f_m).\end{align}
\end{lemma}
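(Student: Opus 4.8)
The plan is to exploit the fact that the parameter $\lambda$ enters $I$ affinely. First I would record that for every real $t$,
\[
I(\varphi;t f_m) = \int_Q |\nabla\varphi|^2\dx + t\int_Q f_m(x)\det\nabla\varphi\dx ,
\]
so that, writing $\ell(\varphi):=\int_Q f_m(x)\det\nabla\varphi\dx$, the map $t\mapsto I(\varphi;t f_m)$ is affine with slope $\ell(\varphi)$. Consequently $I(\varphi;\mu f_m)-I(\varphi;\lambda f_m)=(\mu-\lambda)\,\ell(\varphi)$, and since $\mu>\lambda$ the desired inequality \eqref{order} is equivalent to the single sign condition $\ell(\varphi)<0$.

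Next I would extract this sign from the hypothesis. Putting $t=\lambda$ in the identity above and rearranging gives $\lambda\,\ell(\varphi)=I(\varphi;\lambda f_m)-\int_Q|\nabla\varphi|^2\dx$; the first term on the right is strictly negative by assumption and the second is nonnegative, so $\lambda\,\ell(\varphi)<0$. Because the lemma is applied in the regime $\lambda>0$ — the only one relevant to the definition of $S(m)$ and $\lambda_{\mathrm{crit}}(m)$, and in any case $\lambda=0$ is impossible since $I(\varphi;0)=\int_Q|\nabla\varphi|^2\dx\ge 0$ would contradict $I(\varphi;\lambda f_m)<0$ — we may divide by $\lambda$ to conclude $\ell(\varphi)<0$.

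Finally, substituting back, $I(\varphi;\mu f_m)=I(\varphi;\lambda f_m)+(\mu-\lambda)\,\ell(\varphi)<I(\varphi;\lambda f_m)$, since $\mu-\lambda>0$ and $\ell(\varphi)<0$; this is \eqref{order}. There is no genuine obstacle in this argument; the only point worth flagging is that the passage from $\lambda\,\ell(\varphi)<0$ to $\ell(\varphi)<0$ uses $\lambda>0$, so I would invoke the lemma with $0<\lambda<\mu$, consistent with the preceding discussion of $\lambda_{\mathrm{crit}}(m)$. The monotonicity just proved immediately shows that $S(m)$ is an interval of the form $(0,\lambda_{\mathrm{crit}}(m)]$ or $(0,\lambda_{\mathrm{crit}}(m))$, which is presumably why the lemma is recorded.
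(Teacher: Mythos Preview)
Your argument is correct and is essentially the same as the paper's: both reduce to showing $\int_Q f_m\det\nabla\varphi<0$ from the hypothesis $I(\varphi;\lambda f_m)<0$ (using $\lambda>0$), and then use affinity in the parameter together with $\mu>\lambda$ to conclude. Your version is just slightly more explicit about the role of the sign of $\lambda$, which the paper leaves implicit since the surrounding discussion already restricts to $\lambda>0$.
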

\begin{proof}
The assumptions imply that 
$$\int_{Q} \lambda f_m \det \nabla \varphi \dx < - \int_Q |\nabla \varphi|^2\dx ,$$
the right-hand side of which must be negative.  Hence
\begin{align*}
\int_{Q} \mu f_m \det \nabla \varphi \dx < \int_{Q} \lambda f_m \det \nabla \varphi \dx,
\end{align*}
from which \eqref{order} follows easily.
 \end{proof}

We infer that $S(m)$ is the interval $(0,\lambda_{\textrm{crit}}(m)).$  Indeed, if this were not the case then there are points $a<b<c$ such that $a,c\in S(m)$ and $b\notin S(m)$. If $b \notin S(m)$ then there is $\varphi$ such that $I(\varphi,b f_m)<0$, and hence, by Lemma \ref{Iorder}, $I(\varphi; c f_m)<0$, contradicting the assumption that $c \in S(m)$. 
By referring to Figure \ref{lamcrit}, it seems to hold that $S(m)\subset [0,1)$.   

Using the bisection method described earlier, we calculate approximations, labelled $\lambda_{\textrm{approx}}(m)$, to $\lambda_{\textrm{crit}}(m)$ for some of $m=1, \ldots, 25$, the results of which are shown in Figure \ref{lamcrit}. 

\begin{figure}[H]
\includegraphics[width=0.8\textwidth]{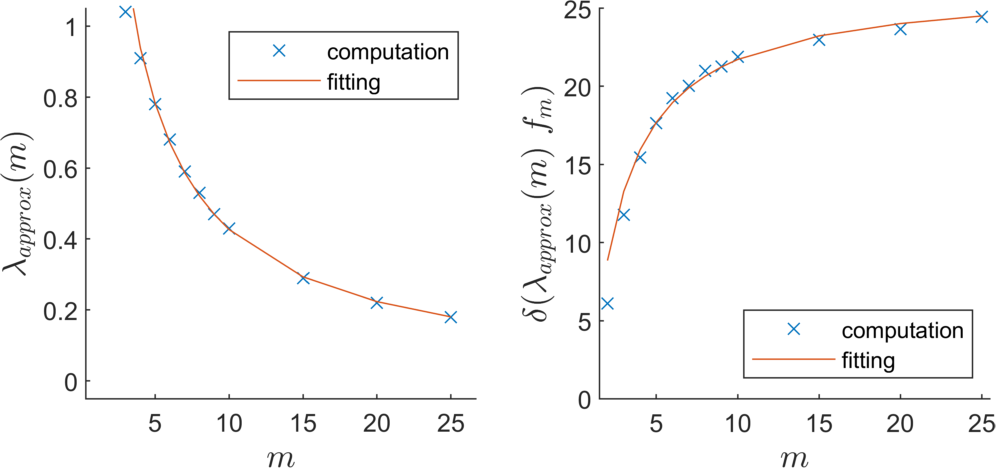}
\caption{Left plot: $\lambda_{\textrm{crit}}(m)$ against $m$. Right plot: $\delta(\lambda_{\textrm{crit}}(m)f_m)$ against $m$. Here, $m^2$ is the number of equally-sized squares into which $Q$ is divided when constructing $f_m$.}\label{lamcrit}
\end{figure}

The curve fitting in the left plot suggests that $\lambda_{\textrm{approx}}(m) \sim \frac{c_0}{m+1}$ with $c_0 \approx 4.6924$.  This further implies, as one can see from the right plot, that $\delta( \lambda_{\textrm{approx}}(m) f_m) \sim 24.5023 - \mathrm{O}(\frac{1}{m+1})$.  Thus, supposing that $
\lambda_{\textrm{approx}}(m) \simeq \lambda_{\textrm{crit}}(m)$, (HIM) appears to be consistent with jumps in $f$ of order $\sim24.5023$.  

\section{Sequential weak lower semicontinuity and nonnegativity of $I_n$}

Here we will study the relationship between the nonnegativity of $I_n$ and its sequential weak lower semicontinuity on $W^{1,n}_0(Q;\R^n)$. Given $f\in L^\infty (Q)$ let us denote for any  $x\in Q$ and  any $F\in\R^{n\times n}$
\begin{align}
h(x,F)=|F|^n+ f(x)\det F\ .
    \end{align}
    It is easy to see that $h:Q\times\R^{n\times n}\to\R$ is a Carath\'{e}odory integrand and   $h(x,\cdot)$ is quasiconvex for almost every $x\in Q$ \cite{morrey}.  Moreover,  $h(x,\cdot)$ is $n$-Lipschitz, i.e.,  there is $\alpha>0$ such that for almost every $x\in Q$ and every $F_1,F_2\in\R^{n\times n}$
    
    \begin{align}\label{n-Lipschitz}|h(x,F_1)-h(x,F_2)|\le \alpha(1+|F_1|^{n-1}+ |F_2|^{n-1})|F_1-F_2|\ ,\end{align}

This follows from  \cite[Prop.~2.32]{dacorogna} and the fact that $f$ is uniformly  bounded.
The following lemma proved in \cite{fmp} is crucial for further results in this section. 

\begin{lemma}\label{fons}
Let   $Q\subset\R^n$ be as bounded domain  and let $( \varphi_k)_{k\in\N}\subset W^{1,n}(Q;\R^n)$ be bounded. Then there is a subsequence $(\varphi_j)_{j\in\N}$ and a sequence $(z_j)_{j\in\N}\subset W^{1,n}(Q;\R^n)$ such that
\begin{align}\label{rk}
\lim_{j\to\infty} \mathcal{L}^n(\{x\in Q:\ z_j(x)\ne \varphi_j(x)\mbox{ or }  \nabla z_j(x)\ne \nabla \varphi_j(x)\})=0
\end{align}
and $(|\nabla z_j|^n)_{j\in\N}$ is relatively weakly compact in $L^1(Q)$.
\end{lemma}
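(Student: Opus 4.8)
This is the decomposition lemma of Fonseca, M\"uller and Pedregal, and for the applications in this paper one may simply invoke \cite{fmp}; below is the route a self-contained proof would take. The plan is to obtain each $z_j$ as a Lipschitz-type truncation of $\varphi_j$ at a level $\lambda_j$ that is allowed to tend to infinity. First extend $\varphi_j$ to $\tilde\varphi_j\in W^{1,n}(\R^n;\R^n)$ with $\sup_j\|\tilde\varphi_j\|_{W^{1,n}(\R^n)}<\infty$ (legitimate for the Lipschitz domains that occur in our applications; for a general bounded open $Q$ one instead truncates relative to an exhaustion of $Q$ by cubes, as in \cite{fmp}). For $\lambda>0$ let $\tilde\varphi_j^{\lambda}\in W^{1,\infty}(\R^n;\R^n)$ be the maximal-function (Acerbi--Fusco) truncation: it satisfies $\tilde\varphi_j^{\lambda}=\tilde\varphi_j$ and $\nabla\tilde\varphi_j^{\lambda}=\nabla\tilde\varphi_j$ a.e.\ on $\{M(|\nabla\tilde\varphi_j|)\le\lambda\}$, $\|\nabla\tilde\varphi_j^{\lambda}\|_{L^\infty}\le c_0\lambda$, and, crucially, $|\nabla\tilde\varphi_j^{\lambda}|\le c_0\,M(|\nabla\tilde\varphi_j|)$ pointwise a.e. Put $z_j:=\tilde\varphi_j^{\lambda_j}|_Q$. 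Then $\{z_j\ne\varphi_j\}\cup\{\nabla z_j\ne\nabla\varphi_j\}\subseteq Q\cap\{M(|\nabla\tilde\varphi_j|)>\lambda_j\}$, and since the maximal function obeys $\mathcal{L}^n(\{M(|\nabla\tilde\varphi_j|)>\lambda\})\le C\lambda^{-n}\sup_j\|\nabla\varphi_j\|_{L^n}^n$, \eqref{rk} holds as soon as $\lambda_j\to\infty$.

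It remains to choose $\lambda_j\to\infty$ so that $(|\nabla z_j|^n)$ is in addition uniformly integrable, which by the Dunford--Pettis theorem is precisely relative weak compactness in $L^1(Q)$ (the sequence being $L^1$-bounded because $|\nabla z_j|\le c_0\,M(|\nabla\tilde\varphi_j|)$ and $n>1$). Since $n>1$, the strong $(n,n)$ maximal inequality gives $\sup_j\int_{\R^n}(M(|\nabla\tilde\varphi_j|))^n<\infty$; write $g_j:=(M(|\nabla\tilde\varphi_j|))^n$. By Chacon's biting lemma there are a subsequence (kept fixed from now on) and a decreasing sequence of sets $E_k$ with $\mathcal{L}^n(E_k)\to0$ such that $(g_j\mathbf{1}_{Q\setminus E_k})_j$ is uniformly integrable for every $k$; after passing to a subsequence in $k$ we may assume $\mathcal{L}^n(E_k)$ decays as fast as we like. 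On $Q\setminus E_k$ the bound $|\nabla\tilde\varphi_j^{\lambda}|^n\le c_0^n g_j$ (uniform in $\lambda$) makes the ``outer'' part $\int_{(Q\setminus E_k)\cap A}|\nabla\tilde\varphi_j^{\lambda}|^n$ equiintegrable for each fixed $k$, while on $E_k$ the Lipschitz bound gives $\int_{E_k}|\nabla\tilde\varphi_j^{\lambda}|^n\le (c_0\lambda)^n\mathcal{L}^n(E_k)$, which can be kept small by coupling $\lambda$ to $k$. The plan is then to choose $\lambda_k\uparrow\infty$ slowly enough that $(c_0\lambda_k)^n\mathcal{L}^n(E_k)\to0$, and finally to make a diagonal choice $k=k(j)\to\infty$ and set $\lambda_j:=\lambda_{k(j)}$, balancing the deterioration of the outer modulus against the decay of the inner error, so that $(|\nabla z_j|^n)$ becomes genuinely uniformly integrable while \eqref{rk} survives.

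The one genuinely delicate point is this last diagonalization: the uniform-integrability modulus of $(g_j\mathbf{1}_{Q\setminus E_k})_j$ generally worsens as $k\to\infty$ (a smaller $E_k$ leaves more of the potential concentration outside), whereas the inner error $(c_0\lambda_k)^n\mathcal{L}^n(E_k)$ improves only if $\lambda_k$ grows slowly, so the two requirements compete, and matching them while still forcing $\lambda_j\to\infty$ requires careful bookkeeping — this is exactly where the argument of \cite{fmp} does its real work (one convenient device is to pass to a weak-$*$ limit of the concentration measures $|\nabla\tilde\varphi_j^{\lambda}|^n\,\mathcal{L}^n$ and estimate its singular part by $(c_0\lambda)^n\mathcal{L}^n(E_k)$). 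Everything else — the extension, the standard properties of the maximal truncation, the measure bound underlying \eqref{rk}, and the equivalence of $L^1$-weak compactness with uniform integrability — is routine.
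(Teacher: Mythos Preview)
Your proposal is correct and, in fact, goes further than the paper does: the paper offers no proof of this lemma at all and simply attributes it to \cite{fmp}. Your sketch is the standard Acerbi--Fusco maximal-function truncation combined with Chacon's biting lemma and a diagonal argument, which is precisely the strategy of \cite{fmp}; your honest flagging of the diagonalization as the one delicate step is accurate, and deferring to \cite{fmp} there is entirely appropriate for the purposes of this paper.
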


Assume that $\varphi_k\wto\varphi$ in $W_0^{1,n}(Q;\R^n)$. Extracting a non-relabeled subsequence we can write $\varphi_k=z_k+w_k$ where $(z_k)_k$ is defined by Lemma~\ref{fons}, Hence, $z_k\wto\varphi$ in $W^{1,n}(Q;\R^n)$ and $\nabla w_k\to 0$ in measure as $k\to\infty$. 
Assume that for $\ell\in\N$ large enough  $\eta_\ell:Q\to[0,1]$ is a cut-off function such that $\eta_\ell(x)=1$ if dist$(x,\partial Q)\ge \ell^{-1}$, $\eta_\ell(x)=0$ on $\partial Q$ and $|\nabla\eta_l|\le C\ell$ for some $C>0$. We define $u_{k\ell}=\eta_\ell z_k+(1-\eta_\ell)\varphi$. Following \cite[Lemma~8.3]{Pedregal}
we get a sequence $(\tilde z_\ell)_\ell=(u_{k(\ell)\ell})_\ell$ such that 
\begin{align}\label{rk1}
\lim_{j\to\infty} \mathcal{L}^n(\{x\in Q:\ \tilde z_j(x)\ne z_j(x)\mbox{ or }  \nabla \tilde z_j(x)\ne \nabla z_j(x)\})=0
\end{align}
and $(|\nabla \tilde z_j|^n)_{j\in\N}$ is relatively weakly compact in $L^1(Q)$. 
Altogether, it follows that $\lim_{j\to\infty}\|\nabla z_j-\nabla \tilde z_j\|_{L^n(Q;\R^n)}=0$ by the Vitali convergence theorem. 
It is easy to see using \eqref{n-Lipschitz} together with Lemma~\ref{fons} and the previous calculations that 
\begin{align}
    \liminf_{k\to\infty}I_n(\varphi_k)& = \liminf_{k\to\infty}(I_n(z_k)+I_n(w_k))=\liminf_{k\to\infty}(I_n(\tilde z_k)+I_n(w_k))\nonumber\\ &\ge\liminf_{k\to\infty}I_n(\tilde z_k) +\liminf_{k\to\infty}I_n(w_k)\ .
\end{align}
It follows from \cite[Thm~3.1 (i)]{kroemer} that $\liminf_{k\to\infty}I_n(\tilde z_k)\ge I_n(\varphi)$.  If, additionally, we have \begin{align}\label{swlsc-1}\liminf_{k\to\infty}I_n(w_k)\ge I_n(0)=0\end{align} then 
\begin{align}
    \liminf_{k\to\infty}I_n(\varphi_k)\ge I_n(\varphi),
    \end{align}
    i.e., $I_n$ is sequentially weakly lower semicontinuous on $W^{1,n}(Q;\R^n)$.
    Obviously, \eqref{swlsc-1} holds if $I_n\ge 0$. On the other hand, 
    If $I_n$ is sequentially weakly lower semicontinuous on $W^{1,n}_0(Q;\R^n)$ then 
    \eqref{swlsc-1} must hold for every $(w_k)\subset W^{1,n}_0(Q;\R^n)$ converging to zero in measure.
    Altogether, it yields the  following proposition.
    \begin{proposition}\label{Prop:swlsc}
    Let $f\in L^\infty(Q)$ and let $I_n\ge 0$ on $W^{1,n}_0(Q;\R^n)$.  Then $I_n$ is sequentially weakly lower semicontinuous on $W^{1,n}_0(Q;\R^n)$ if and only if 
    \eqref{swlsc-1}  holds for every $(w_k)\subset W^{1,n}_0(Q;\R^n)$ converging to zero in measure. In particular, $I_n$ is sequentially weakly lower semicontinuous on $W^{1,n}_0(Q;\R^n)$ if $I_n\ge 0$ on $W^{1,n}_0(Q;\R^n)$.
    \end{proposition}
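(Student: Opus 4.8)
The plan is to combine the Fonseca--M\"uller--Pedregal-type decomposition recorded in Lemma~\ref{fons} with a splitting of oscillation and concentration effects made possible by the $n$-Lipschitz bound \eqref{n-Lipschitz}, following the scheme already outlined in the paragraphs preceding the statement. First I would fix an arbitrary weakly convergent sequence $\varphi_k\wto\varphi$ in $W^{1,n}_0(Q;\R^n)$ and apply Lemma~\ref{fons} to pass to a (non-relabelled) subsequence and obtain $(z_k)_k\subset W^{1,n}(Q;\R^n)$ with $(|\nabla z_k|^n)_k$ equi-integrable and with $\mathcal{L}^n(\{z_k\ne\varphi_k\}\cup\{\nabla z_k\ne\nabla\varphi_k\})\to0$; then $z_k\wto\varphi$, and the ``concentration part'' $w_k:=\varphi_k-z_k$ satisfies $\nabla w_k\to0$ in measure. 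A boundary correction via \cite[Lemma~8.3]{Pedregal} --- replacing $z_k$ by $\tilde z_k=\eta_\ell z_k+(1-\eta_\ell)\varphi$ for a cut-off $\eta_\ell$ supported near $\partial Q$ and a diagonal choice $\ell=\ell(k)$ --- then produces $\tilde z_k$ enjoying the same two properties, but additionally $\tilde z_k-\varphi\in W^{1,n}_0(Q;\R^n)$ and, by the Vitali convergence theorem, $\|\nabla z_k-\nabla\tilde z_k\|_{L^n(Q;\R^{n\times n})}\to0$. In particular one may assume from now on that $\tilde z_k\wto\varphi$ and that $w_k\in W^{1,n}_0(Q;\R^n)$.

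Next I would use the growth/Lipschitz estimate \eqref{n-Lipschitz}, together with H\"older's inequality applied on the set of vanishing measure where $\nabla\varphi_k$ fails to coincide with $\nabla\tilde z_k+\nabla w_k$ and the equi-integrability of $(|\nabla\tilde z_k|^n)_k$, to bound the resulting ``cross terms'' and thereby arrive at the decoupling
\begin{align*}
\liminf_{k\to\infty}I_n(\varphi_k)=\liminf_{k\to\infty}(I_n(\tilde z_k)+I_n(w_k))\ge\liminf_{k\to\infty}I_n(\tilde z_k)+\liminf_{k\to\infty}I_n(w_k).
\end{align*}
Since the integrand $h(x,F)=|F|^n+f(x)\det F$ is a Carath\'eodory integrand that is quasiconvex for almost every $x$ and obeys \eqref{n-Lipschitz}, and since $(|\nabla\tilde z_k|^n)_k$ is equi-integrable with $\tilde z_k\wto\varphi$, an application of \cite[Thm~3.1(i)]{kroemer} yields $\liminf_{k\to\infty}I_n(\tilde z_k)\ge I_n(\varphi)$.

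With these two ingredients the proposition follows by bookkeeping. If \eqref{swlsc-1} holds for every $(w_k)\subset W^{1,n}_0(Q;\R^n)$ converging to zero in measure, then in particular $\liminf_{k}I_n(w_k)\ge0$ for the $w_k$ produced above, so the displayed inequality and Kroemer's bound give $\liminf_{k}I_n(\varphi_k)\ge I_n(\varphi)$; since $(\varphi_k)$ was arbitrary, $I_n$ is sequentially weakly lower semicontinuous on $W^{1,n}_0(Q;\R^n)$. For the converse, if $I_n$ is sequentially weakly lower semicontinuous, then for any bounded $(w_k)\subset W^{1,n}_0(Q;\R^n)$ with $\nabla w_k\to0$ in measure every subsequence has a further subsequence converging weakly to $0$ in $W^{1,n}_0(Q;\R^n)$, whence $\liminf_{k}I_n(w_k)\ge I_n(0)=0$, which is \eqref{swlsc-1}. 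Finally, the ``in particular'' clause is immediate: when $I_n\ge0$ on $W^{1,n}_0(Q;\R^n)$ one has $I_n(w_k)\ge0$ for every such sequence, so \eqref{swlsc-1} holds trivially and sequential weak lower semicontinuity follows.

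The step I expect to be the main obstacle is the decoupling: cleanly separating the oscillation carried by $\tilde z_k$ from the concentration carried by $w_k$ requires simultaneously (i) controlling the ``bad set'' of vanishing measure coming from Lemma~\ref{fons} against the equi-integrable part through \eqref{n-Lipschitz} and the Vitali theorem, and (ii) performing the boundary modification that keeps $w_k$ inside $W^{1,n}_0(Q;\R^n)$ without destroying either property. Once this is in place, invoking \cite[Thm~3.1(i)]{kroemer} for the oscillation part and verifying the two implications is routine.
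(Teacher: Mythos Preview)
Your proposal is correct and follows essentially the same route as the paper: the argument preceding the proposition in the text is precisely the decomposition via Lemma~\ref{fons}, the boundary correction $\tilde z_k$ through \cite[Lemma~8.3]{Pedregal}, the decoupling using \eqref{n-Lipschitz}, and the application of \cite[Thm~3.1(i)]{kroemer} to the equi-integrable part. You are in fact slightly more explicit than the paper on two points---ensuring, after the correction, that the concentration part $w_k$ lies in $W^{1,n}_0(Q;\R^n)$, and flagging the boundedness needed for the converse implication---both of which are helpful clarifications rather than departures.
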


    
\section{Acknowledgement}
This work was supported by the Royal Society International Exchanges Grant IEES\ R3\ 193278. MK and JV are thankful to the Department of Mathematics of the University of Surrey for hospitality during their stays there. Likewise, JB is very grateful to \'{U}TIA of the Czech Academy of Sciences for hosting his visits.  All three authors thank Alexej Moskovka and Jonathan Deane for fruitful discussions.

\appendix
\section*{Appendix}
\renewcommand{\thesection}{A} 
It is important for the validity of the arguments leading to Proposition \ref{sum:4+} to establish that the conformal map $\psi$, referred to in \eqref{bc:h2}, exists.  This is the purpose of the following section.
\setcounter{equation}{0}
\subsection{Construction of the conformal map $\psi$}

Let $Q$ be the square in $\mathbb{C}$ with vertices at $0,1,1+i$ and $i$, and recall that $D$ stands for the unit disk in $\mathbb{C}$.   We construct here a conformal map $\phi: D \to Q$ such that the (unique) continuous extension $\tilde{\phi}: \bar{Q} \to \bar{D}$ possesses certain symmetries that are needed in the course of the proof of Proposition \ref{sum:4+}.  From now on let us write $\phi$ for both the map and its extension.  
 Then $\phi$ will be such that for each fixed $\theta$ the four points in the ordered set $\{e^{i\theta},-e^{-i\theta},-e^{i \theta},e^{-i \theta}\}$ are mapped to four points in the ordered set $\{\phi_1,\phi_2,\phi_3,\phi_4\} \subset \partial Q$ with the properties that 
\begin{enumerate} \item[(a)] $\phi_1$ and $\phi_2$ are mirror images of one another in the line $[1,i]$;
\item[(b)] $\phi_3$ and $\phi_4$ are mirror images of one another in the line $[1,i]$;
\item[(c)] $\phi_1$ and $\phi_4$ are mirror images of one another in the line $[0,1+i]$;
\item[(d)] $\phi_2$ and $\phi_3$ are mirror images of one another in the line $[0,1+i]$,
\end{enumerate}
as illustrated below.  
\begin{lemma}\label{map:psi} There exists a conformal map $\phi: D \to Q$ with symmetry properties (a)-(d) and whose unique homeomorphic extension, also denoted $\phi$, obeys $\phi(1)=1+i$, $\phi(i)=i$, $\phi(-1)=0$ and $\phi(-i)=1$, together with 
\begin{align}\label{imagepsi}
\begin{array}{r l}
\{\phi(e^{i\mu}): \ 0 \leq \mu \leq \pi/2\} \hspace{-3mm} & = [i+1,i],  \vspace{1mm}\\ 
\{\phi(e^{i\mu}): \ \pi/2 \leq \mu \leq \pi \}\hspace{-3mm} & = [i,0], \vspace{1mm}\\
\{\phi(e^{i\mu}): \ \pi  \leq \mu \leq 3\pi/2 \} \hspace{-3mm} & = [0,1], \vspace{1mm}\\
\{\phi(e^{i\mu}): \ 3\pi/2 \leq \mu \leq 2\pi \} \hspace{-3mm} & = [1,1+i].\vspace{1mm} 
\end{array}
\end{align}
\end{lemma}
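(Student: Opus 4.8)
The plan is to realise $\phi$ as a Schwarz--Christoffel map onto a square, followed by an affine correction, and then to read off every required symmetry from the manifest symmetries of the Schwarz--Christoffel integrand.

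First I would fix the holomorphic branch of $\zeta\mapsto(1-\zeta^4)^{-1/2}$ on $D$ normalised by the value $1$ at $\zeta=0$; this is legitimate since $1-\zeta^4$ omits $0$ on the simply connected set $D$. Set
$$\phi_*(z):=\int_0^z(1-\zeta^4)^{-1/2}\,d\zeta,$$
the standard Schwarz--Christoffel integral (exterior angle $\pi/2$ at each of the four prevertices $1,i,-1,-i$). By Schwarz--Christoffel theory $\phi_*$ is a conformal bijection of $D$ onto a square $S_*$ sending $1,i,-1,-i$ to its corners, and, since $|1-\zeta^4|^{-1/2}$ is integrable along $\partial D$, $\phi_*$ extends to a homeomorphism $\overline D\to\overline{S_*}$. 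Two symmetries are immediate from the formula: the substitution $\zeta=i\eta$ gives $\phi_*(iz)=i\phi_*(z)$, and the reality of the Taylor coefficients of $(1-\zeta^4)^{-1/2}$ gives $\phi_*(\overline z)=\overline{\phi_*(z)}$; combining them yields $\phi_*(-\overline z)=-\overline{\phi_*(z)}$. In particular $r:=\phi_*(1)=\int_0^1(1-t^4)^{-1/2}\,dt>0$ is real, and $\phi_*(i)=ir$, $\phi_*(-1)=-r$, $\phi_*(-i)=-ir$, so $S_*$ is the square with corners $\pm r,\pm ir$.

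Next I would correct $\phi_*$ by the affine map $z\mapsto Az+B$ with $A:=\tfrac{1+i}{2r}$ and $B:=\tfrac{1+i}{2}$, and put $\phi:=A\phi_*+B$. A short check shows $\phi$ is a conformal bijection $D\to Q$ (multiplication by $A$, which has argument $\pi/4$, turns the $45^\circ$ sides of $S_*$ into horizontal and vertical ones, and $B$ is the centre of $Q$), that it extends to a homeomorphism $\overline D\to\overline Q$, and that $\phi(1)=1+i$, $\phi(i)=i$, $\phi(-1)=0$, $\phi(-i)=1$. Then I would record the elementary facts that reflection of $\mathbb C$ in the diagonal $[0,1+i]$ is $p\mapsto i\overline p$ and reflection in the diagonal $[1,i]$ is $p\mapsto -i\overline p+(1+i)$, and verify, using $i\overline A=A$ and $i\overline B=B$, the two identities
$$\phi(\overline z)=i\,\overline{\phi(z)},\qquad \phi(-\overline z)=-i\,\overline{\phi(z)}+(1+i)\qquad(z\in D),$$
the first from $\phi_*(\overline z)=\overline{\phi_*(z)}$, the second from $\phi_*(-\overline z)=-\overline{\phi_*(z)}$. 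Writing $w=e^{i\theta}$ and $\phi_1=\phi(w)$, $\phi_2=\phi(-\overline w)$, $\phi_3=\phi(-w)$, $\phi_4=\phi(\overline w)$, properties (a)--(d) then follow by applying these two identities at $z=w$, $\overline w$, $-\overline w$: (a) is the second identity at $z=w$; (b) is it at $z=\overline w$ (since $\overline{\overline w}=w$); (c) is the first identity at $z=w$; (d) is the first at $z=-\overline w$.

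Finally, for \eqref{imagepsi} I would use that $\phi$, being a conformal bijection with homeomorphic boundary extension, carries the counterclockwise cyclic order $1,i,-1,-i$ on $\partial D$ to the counterclockwise cyclic order $1+i,i,0,1$ on $\partial Q$; since the only boundary arc of $Q$ joining two consecutive vertices without passing through a third is the edge between them, each quarter-arc of $\partial D$ maps onto the corresponding edge of $Q$, which is exactly \eqref{imagepsi}. I do not expect a genuinely hard step: the only points needing care are the bookkeeping of the branch of $(1-\zeta^4)^{-1/2}$, the verification that the affine normalisation lands exactly on $Q$ with the corners correctly labelled (it is easy to be off by a reflection or a quarter turn), and invoking Carath\'eodory's theorem for the homeomorphic boundary extension.
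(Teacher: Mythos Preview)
Your proof is correct and takes a genuinely different, more streamlined route than the paper.

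The paper constructs $\phi$ as a composition $g\circ v$, where $v:D\to H$ is the M\"obius map $v(z)=i\frac{1+z}{1-z}$ and $g:H\to Q$ is obtained by first conformally mapping the first quadrant onto a triangle and then reflecting, identifying the result with the half-plane Schwarz--Christoffel formula $G(z)=A\int_0^z\xi^{-1/2}(\xi-1)^{-1/2}(\xi+1)^{-1/2}\,d\xi$. The boundary correspondence \eqref{imagepsi} is then read off from separate tables for $v$ and $g$, and the symmetries (a)--(d) are established by computing the explicit real numbers $v_j=v(\pm e^{\pm i\theta})$, observing relations such as $v_1=-v_4$ and $v_1v_2=1$, and invoking the Schwarz reflection principle for $g$ across both the imaginary axis and the half-circle $C_1=\{|z|=1\}\cap\overline H$.

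Your approach instead uses the disk Schwarz--Christoffel integral $\phi_*(z)=\int_0^z(1-\zeta^4)^{-1/2}\,d\zeta$ directly, whose manifest symmetries $\phi_*(iz)=i\phi_*(z)$ and $\phi_*(\bar z)=\overline{\phi_*(z)}$ survive the affine correction as the clean identities $\phi(\bar z)=i\overline{\phi(z)}$ and $\phi(-\bar z)=-i\overline{\phi(z)}+(1+i)$. Properties (a)--(d) then fall out in one line each, with no reflection-principle argument needed, and \eqref{imagepsi} follows from a single orientation observation. This buys you a shorter and more self-contained proof in which the dihedral symmetry of the square is visible from the outset; the paper's construction, by contrast, makes the half-plane picture and the role of the diagonals $g(\alpha)=[0,1+i]$ and $g(C_1)=[1,i]$ more explicit, which may aid geometric intuition but at the cost of a longer verification.
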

\begin{proof} Let $H$ be the upper half-plane in $\C$, let $H_0$ be the first quadrant $\{z \in \C: \ \mathrm{Im}\,z >0, \ \mathrm{Re}\,z >0\}$, and let $T$ be the interior of the triangle with vertices at $0$, $1$ and $1+i$.   To start with, we follow \cite[Section 5.3]{Palka} and let $g_0: H_0 \to T$ be the unique conformal map whose homeomorphic extension to $\bar{H_0}$, again denoted $g_0$ for brevity, satisfies $g_0(0)=0$, $g_0(1)=1$ and $g_0(\infty)=1+i$.  Note that $g_0$ takes the imaginary axis in $H$, $\alpha$ say, to $[0,1+i]$.  Let $g:H \to Q$ be obtained by reflection in the imaginary axis, and note in particular that, because $g_0(1)=1$, it must be that $g(-1)=i$, which is the mirror image of $g_0(1)$ in $g_0(\alpha)$.   We are now in possession of the preimage under $g$ of each vertex of $Q$, and since the conformal map $G: H \to Q$ that is prescribed by the Schwarz-Christoffel formula (e.g., see \cite[Theorem 5.6]{Palka})
\begin{align}\label{form:SC} G(z)=A \int_0^z \xi^{-\frac{1}{2}} (\xi-1)^{-\frac{1}{2}}  (\xi+1)^{-\frac{1}{2}}  \, d\xi
\end{align}
agrees, for a suitable constant $A$, with $g$ at a triple of oriented points on the boundary of $H$, it follows by \cite[Theorem 4.12]{Palka} that $g=G$ in $H$.   It will shortly be helpful to note that if $[-\infty,a]$, $[a,b]$ and $[b,\infty]$ are understood as subintervals of $\R \cup \{\pm \infty\}$ embedded in $\tilde{\C}$ then
\begin{align} \label{imageg}  \begin{array}{r l} g([-\infty,-1]) \hspace{-3mm}& = [1+i,i], \vspace{1mm} \\ 
g([-1,0]) \hspace{-3mm}& =[i,0], \vspace{1mm} \\ 
g([0,1]) \hspace{-3mm}& = [0,1], \ \mathrm{and}  \vspace{1mm} \\ \vspace{1mm}
g([1,\infty])\hspace{-3mm} & = [1,1+i].\end{array}
\end{align}

Let $C_1=\{z \in \bar{H}: \ |z|=1\}$.  We claim that $g(C_1)=[1,i]$.  To see it, we appeal directly to \eqref{form:SC}, which, after a short calculation, shows that if $z=e^{i\mu}$ with $0 \leq \mu \leq \pi$, 
\begin{align*}
G(e^{i \mu})= (i-1)\frac{K(\mu)}{K(\pi)} + 1
\end{align*} 
where $K(\mu)$ is real, finite and $K(0)=0$.   The claim is immediate. 

Now define $v: D \to H$ by $v(z)=i\left(\frac{1+z}{1-z}\right)$ and note the following easily-verified facts about $v$:
\begin{align}\label{imagev}\begin{array}{r  l} \{v(e^{i\mu}): \ 0 \leq \mu \leq \pi/2\} \hspace{-3mm} & = [-\infty,-1], \vspace{1mm} \\
\{v(e^{i\mu}): \ \pi/2 \leq \mu \leq \pi \} \hspace{-3mm}& = [-1,0],  \vspace{1mm} \\
\{v(e^{i\mu}): \ \pi  \leq \mu \leq 3\pi/2 \}  \hspace{-3mm}& = [0,1], \vspace{1mm} \\
\{v(e^{i\mu}): \ 3\pi/2 \leq \mu \leq 2\pi \} \hspace{-3mm}& = [1,\infty], \vspace{1mm} \\
v([-i,i]) \hspace{-3mm} & = C_1,
\end{array}
\end{align}
where, in the first four lines, the convention of \eqref{imageg} concerning image sets remains in force.

Let $\phi: D \to Q$ be given by  
\begin{align}\label{def:psi}\phi:=g \circ v,\end{align}
and note that by combining \eqref{imageg} with \eqref{imagev} we obtain \eqref{imagepsi}.  See Figure \ref{Byrd2}.
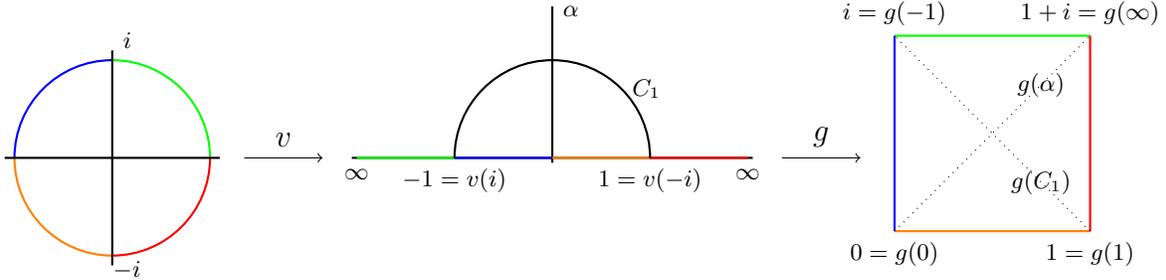
\begin{figure}[ht]
\centering
\begin{minipage}{\textwidth}
\centering
\begin{tikzpicture}[scale=1.3]

\begin{scope}[shift={(-3,0.0)}]
\def\ac45{0.707106}
\node (M) at ( 3.0, 0.0) {}; 
\draw[thick,green] (4,0) arc[start angle=0,end angle=90,radius=1];
\draw[thick,blue] (3,1) arc[start angle=90,end angle=180,radius=1];
\draw[thick,orange]  (2,0) arc[start angle=180,end angle=270,radius=1];
\draw[thick,red]  (3,-1) arc[start angle=270,end angle=360,radius=1];
\node (A) at  (-\ac45+3,-\ac45) {};
\node (A') at  (-\ac45+3,\ac45) {};
\node (B) at (\ac45+3,-\ac45) {};
\node (B') at (\ac45+3,\ac45) {};
\draw[thick] (-1.1+3,0) -- (1.1+3,0);
\draw[thick] (0+3,-1.1) -- (0+3,1.1);
\node[right] (U) at  (3,1.2) {\scriptsize $i$};
\node[right] (U) at  (2.9,-1.15) {\scriptsize $-i$};

\end{scope}

\begin{scope}[shift={(0.25,0)}]
\draw[->] (1.1,0) -- node[above] {$v$} (1.9,0);
\end{scope}

\begin{scope}[shift={(4.5,0)}]
\def\ac45{0.707106}
\node (M) at ( 0.0, 0.0) {}; 
\node (A) at  (-\ac45,-\ac45) {};
\node (A') at  (-\ac45,\ac45) {};
\node (B) at (\ac45,-\ac45) {};
\node (B') at (\ac45,\ac45) {};
\draw[thick] (-2.05,0) -- (2.05,0);
\draw[thick] (0,-0.05) -- (0,1.55);
\draw[thick,green] (-2,0) -- (-1,0);
\draw[thick,blue] (-1,0) -- (0,0);
\draw[thick,orange] (0,0) -- (1,0);
\draw[thick,red] (1,0) -- (2,0);

\node[right] (U) at  (0,1.5) {\scriptsize $\alpha$};
\node[right] (U) at  (\ac45,\ac45) {\scriptsize $C_1$};
\node[above] (U) at  (0,1) {};
\node[below] (U) at  (0,-1) {};
\node[below] (U) at  (-2,0) {\scriptsize $\infty$};
\node[below] (U) at  (-1,0) {\scriptsize $-1=v(i)$};
\node[below] (U) at  (1,0) {\scriptsize $1=v(-i)$};
\node[below] (U) at  (2,0) {\scriptsize $\infty$};
\draw[thick,black]  (1,0) arc[start angle=0,end angle=180,radius=1];
\end{scope}

\begin{scope}[shift={(2.75,0)}]
\draw[->] (4.1,0) -- node[above] {$g$} (4.9,0);
\end{scope}

\begin{scope}[shift={(8,0.25)}]
\draw[thick,green] (2,1) -- (0,1); 
\draw[thick,blue] (0,1) -- (0,-1); 
\draw[thick,orange] (0,-1) -- (2,-1); 
\draw[thick,red] (2,-1) -- (2,1); 
\node[below] (U) at  (0,-1) {\scriptsize $0=g(0)$};
\node[below] (U) at  (2,-1) {\scriptsize $1=g(1)$};
\node[above] (U) at  (0,1) {\scriptsize $i=g(-1)$};
\node[above] (U) at  (2,1) {\scriptsize $1+i=g(\infty)$};

\draw[dotted] (0,-1) -- (2,1);
\draw[dotted] (2,-1) -- (0,1);
\node[] (U) at  (1.5,0.5) {\scriptsize $g(\alpha)$};
\node[] (U) at  (1.5,-0.5) {\scriptsize $g(C_1)$};

\end{scope}
\end{tikzpicture}
\end{minipage}
\caption{This colour-coded figure records the effect of the conformal map $\phi:=g \circ v$ on the boundary of the unit circle in $\C$.  It can be checked that $[-i,i] \overset{v}{\to} C_1  \overset{g}{\to} [1,i]$, which is one of the diagonals of $Q$, and that $[-1,1)  \overset{v}{\to} \alpha \overset{g}{\to} [0,1+i]$, the other diagonal.  These observations become important when establishing the various symmetries of $\phi$ in Proposition \ref{confexist}.}\label{Byrd2}
\end{figure}
   It only remains to prove the symmetry properties (a)-(d) in the statement of the lemma, and for this it helps to recall the convention set out earlier that if $0 \leq \theta \leq \pi/2$ then $\phi_1=\phi(e^{i \theta})$, $\phi_2=\phi(-e^{-i\theta})$, $\phi_3=\phi(-e^{i \theta})$ and $\phi_4=\phi(e^{-i \theta})$.  By direct calculation, we find that
\begin{align}\label{imagev14}\begin{array}{r l} 
v_1 \hspace{-3mm} &:=v(e^{i\theta})  = \frac{-\sin \theta}{1-\cos \theta},  \vspace{1mm} \\
v_2\hspace{-3mm}&:=v(-e^{-i\theta}) = \frac{-\sin \theta}{1+\cos \theta},  \vspace{1mm} \\
v_3\hspace{-3mm}&:=v(-e^{i\theta}) = \frac{\sin \theta}{1+\cos \theta},  \vspace{1mm} \\
v_4\hspace{-3mm}&:=v(e^{-i\theta}) = \frac{\sin \theta}{1-\cos \theta}.  \vspace{1mm} \\
\end{array}
\end{align}
Since for $\theta >0$ all $v_j$ are real, and $v_1=-v_4$ and $v_2=-v_3$, it must be that $\phi_1=g(v_1)$ and $\phi_4=g(v_4)$ are mirror images of one another in $g(\alpha)=[0,1+i]$.  The same is true for $\phi_2=g(v_2)$ and $\phi_3=g(v_3)$.   The case $\theta=0$ corresponds to $v_1=\infty$ and $v_2=\infty$ in $\tilde{\C}$, and by construction we have $g(\infty)=1+i$.  Hence in all cases symmetries (c) and (d) hold.

To prove symmetries (a) and (b), note from \eqref{imagev14} that, for $\theta >0$, $v_1 v_3 =-1$ and $v_2 v_4 = -1$.  But $v_3=-v_2$, so $v_1 v_2=1$ and $v_4 v_3=1$, from which it is apparent that $v_1$ and $v_2$ are symmetric\footnote{See e.g. \cite[IX Section 2.6] {Palka}} with respect to the part circle $C_1$, and that the same is true of $v_3$ and $v_4$.   Consider the restriction $g_{\arrowvert_{\scriptscriptstyle{D \cap H}}}$ of $g$ to the interior of $C_1$ in $H$, which is the same as the set $D \cap H$.   We can extend $g_{\arrowvert_{\scriptscriptstyle{D \cap H}}}$ using the Schwarz reflection principle (where the reflection is in the part circle $C_1$) into $H\setminus D$, yielding a function $g^*$, say, that is holomorphic in $H$ and which agrees with $g$ on the open set $D \cap H$.  It must therefore be that $g^*=g$ in $H$.   Since $v_1$ and $v_2$ are symmetric with respect to $C_1$ it follows that $g^*(v_1)$ and $g^*(v_2)$ are symmetric\footnote{Strictly, one ought to say symmetric with respect to a circline in $\C$ rather than part of a line.  But by producing $[1,i]$ at both ends, one obtains such a circline.} with respect to $g^*(C_1)=g(C_1)=[1,i]$, using the claim proved after \eqref{imageg}.  Being symmetric with respect to $[1,i]$
in this setting means precisely that $\phi_1=g(v_1)$ and $\phi_2=g(v_2)$ are mirror images of one another in $[1,i]$.  If $\theta=0$ then $v_1=\infty$ and $v_2=0$, and then $\phi_1=g(v_1)=g(\infty)=1+i$ and $g(v_2)=g(0)=0$ are again mirror images in $[1,i]$.  Thus part (a) in the statement of the lemma, and a similar argument leads to (b).  \end{proof}

Recall the definition of the boundary condition $F'_2$ given in \eqref{bc:F2primereal}.  It should be clear that the support of $F_2'$ is contained in $[0,i] \cup [1,1+i]$.  It is also apparent from Figure \ref{Byrd2} that $\phi$ is such that 
\begin{align*}{\phi}^{-1}([0,i]) & \subset \partial D \cap \{z \in \C: \ \mathrm{Re} z < 0, \ \mathrm{Im} z >0\} \\
{\phi}^{-1}([1,1+i]) & \subset \partial D \cap \{z \in \C: \ \mathrm{Re} >  0, \ \mathrm{Im} z < 0\}.
\end{align*}
In particular, the boundary condition $h'(\zeta)=F'_2(\phi_0(\zeta))$, $\zeta \in \partial D$, does not satisfy the condition that if $h(\theta):=h'(e^{i\theta})$ for $0 \leq \theta \leq \pi$ then $\mathrm{spt}\,(h) \subset [\pi/4,3\pi/4]$. But this is easily remedied by replacing $\phi(\zeta)$ with $\phi(e^{i\pi/4} \zeta)$ in the above.   

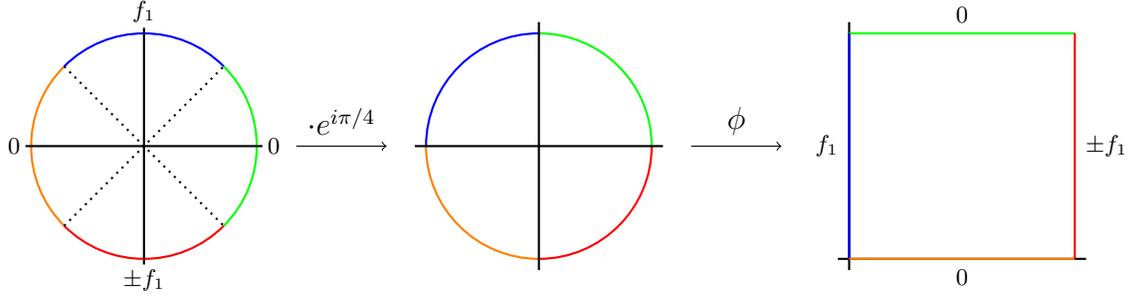
\begin{figure}[ht]
\centering
\begin{minipage}{\textwidth}
\centering
\begin{tikzpicture}[scale=1.5]

\begin{scope}[shift={(0,0)}]
\def\ac45{0.707106}
\node (M) at ( 0.0, 0.0) {}; 
\draw[thick,green] (\ac45,-\ac45) arc[start angle=-45,end angle=45,radius=1];
\draw[thick,blue] (\ac45,\ac45) arc[start angle=45,end angle=135,radius=1];
\draw[thick,orange]  (-\ac45,\ac45) arc[start angle=135,end angle=225,radius=1];
\draw[thick,red]  (-\ac45,-\ac45) arc[start angle=225,end angle=315,radius=1];
\node (A) at  (-\ac45,-\ac45) {};
\node (A') at  (-\ac45,\ac45) {};
\node (B) at (\ac45,-\ac45) {};
\node (B') at (\ac45,\ac45) {};
\draw[thick,dotted] (A.center) -- (B'.center);
\draw[thick,dotted] (A'.center) -- (B.center);
\draw[thick] (-1.05,0) -- (1.05,0);
\draw[thick] (0,-1.05) -- (0,1.05);
\node[above] (U) at  (0,1) {\scriptsize $f_1$};
\node[below] (U) at  (0,-1) {\scriptsize $\pm f_1$};
\node[left] (U) at  (-1,0) {\scriptsize $0$};
\node[right] (U) at  (1,0) {\scriptsize $0$};
\end{scope}

\begin{scope}[shift={(0.25,0)}]
\draw[->] (1.1,0) -- node[above] {$\cdot e^{i\pi/4}$} (1.9,0);
\end{scope}

\begin{scope}[shift={(0.5,0)}]
\def\ac45{0.707106}
\node (M) at ( 3.0, 0.0) {}; 
\draw[thick,green] (4,0) arc[start angle=0,end angle=90,radius=1];
\draw[thick,blue] (3,1) arc[start angle=90,end angle=180,radius=1];
\draw[thick,orange]  (2,0) arc[start angle=180,end angle=270,radius=1];
\draw[thick,red]  (3,-1) arc[start angle=270,end angle=360,radius=1];
\node (A) at  (-\ac45+3,-\ac45) {};
\node (A') at  (-\ac45+3,\ac45) {};
\node (B) at (\ac45+3,-\ac45) {};
\node (B') at (\ac45+3,\ac45) {};
\draw[thick] (-1.1+3,0) -- (1.1+3,0);
\draw[thick] (0+3,-1.1) -- (0+3,1.1);
\end{scope}

\begin{scope}[shift={(0.75,0)}]
\draw[->] (4.1,0) -- node[above] {$\phi$} (4.9,0);
\end{scope}

\begin{scope}[shift={(0.75,0)}]
\draw[thick,black] (5.4,-1) -- (7.6,-1); 
\draw[thick,black] (5.5,-1.1) -- (5.5,1.1); 
\draw[thick,green] (7.5,1) -- (5.5,1); 
\draw[thick,blue] (5.5,1) -- (5.5,-1); 
\draw[thick,orange] (5.5,-1) -- (7.5,-1); 
\draw[thick,red] (7.5,-1) -- (7.5,1); 
\node[left] (U) at  (5.5,0) {\scriptsize $f_1$};
\node[right] (U) at  (7.5,0) {\scriptsize $\pm f_1$};
\node[above] (U) at  (6.5,1) {\scriptsize $0$};
\node[below] (U) at  (6.5,-1) {\scriptsize $0$};
\end{scope}
\end{tikzpicture}
\end{minipage}
\caption{This is a cartoon-like sketch of the boundary conditions imposed on $\partial D$ by `pulling back', via $\psi$, the boundary conditions indicated on $Q$ in the rightmost part of the figure. }\label{Byrd3}
\end{figure}

\begin{definition}\label{finalpsi}(The map $\psi$.) Let $\phi: D \to Q$ be the map constructed in Lemma \ref{map:psi}.  Then define $\psi: D \to Q$ by $$\psi(\zeta) = \phi(e^{i\pi/4} \zeta), \quad \quad \quad \zeta \in D.$$
\end{definition}

To illustrate why symmetries (a)-(d) are needed, consider the maps $F_2': \partial Q \to \C$ and $h': \partial D \to \C$ defined in \eqref{bc:F2primereal} and \eqref{bc:h2} respectively.   We wish $h'$ to have the property that $h'(\zeta)=-h'(-\zeta)$ for all $\zeta \in \partial D$.   For the sake of argument, let $\zeta \in \partial D$ lie in the first quadrant so that  $\eta \in \partial D$ defined by $\eta:=e^{i\pi/4}\zeta$ lies in the fourth quadrant.   Then we may find $\theta \in [0,\pi/2]$ such that $\eta=e^{-i \theta}$. By Lemma \ref{map:psi}, $\phi_4=\phi(e^{-i\theta})$ then belongs to $[1,1+i] \subset \partial Q$.   Let $$\phi_4=1+x_2 i $$ where, without loss of generality, $x_2 \geq 1/2$.  Write $\phi_2=\phi(-e^{-i \theta}),$ which belongs to the line $[0,i]$, as $$\phi_2=(1-y_2) i,$$ and define $$\phi'_2=y_2i$$
to be the mirror image of $\phi_2$ in the line $[i/1, 1+i/2]$. 
We claim that 
\begin{enumerate}\item[(i)] $F'_2(\phi_4)=-F'_2(\phi'_2)$, and hence that 
\item[(ii)] $h'(\zeta)=-h'(-\zeta)$.
\end{enumerate}

To see (i), first observe from \eqref{bc:h2} that $h'(\zeta)=F'_2(\psi(\zeta))=F'_2(\phi(e^{i\pi/4}\zeta))=F'_2(\phi(\eta))=F'_2(\phi_4)$.
Then note that by symmetries (c) and (a) above it must be that $\mathrm{dist} (\phi_4,1+i)=\mathrm{dist}(\phi_2,0)$. See Fig \ref{Byrd4}.  
\begin{figure}[ht]
\centering
\begin{minipage}{1\textwidth}
\centering
\begin{tikzpicture}[scale=1.4]
\node (M) at ( 0.0, 0.0) {}; 
\draw (M.center) circle (1.0cm);

\node[left] (A) at  (-0.86,-1/2) {\scriptsize $(3)  -\!e^{i \theta} $\,};
\node[left] (A') at  (-0.86,1/2) {\scriptsize $(2)  -\! e^{-i \theta}$};
\node[right] (B) at  (0.86,-1/2) {\scriptsize $\, e^{-i \theta} \, (4)$};
\node[right] (B') at  (0.86,1/2) {\scriptsize $\, e^{i \theta} \, (1)$};
\draw[thick, dotted] (A) -- (B');
\draw[thick, dotted] (A') -- (B);
\draw[thick] (-1.2,0) -- (1.2,0);
\draw[thick] (0,-1.2) -- (0,1.2);

\draw[->] (2.8,0) -- node[above] {$\phi$} (3.8,0);

\draw (5,-1) rectangle (7,1); 
\node[below] (C) at (5,-1) {\scriptsize$0$};
\node[left] (C') at (5,1) {\scriptsize$i$};
\node[right] (D') at (7,1) {\scriptsize$1+i$};
\node[below] (D) at (7,-1) {\scriptsize$1$};
\draw[thick, dotted] (5.5,-1) node[below] {\scriptsize$\phi_3$} -- (7,0.5) node[right] {\scriptsize$\phi_4$} -- (6.5,1) node[above] {\scriptsize$\phi_1$} -- (5,-0.5) node[left] {\scriptsize$\phi_2$} -- cycle;
\node[left] at (5,0.5) {\scriptsize$\phi_2'$};
\node at (5,0.5)[circle,fill,inner sep=1pt]{};
\draw[thick, dotted] (5,-1) -- (7,1);
\draw[thick, dotted] (7,-1) -- (5,1);

\end{tikzpicture}
\end{minipage}
\caption{This illustrates (some of) the symmetries of the map $\phi$ that are needed to establish facts about the boundary conditions $h'$ and $h''$ as applied on $\partial D$ in problems of $\ominus-$ and $\oplus-$type.  The convention introduced earlier remains in force, namely that $\phi_1=\phi(e^{i \theta})$, $\phi_2=\phi(-e^{-i\theta})$, $\phi_3=\phi(-e^{i \theta})$ and $\phi_4=\phi(e^{-i \theta})$.  See the discussion that precedes Proposition \ref{confexist} for the details. }\label{Byrd4}
\end{figure}
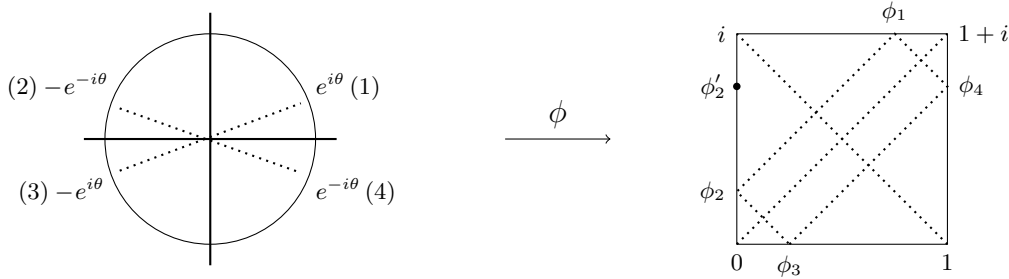

Hence $x_2=y_2$, and so by applying the definition of $F_2'$ given in \eqref{bc:F2primereal} we see that
$$F_2'(\phi_4) = F_2'(1+x_2i)=-f_1(x_2)$$
and
$$F_2'(\phi_2')=F_2'(y_2i) = f_1(y_2)=f_1(x_2).$$ 
This is (i).  To see (ii), note first that $h'(-\zeta)=F_2'(\psi(-\zeta))=F'_2(\phi(-\eta))=F_2'(\phi_2)$ and recall that $h'(\zeta)=F_2'(\phi_4)$.  Using the property that $F_2'(\phi_2)=F_2'(\phi'_2)$ in the first equation and then applying (i), we have $h'(-\zeta)=-h'(\zeta)$, which is (ii).   Thus we are led to the following result.

\begin{proposition}\label{confexist}Let $\psi: D \to Q$ be the conformal map in Definition \ref{finalpsi} and let the maps $h'$ and $h''$ be given by 
\begin{align*} h'(\zeta) & = F'_2(\psi(\zeta)),\\  
h''(\zeta) & = F'_1(\psi(\zeta)), \end{align*}
where $\zeta \in \partial D$, $F'_2$ is given by \eqref{bc:F2primereal} and
\begin{align} \label{bc:F1primereal}
F_1'(z)=\left\{\begin{array}{l l l} f_1(x_2+\frac{1}{2}) & \mathrm{if } & z=x_2 i \in [0,i] \\ 
0 &  \mathrm{if } & z \in [0,1] \cup [i,1+i] \\
 f_1(x_2+\frac{1}{2}) & \mathrm{if } & z=1+x_2i \in [1,1+i]. 
  \end{array}\right. \end{align}
Then $h'(\zeta)=-h'(-\zeta)$ and $h''(\zeta)=h''(-\zeta)$ for all $\zeta \in \partial D$.  Moreover, 
each component of the maps $h'(\theta):=h'(e^{i \theta})$ and $h''(\theta):=h''(e^{i \theta})$, where $0 \leq \theta \leq \pi$, obeys conditions (H1) and (H2). 
\end{proposition}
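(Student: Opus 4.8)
The plan is to verify the two symmetry identities $h'(\zeta)=-h'(-\zeta)$ and $h''(\zeta)=h''(-\zeta)$ by tracking how $\psi$ and the boundary data $F_1', F_2'$ interact, and then to translate these into the statements (H1), (H2) for the angular functions. Recall $\psi(\zeta)=\phi(e^{i\pi/4}\zeta)$, so for $\zeta\in\partial D$ in the first quadrant I write $\eta=e^{i\pi/4}\zeta\in\{e^{-i\theta}:0\le\theta\le\pi/2\}$ and use the labelling $\phi_1=\phi(e^{i\theta})$, $\phi_2=\phi(-e^{-i\theta})$, $\phi_3=\phi(-e^{i\theta})$, $\phi_4=\phi(e^{-i\theta})$ from Lemma \ref{map:psi}. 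The identity $h'(-\zeta)=F_2'(\psi(-\zeta))=F_2'(\phi(-\eta))=F_2'(\phi_2)$ versus $h'(\zeta)=F_2'(\phi_4)$ is exactly the content already worked out in the discussion preceding the proposition: symmetries (c) and (a) of $\phi$ force $\mathrm{dist}(\phi_4,1+i)=\mathrm{dist}(\phi_2,0)$, and plugging into the piecewise definition \eqref{bc:F2primereal} yields $F_2'(\phi_4)=-f_1(x_2)$ and $F_2'(\phi_2)=F_2'(\phi_2')=f_1(x_2)$. So $h'(-\zeta)=-h'(\zeta)$. I would present the $h''$ case in parallel: the only change is that $F_1'$ in \eqref{bc:F1primereal} carries the value $+f_1$ on *both* $[0,i]$ and $[1,1+i]$ (no sign flip), so the same geometry gives $F_1'(\phi_4)=F_1'(\phi_2)$, i.e. $h''(-\zeta)=h''(\zeta)$. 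One should also dispatch the remaining position of $\zeta$ on $\partial D$ (the other three quadrants) by the same book-keeping, or simply observe that once the identity is checked for $\zeta$ in an arc of length $\pi/2$ it propagates, since $\zeta\mapsto-\zeta$ is an involution swapping the relevant arcs; and the degenerate cases $\theta=0$ ($v_1=\infty$, $\phi_1=1+i$) were already handled in Lemma \ref{map:psi}.

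Next I would deduce (H1) and (H2) for each component of $h'(\theta):=h'(e^{i\theta})$ and $h''(\theta):=h''(e^{i\theta})$. Condition (H2), $h_j(\theta)=h_j(\pi-\theta)$, follows because $e^{i(\pi-\theta)}=-e^{-i\theta}$, so $h'(e^{i(\pi-\theta)})=h'(-\,\overline{e^{i\theta}})$; using that $F_2'$ restricted to the two vertical sides $[0,i]$ and $[1,1+i]$ depends only on the imaginary part (hence is unchanged under the reflection realised by $\phi$ together with symmetry (a)/(c)), one gets $h'(e^{i(\pi-\theta)})=\pm h'(e^{i\theta})$ with the correct overall sign, and after combining with the oddness/evenness just proved the net effect is precisely (H2). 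For (H1), I need $\mathrm{spt}\,h\subset[\pi/4,3\pi/4]$ and $h\in H^1$. The support statement is the reason $\psi$ was defined with the extra rotation $e^{i\pi/4}$: by Lemma \ref{map:psi} and the inclusions recorded just before Definition \ref{finalpsi}, $\phi^{-1}([0,i])$ and $\phi^{-1}([1,1+i])$ lie in the second and fourth (open) quadrants of $\partial D$, so after composing with $\zeta\mapsto e^{i\pi/4}\zeta$ the preimages of the two sides where $F_2'$ (resp. $F_1'$) is nonzero land inside the arc $\{e^{i\theta}:\pi/4\le\theta\le 3\pi/4\}$ and its reflection; since $F_2'=F_1'=0$ on $[0,1]\cup[i,1+i]$, $h$ vanishes off that arc. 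The $H^1$ regularity follows since $f_1$ is (the restriction of a harmonic function and hence) smooth, $F_j'$ is piecewise smooth with matching zero boundary values at the corners of $Q$ where the pieces meet, and $\psi$ extends to a $C^1$ (indeed analytic up to the boundary except at the four preimages of the corners, where the Schwarz–Christoffel exponents $-1/2$ give an integrable, $H^1$-compatible blow-up of $\psi'$) homeomorphism of closures; composition then preserves the $H^1$ class on the circle.

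The main obstacle I anticipate is not any single hard estimate but the careful sign and reflection bookkeeping: one must be scrupulous about which of symmetries (a)–(d) is invoked at each step, about the orientation of the four arcs under $v$ and then $g$, and about the fact that $F_2'$ flips sign between $[0,i]$ and $[1,1+i]$ while $F_1'$ does not — this is exactly what produces an odd $h'$ and an even $h''$. A secondary subtlety is the regularity of $h$ at the four corner-preimages on $\partial D$: one should note that the traces of $F_1'$ and $F_2'$ from the two adjacent sides agree (both equal $0$ at the corners $0,1,i,1+i$, using $f_1(1)=0$ which holds because $f_1$ is the trace of a map vanishing on the horizontal parts of $\partial R_1$), so no jump is introduced, and the mild Schwarz–Christoffel singularity of $\psi$ does not destroy membership in $H^{1}$ of the unit circle. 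With these points nailed down, (i) and (ii) together with (H1), (H2) follow as stated.
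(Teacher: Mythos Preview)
Your proposal follows the paper's approach closely: the argument for $h'(\zeta)=-h'(-\zeta)$ is exactly the one given in the discussion preceding the proposition, the extension to $h''$ is the obvious parallel, and the support claim in (H1) is correctly deduced from the pre-image calculation recorded just before Definition \ref{finalpsi}. The $H^1$-regularity remark is a reasonable addition.

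The genuine gap is your argument for (H2). You invoke symmetries (a) and (c), but those are reflections in the \emph{diagonals} $[0,1+i]$ and $[1,i]$ of $Q$; they swap vertical and horizontal sides and therefore cannot relate values of $F_2'$ (or $F_1'$) at two points of the \emph{same} vertical side. Concretely, (H2) asks for $F_2'(\psi(\zeta))=F_2'(\psi(-\bar\zeta))$. Translated through the $e^{i\pi/4}$-twist, properties (a)--(d) give $\psi(\pm i\bar\zeta)$ in terms of $\psi(\zeta)$ (the two diagonal reflections) and hence their composite $\psi(-\zeta)=1+i-\psi(\zeta)$, but they do \emph{not} determine $\psi(-\bar\zeta)$ in terms of $\psi(\zeta)$. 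To reach that, one needs an additional midline symmetry, for instance $\psi(\bar\zeta)=1-\overline{\psi(\zeta)}$, which is not part of (a)--(d) but follows from the full $D_4$ symmetry of the Schwarz--Christoffel map; combining it with the $\pi$-rotation gives $\psi(-\bar\zeta)=i+\overline{\psi(\zeta)}$, i.e.\ reflection in the horizontal midline of $Q$. Then (H2) becomes $F_2'(z)=F_2'(i+\bar z)$, which is precisely the evenness of the trace $f_1$ about its midpoint. That evenness is not verified anywhere --- in fact the paper's own step ``$F_2'(\phi_2)=F_2'(\phi_2')$'' in claim (ii) already uses it without justification. So your (H2) sketch does not work as written: you should either supply the missing midline symmetry of $\psi$ together with an argument (or WLOG reduction) allowing $f_1$ to be taken symmetric, or note that (H2) is not actually used in the proof of Proposition \ref{comphpm} (only the support condition from (H1) is) and can be dropped.
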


\end{document}